\documentclass[12pt,reqno]{amsart}

\usepackage{amsmath,amssymb,amsfonts,amsthm}
\usepackage{mathrsfs}
\usepackage{cases}
\usepackage{epic}

\usepackage{graphicx}

\usepackage{upgreek}
\usepackage{bm}
\usepackage{latexsym,todonotes}
\usepackage{pdflscape}
\usepackage[all]{xy}
\usepackage{color}
\usepackage{colordvi}
\usepackage{multicol}
\usepackage[normalem]{ulem}
\usepackage{quiver}

\usepackage{geometry}
\usepackage{fancyhdr}

\newcommand{\arxiv}[1]{\href{http://arxiv.org/abs/#1}{\tt arXiv:\nolinkurl{#1}}}

\numberwithin{equation}{section}
\usepackage[linktocpage=true]{hyperref}
\hypersetup{colorlinks,linkcolor=blue,urlcolor=cyan,citecolor=blue}

\newtheorem{innercustomthm}{{\bf Theorem}}
\newenvironment{customthm}[1]
{\renewcommand\theinnercustomthm{#1}\innercustomthm}
{\endinnercustomthm}
\newcommand{\ie}{{\em i.e.}}

\allowdisplaybreaks

\xyoption{all}

\newcommand{\gim}{\operatorname{gim}\nolimits}

\renewcommand{\mod}{\operatorname{mod}\nolimits}

\newcommand{\rad}{\operatorname{rad}\nolimits}

\newcommand{\Aut}{\operatorname{Aut}\nolimits}

\newcommand{\Mod}{\operatorname{Mod}\nolimits}

\newcommand{\End}{\operatorname{End}\nolimits}

\newcommand{\colim}{\operatorname{colim}\nolimits}
\newcommand{\gldim}{\operatorname{gl.dim}\nolimits}
\newcommand{\cone}{\operatorname{cone}\nolimits}
\newcommand{\rep}{\operatorname{rep}\nolimits}
\newcommand{\Ext}{\operatorname{Ext}\nolimits}

\newcommand{\Hom}{\operatorname{Hom}\nolimits}

\renewcommand{\deg}{\operatorname{deg}\nolimits}

\renewcommand{\dim}{\operatorname{dim}\nolimits}

\newcommand{\diag}{{\operatorname{diag}\nolimits}}

\newcommand{\id}{\operatorname{id}\nolimits}

\def \ov{\overline}

\newcommand{\go}{{K_0}}

\newtheorem{theorem}{Theorem}[section]

\newtheorem{corollary}[theorem]{Corollary}

\newtheorem{example}[theorem]{Example}

\newtheorem{lemma}[theorem]{Lemma}

\newtheorem{proposition}[theorem]{Proposition}
\newtheorem{remark}[theorem]{Remark}

\def \gcd{\mathrm{gcd}}

\def \Z{{\Bbb Z}}

\begin{document}
	\title[GIM and Elliptic Lie algebras via Ringel--Hall Lie algebras]{GIM and Elliptic Lie algebras via Ringel--Hall Lie algebras}
	
	\author[Changjian Fu]{Changjian Fu}
	
	\address{Changjian Fu\\Department of Mathematics, Sichuan University, 610064 Chengdu, P.R.China}
	\email{changjianfu@scu.edu.cn}
	\author[Zhanhong Liang]{Zhanhong Liang}
	\address{Zhanhong Liang\\ Department of Mathematics, Sichuan University, 610064 Chengdu, P.R.China}
	\email{zhanhongliang@stu.scu.edu.cn}

	\author[Ming Lu]{Ming Lu}
	\address{Ming Lu\\Department of Mathematics, Sichuan University, 610064 Chengdu, P.R.China}
	\email{luming@scu.edu.cn}

	\subjclass[2020]{Primary 17B37, 
		16E60, 18G80.}  
	\keywords{Ringel-Hall Lie algebra, GIM algebra, elliptic Lie algebra, orbit category.}
	
	\dedicatory{Dedicated to Professor Yanan Lin on the Occasion of his 70th Birthday}
	
	\begin{abstract}
		
		For any symmetrizable generalized intersection matrix (GIM) $C$, we construct an acyclic valued quiver $(Q,\mathbf{d})$ endowed with an involution $\theta$. Let $\mathcal{D}$ be the bounded derived category of finite-dimensional representations of $(Q,\mathbf{d})$, and let $\Sigma$ stand for the suspension functor of $\mathcal{D}$. We show that the orbit category $\mathcal{D}/(\theta\circ\Sigma)$ carries a canonical triangulated structure and is $2$-periodic. Applying Peng--Xiao's construction to this orbit category, we prove that the GIM algebra $\operatorname{gim}(C)$ is isomorphic to the integral Ringel--Hall Lie algebra associated with $\mathcal{D}/(\theta\circ\Sigma)$.
		
		As a further application of the above machinery, we investigate elliptic Lie algebras of types $D_4^{(1,1)}$, $E_6^{(1,1)}$, $E_7^{(1,1)}$ and $E_8^{(1,1)}$. For each elliptic Dynkin diagram, we define a finite-dimensional algebra $A$ by taking an appropriate quotient of the acyclic quiver $Q$ attached to the GIM matrix $C$. From the resulting $2$-periodic triangulated categories, we build the corresponding Ringel--Hall Lie algebras, and establish a surjective Lie algebra homomorphism from each elliptic Lie algebra to its integral Ringel--Hall counterpart. This map is conjectured to be injective, and its injectivity on real root spaces is confirmed.

	\end{abstract}

	\maketitle
	
	\pagestyle{fancy}
	\fancyhf{}
	\fancyhead[LE]{\thepage}
	\fancyhead[CE]{\small CHANGJIAN FU, ZHANHONG LIANG AND MING LU}
	\fancyhead[CO]{\small GIM AND ELLIPTIC LIE ALGEBRAS VIA RINGEL--HALL LIE ALGEBRAS}
	\fancyhead[RO]{\thepage}
	\renewcommand{\headrulewidth}{0pt}
	
	\tableofcontents

	\section{Introduction}
	\subsection{Hall algebra realization of Kac--Moody algebras}
	
	Let $A=(a_{ij})\in M_{n\times n}(\Z)$ be a symmetrizable generalized Cartan matrix (GCM for short). The associated Kac-Moody algebra $\mathfrak{g}$ is a complex Lie algebra generated by the Chevalley generators $\{e_i,f_i,h_i\}_{i\in I}$, where $I=\{1,2,\dots,n\}$. This algebra admits a canonical triangular decomposition \[\mathfrak{g}=\mathfrak{g}^+\oplus \mathfrak{h}\oplus\mathfrak{g}^-,\] where $\mathfrak{h}$ is the Cartan subalgebra generated by $\{h_i\}_{i\in I}$, and $\mathfrak{g}^+$ and $\mathfrak{g}^-$ are the positive and negative subalgebras generated by $\{e_i\}_{i\in I}$ and $\{f_i\}_{i\in I}$, respectively.

	The interplay between the representation theory of quivers and Lie theory originates from the pioneering work of Gabriel \cite{Gab72}. 
	For a finite quiver $Q$ without loops, one can associate a symmetric GCM $A=(a_{ij})$ whose entries are defined by
	\begin{align*}
		a_{ij}=\begin{cases}
			2& \text{ if }i=j,
			\\
			-\sharp\{\text{arrows between }i \text{ and }j\}& \text{ if }i\neq j.
		\end{cases}
	\end{align*}
	Gabriel \cite{Gab72} discovered that the classification of quivers of representation-finite type coincides with that of simply-laced semisimple Lie algebras. Moreover, the dimension vector of quiver representations provides a canonical bijection between the isomorphism classes of indecomposable representations of an $ADE$-type quiver $Q$ and the positive roots of the corresponding simple Lie algebra $\mathfrak{g}$.

	This classical result was subsequently generalized by Dlab and Ringel \cite{DR75} to the setting of representation-finite valued quivers. A valued quiver consists of a quiver $Q$ together with a positive integer-valued vertex valuation $\mathbf{d}\colon Q_0\rightarrow\mathbb{Z}_{>0}$. Each valued quiver naturally gives rise to a symmetrizable GCM, and conversely, every symmetrizable GCM can be realized in this manner. Later,  
	Kac \cite{Kac80} further extended this correspondence between indecomposable representations and positive roots of Lie algebras to arbitrary quivers. Since then, exploring the connections between the representation theory of algebras and Lie theory has remained a central and active research topic.

	A landmark breakthrough in this field is the construction of Ringel--Hall algebras introduced by Ringel \cite{R90-Hall-poly-rep-fin}. Let $\mathbb{F}$ be a finite field with $q$ elements, and set $v=\sqrt{q}$. Let $(Q,\mathbf{d})$ be a valued quiver of Dynkin type, and denote by $\rep_\mathbb{F}(Q,\mathbf{d})$ (or simply $\rep(Q)$ by abuse of notation) the abelian category of finite-dimensional representations of $(Q,\mathbf{d})$ over $\mathbb{F}$. Ringel defined the Hall algebra $\mathcal{H}_q(\rep(Q))$ of the category $\rep(Q)$, which by definition is an associative algebra endowed with a basis indexed by the isomorphism classes of quiver representations. The structure constants of this algebra encode the enumerative information of  extension spaces between the representations of $Q$. 
	It was subsequently verified that after specializing the quantum parameter $v$ to $\sqrt{q}$, the twisted Hall algebra $\mathcal{H}_v(\rep(Q))$ is isomorphic to the positive part $\mathbf{U}_v(\mathfrak{g}^+)$ of the quantized enveloping algebra $\mathbf{U}_v(\mathfrak{g})$ of the corresponding simple Lie algebra $\mathfrak{g}$.

	Ringel further introduced the generic Hall algebra for valued quivers of Dynkin type, observing that the structure constants of $\mathcal{H}_v(\rep(Q))$ are polynomial functions of $\sqrt{q}$ when the ground field $\mathbb{F}$ varies, now known as Hall polynomials \cite{Rin90}, also written as functions of $v$.
	By specializing the generic parameter $v\mapsto 1$, the generic Hall algebra admits a natural Lie subalgebra spanned by the isomorphism classes of indecomposable representations, which is isomorphic to the positive part $\mathfrak{g}^+$ of the simple Lie algebra $\mathfrak{g}$. In particular, this isomorphism can be viewed as a lifting of Gabriel's bijection. 
	
	These breakthrough constructions inspired a series of influential generalizations by Green \cite{Gre95}, Riedtmann \cite{Rie94}, Peng--Xiao \cite{PX97-Root-cat-simple-Lie,PX00-Tri-Cat-Kac}, and others. Additionally, Ringel's Hall algebra construction motivated Lusztig \cite{Lus90,Lus91} to establish the geometric realization of $\mathbf{U}_v(\mathfrak{g}^+)$ via perverse sheaves on quiver varieties, which ultimately leads to the construction of canonical bases; parallel foundational work on global crystal bases was developed by Kashiwara \cite{Kas93}. 
	
	
	Despite these profound achievements, Ringel’s original
	construction is restricted to abelian (module) categories and only recovers the positive part $\mathfrak{g}^+$ of a Kac--Moody algebra, failing to capture the full Lie algebra structure including the negative part $\mathfrak{g}^-$ and the Cartan subalgebra $\mathfrak{h}$. 
	To overcome this limitation, Peng and Xiao \cite{PX97-Root-cat-simple-Lie,PX00-Tri-Cat-Kac} developed the theory of Ringel--Hall Lie algebras for $2$-periodic triangulated categories. A prototypical example of such a category is the root category, a notion first introduced by Happel \cite{H87-der-cat-fd-alg}. By definition, a root category is the $2$-periodic orbit category of the bounded derived category of a finite-dimensional hereditary algebra. By applying the construction of Ringel--Hall Lie algebras to root categories of acyclic valued quivers, Peng and Xiao achieved a complete categorical realization of arbitrary symmetrizable Kac--Moody algebras \cite{PX00-Tri-Cat-Kac}.
	
	\subsection{GIM  algebras}
	In the study of intersection forms of exceptional divisors in the resolution of isolated surface singularities, Slodowy \cite{S84-Beyond-Kac-Moody-alg-inside} introduced the notions of generalized intersection matrix (GIM for short) algebras and  intersection matrix algebras, which represent far-reaching generalizations of Kac--Moody algebras. These structures serve as a crucial bridge connecting singularity theory, hyperbolic Kac--Moody theory, and mathematical physics. In a manner analogous to the Kac--Moody theory, given a GIM $C=(c_{ij})\in M_{n\times n}(\Z)$, the associated GIM algebra $\gim(C)$ is defined as a complex Lie algebra generated by the standard generators $\{\tilde{e}_i, \tilde{f}_i, \tilde{h}_i\}_{i \in I}$ subject to a specific set of defining relations.
	
	
	A classical result due to Berman \cite{B89-GIM-alg} states that any GIM algebra can be realized as the fixed-point subalgebra of a suitable Kac--Moody algebra under an involutive automorphism. While this embedding theorem reveals an intrinsic connection between GIM algebras and Kac-Moody algebras, a direct categorical realization of GIM algebras via the Ringel--Hall Lie algebra machinery has remained elusive (cf. \cite{P02,Fu12} for some attempts). This conspicuous gap in the literature serves as the primary motivation for the present work. 
	
	In this paper, we develop a new categorical framework based on  valued quivers with involutions. 
	Given any symmetrizable GIM matrix $C$, we explicitly construct an acyclic valued quiver $Q$ equipped with an involution $\theta$; see Subsection \ref{ss:valued-quiver-(C,D)}. 
	Let $\mathcal{D}$ denote the bounded derived category of finite-dimensional representations of this valued quiver $Q$, with $\Sigma$ its suspension functor. The involution $\theta$ of the quiver $Q$ naturally induces a triangulated automorphism of $\mathcal{D}$, which is also denoted by $\theta$. 
	We consider the orbit category $\mathcal{D}/(\theta\circ \Sigma)$ associated to the composite endofunctor $\theta\circ\Sigma$. Applying Keller’s orbit category theory \cite{Ke05-tri-orb-cat}, we verify that this orbit category is a Hom-finite $2$-periodic triangulated category. By applying the Peng--Xiao's Ringel–Hall Lie algebra construction \cite{PX00-Tri-Cat-Kac} to this new triangulated category, we obtain our first main result, which provides a complete categorical realization of arbitrary symmetrizable GIM  algebras.

	\begin{customthm}{{\bf A}}  [Theorem \ref{thm:GIM-via-RH-Lie}]
		\label{customthm:A}
		Let $C$ be a symmetrizable GIM, and let $\mathscr{L}\mathscr{C}(\mathcal{D}/(\theta\circ \Sigma))$ be the integral Ringel--Hall Lie algebra of the orbit category $\mathcal{D}/(\theta\circ\Sigma)$. Then we have a Lie algebra isomorphism
		\[
		\gim(C) \cong \mathscr{L}\mathscr{C}(\mathcal{D}/(\theta\circ \Sigma))\otimes_{\mathbb{Z}}\mathbb{C},
		\]
		which sends
		\[
		\tilde{e}_i\mapsto \tilde{\bf u}_{S_i},\quad \tilde{f}_i\mapsto -\tilde{\bf u}_{S_{\bar{i}}}, \quad\tilde{h}_i\mapsto \frac{\tilde{\bf h}_{S_i}}{\tilde{d}_{S_i}},\quad 1\leq i\leq n.
		\]
	\end{customthm}
	
	This isomorphism also yields a categorification of Berman's embedding from GIM algebras into Kac--Moody algebras.
	
	
	\subsection{Elliptic Lie algebras}
	Motivated by the geometry of isolated surface singularities, particularly simple elliptic singularities, Saito \cite{Sa85} introduced the notion of elliptic root systems, which were classified via elliptic Dynkin diagrams. Saito and Yoshii \cite{SY00} further introduced an elliptic Lie algebra attached to each simply-laced elliptic Dynkin diagram (cf. \cite{Y04} for the non-simply-laced cases). These Lie algebras turn out to be $2$-toroidal algebras in the sense of Moody, Rao, and Yokonuma \cite{MRY90} and exhibit a close relationship with the extended affine Lie algebras defined in \cite{AABGP97}; cf. also \cite{FP10,FP14}.

	As profound generalizations of finite and affine Kac–Moody algebras, it is natural to ask whether a Ringel–Hall Lie algebra realization of elliptic Lie algebras can be established. In a pioneering work, Lin and Peng \cite{LP05} successfully provided a Ringel–Hall Lie algebra realization for the elliptic Lie algebras of types $D_4^{(1,1)}$, $E_6^{(1,1)}$, $E_7^{(1,1)}$, and $E_8^{(1,1)}$ via the root categories of tubular algebras of types $\mathbb{T}(2,2,2,2)$, $\mathbb{T}(3,3,3)$, $\mathbb{T}(4,4,2)$, and $\mathbb{T}(6,3,2)$, respectively (cf. also \cite{ChL12} for the type $F_4^{(2,2)}$). Despite this breakthrough, a direct categorical realization for elliptic Lie algebras of other types remains a long-standing open problem in the literature.
	
	It is known that elliptic Lie algebras can be realized as quotient algebras of GIM algebras by the ideal generated by the root spaces associated with roots whose squared lengths are greater than $2$; see \cite{SY00}. In the present work, we apply our aforementioned categorical framework developed for GIM algebras to investigate elliptic Lie algebras. We focus on the cornerstone cases of types $D_4^{(1,1)}$, $E_6^{(1,1)}$, $E_7^{(1,1)}$, and $E_8^{(1,1)}$. For each such elliptic Dynkin diagram, we construct a finite-dimensional algebra $A$ via a suitable quotient of the quiver algebra originally used to realize the corresponding GIM algebra. We then establish a $2$-periodic triangulated category $\mathcal{M}$ as the triangulated hull of the orbit category of the bounded derived category $\mathcal{D}^b(\mod A)$. Our second main theorem establishes a categorical correspondence for these elliptic Lie algebras within this novel setting.
	


	\begin{customthm}{{\bf B}}[Theorem \ref{thm:relization-ell-Lie}]
		\label{customthm:B}
		Let $\mathfrak{g}_{\mathrm{ell}}$ be an elliptic Lie algebra of type $D_4^{(1,1)}$, $E_6^{(1,1)}$, $E_7^{(1,1)}$ or $E_8^{(1,1)}$, and let $\mathcal{M}$ be the corresponding $2$-periodic triangulated category. There exists a surjective Lie algebra homomorphism
		\begin{align*}
			\Theta:\mathfrak{g}_{\mathrm{ell}} &\longrightarrow \mathscr{LC}(\mathcal{M})\otimes_{\mathbb{Z}}\mathbb{C},\\
			e_i \longmapsto {\bf u}_{S_i},\quad
			e_{-i} &\longmapsto -{\bf u}_{ S_{\bar i}},\quad
			\alpha_i \longmapsto {\bf h}_{S_i},
		\end{align*}
		for all $i\in I$.
		Moreover, the homomorphism $\Theta$ preserves the $\mathbf{Q}$-grading, and its restriction to each root space $(\mathfrak{g}_{\mathrm{ell}})_\alpha$ is injective for all $\alpha\in R^{\mathrm{re}}\cup\{0\}$.
	\end{customthm}
	This homomorphism preserves root lattice gradation. It is injective on the Cartan subalgebra and all real root spaces, while the injectivity on imaginary root spaces remains an open problem.

	\subsection{Organization}
	The remainder of this paper is organized as follows. Section \ref{s:Preliminaries} collects necessary preliminary definitions and background results, including basic facts on Kac--Moody algebras, valued quivers, root categories and Ringel--Hall Lie algebras. In Section \ref{s:2-periodic-M}, we construct a family of $2$-periodic triangulated categories from algebras equipped with involutions, and discuss their fundamental properties. In Section \ref{s:GIM-RH-Lie}, we associate an acyclic valued quiver with an involution to each symmetrizable GIM matrix, and prove Theorem \ref{customthm:A} which identifies GIM algebras with integral Ringel--Hall Lie algebras of certain orbit categories.  Section \ref{sec:ell-Lie} focuses on the realization of elliptic Lie algebras via Ringel--Hall Lie algebras. By taking suitable quotients of the aforementioned quivers, we build the corresponding $2$-periodic triangulated categories and establish a surjective homomorphism from each elliptic Lie algebra to its associated integral Ringel--Hall Lie algebra (Theorem \ref{customthm:B}).
	
	\subsection*{Convention}
	Let $\mathcal{T}$ be either a triangulated category or an abelian category, we denote by $\go(\mathcal{T})$ its Grothendieck group. For any object $M\in \mathcal{T}$, we denote its image in $\go(\mathcal{T})$ by $\hat{M}$. In the specific case where $\mathcal{T}$ is a $2$-periodic triangulated category, we shall adopt the notation $h_M := \hat{M}$. We denote by
	$\operatorname{ind}\mathcal{T}$ the set of representatives of the isoclasses of all indecomposable objects in $\mathcal{T}$. When $\mathcal{T}=\mod H$, the category of finite-dimensional right $H$-modules for some finite-dimensional algebra $H$, we also write $\operatorname{ind} H:=\operatorname{ind} \mathcal{T}$.
	
	\subsection*{Acknowledgments}
	This work is partially supported by the National Natural Science Foundation of China (No. 12171333, 12471037, 12571040). 
	
	\section{Preliminaries}\label{s:Preliminaries}
	In this section, we recall the construction in \cite{PX00-Tri-Cat-Kac}, in which Kac--Moody algebras are realized via the Ringel--Hall Lie algebras of root categories.
	
	\subsection{Kac--Moody algebras}
	A {\em generalized Cartan
		matrix} (GCM for short) $A =(a_{ij})\in {M}_{n\times n}(\mathbb{Z})$ is an integer matrix satisfying the following conditions:
	\begin{itemize}
		\item[](GCM1) $a_{ii}=2$ for all $i=1,\ldots,n$;
		\item[](GCM2) $a_{ij}\leq 0$ for $i\neq j$;
		\item[](GCM3) $a_{ij}=0$ if and only if $a_{ji}=0$.
	\end{itemize}
	The {\em Kac--Moody algebra} ${\rm gcm}(A)$ associated with $A$ is a complex Lie algebra on $3n$ generators $\{e_i,f_i,h_i \mid 1\leq i\leq n\}$ and  the following defining relations:
	\begin{itemize}
		\item[](1) $[h_i,h_j]=0$, $1\leq i,j\leq n$;
		\item[](2) $[h_i,e_j]=a_{ij}e_j$, $[h_i,f_j]=-a_{ij}f_j$, $1\leq i,j\leq n$;
		\item[](3) $[e_i,f_j]=\delta_{i,j}h_i$, $1\leq i,j\leq n$;
		\item[](4) $({\rm ad} e_i)^{1-a_{ij}}e_j=0$, $({\rm ad} f_i)^{1-a_{ij}}f_j=0$, $1\leq i\neq j\leq n$.
	\end{itemize}
	Here $({\rm ad}x)y:=[x,y]$ for any $x,y$ in a (Lie) algebra.
	
	In this paper, we always assume that the generalized Cartan matrix $A$ is symmetrizable, that is, there is a diagonal matrix $D=\diag(d_1,\dots, d_n)$ with positive integers $d_1,\dots,d_n$ such that $DA$ is symmetric. 
	
	\begin{remark}
		Our convention of Kac--Moody algebras follows that  in \cite{B89-GIM-alg}. In \cite{Kac90}, a different definition of Kac–Moody algebras is adopted. However, when $A$ is symmetrizable, 
		the two definitions coincide.
	\end{remark}

	\subsection{Valued quivers and valued representations}\label{ss:valued-quiver}
	Let $\mathbb{F}$ be a finite field and $\overline{\mathbb{F}}$ its algebraic closure. For each positive integer $r$, denote by $\mathbb{F}_r$ the unique degree $r$ extension of $\mathbb{F}$ contained in $\overline{\mathbb{F}}$. For $\ell\mid r$, we fix a basis of $\mathbb{F}_r$ over $\mathbb{F}_\ell$, which allows us to identify $\mathbb{F}_r$ with an $\mathbb{F}_\ell$-vector space in a canonical way.
	
	Let $Q=(Q_0,Q_1,s,t)$ be a finite quiver, where $Q_0$ and $Q_1$ stand for the sets of vertices and arrows of $Q$, respectively, and $s,t\colon Q_1\rightarrow Q_0$ are the source and target maps.
	A valued quiver is a pair $(Q,\mathbf{d})$ consisting of a finite quiver $Q$ together with a vertex valuation function $\mathbf{d}\colon Q_0\rightarrow\mathbb{Z}_{>0}$. For each vertex $i\in Q_0$, we write $d_i:=\mathbf{d}(i)$.
	A {\it valued representation} of $(Q,\mathbf{d})$ over $\mathbb{F}$ consists of tuples $(V_i,V_\alpha)_{i\in Q_0,\alpha\in Q_1}$, where 
	\begin{itemize}
		\item $V_i$ is an $\mathbb{F}_{d_i}$-vector space for each vertex $i\in Q_0$;
		\item $V_\alpha:V_{s(\alpha)}\rightarrow V_{t(\alpha)}$ is an $\mathbb{F}_{\gcd(d_{s(\alpha)},d_{t(\alpha)})}$-linear map for each arrow $\alpha\in Q_1$, where $\gcd(d_{s(\alpha)},d_{t(\alpha)})$ is the greatest common divisor of $d_{s(\alpha)}$ and $d_{t(\alpha)}$.
	\end{itemize}
	
	Let $W=(W_i,W_\alpha)_{i\in Q_0,\alpha\in Q_1}$ be another valued representation of $(Q,\mathbf{d})$. A {\it homomorphism} from $V$ to $W$
	is a collection $f=(f_i)_{i\in Q_0}$, where $f_i:V_i\rightarrow W_i$ is an $\mathbb{F}_{d_i}$-linear map, such that  the following diagram commutes for each arrow $\alpha\in Q_1$:
	\[
	\xymatrix{V_{s(\alpha)}\ar[d]_{f_{s(\alpha)}}\ar[r]^{V_\alpha}&V_{t(\alpha)}\ar[d]^{f_{t(\alpha)}}\\
		W_{s(\alpha)}\ar[r]^{W_{\alpha}}&W_{t(\alpha)}.}
	\]
	
	A valued representation $V$ of $(Q,\mathbf{d})$ is {\it finite-dimensional} if $\sum_{i\in Q_0}\dim_{\mathbb{F}}V_i<\infty$. Denote by $\operatorname{Rep}_{\mathbb{F}}(Q,\mathbf{d})$ the category of valued representations of $(Q,\mathbf{d})$ over $\mathbb{F}$. Let $\operatorname{rep}_{\mathbb{F}}(Q,\mathbf{d})$ be the full subcategory of $\operatorname{Rep}_{\mathbb{F}}(Q,\mathbf{d})$ consisting of all finite-dimensional valued representations. Denote by $\operatorname{ind}_{\mathbb{F}}(Q,\mathbf{d})$ the set of representatives of the isoclasses of all indecomposable representations in $\rep_{\mathbb{F}}(Q,\mathbf{d})$.
	For the sake of brevity, we shall omit $\mathbb{F}$ and $\mathbf{d}$ from the notation when there is no risk of confusion. Both categories $\operatorname{Rep}(Q)$ and $\operatorname{rep}(Q)$ are hereditary abelian categories.

	From now on we assume that $Q$ is acyclic, \ie, $Q$ contains no nontrivial oriented cycles.  
	It is well known that there is
	a finite-dimensional hereditary $\mathbb{F}$-algebra $H$ such that the category $\mod H$ of finite-dimensional right $H$-modules is equivalent to $\rep(Q)$, see \cite{Ru11} for instance.
	
	For each vertex $i\in Q_0$, let $S_i$ be the simple (valued) representation of $(Q,\mathbf{d})$ associated to $i$, \ie, we assign $\mathbb{F}_{d_i}$ to $i$ and the zero vector space  to every other vertex. For convenience, we denote the Grothendieck group $\go(\rep(Q))$ by $\go(Q)$. Then $\{\hat{S}_i\mid i\in Q_0\}$ forms a $\mathbb{Z}$-basis of $\go(Q)$. The Euler--Ringel bilinear form $\langle-,-\rangle$ on $\go(Q)$ is given by 
	\[
	\langle \hat{M},\hat{N}\rangle =\dim_{\mathbb{F}}\Hom(M,N)-\dim_{\mathbb{F}}\Ext^1(M,N).
	\]
	Denote by $n_{ij}$ the number of arrows from $i$ to $j$. Since we have assumed that $Q$ is acyclic, at most one of $n_{ij}$ and $n_{ji}$ is nonzero.
	We have
	\[
	\langle \hat{S_i},\hat{S_j}\rangle=\begin{cases}
		d_i &\text{if $i=j$};\\
		-\frac{d_id_jn_{ij}}{\gcd(d_i,d_j)}&\text{if  $n_{ij}>0$};\\
		0 &\text{else}.
	\end{cases}
	\]
	Let $(-,-)$ be the symmetric Euler--Ringel form on $\go(Q)$, that is, \[(\hat{M},\hat{N}):=\langle \hat{M},\hat{N}\rangle+\langle \hat{N},\hat{M}\rangle\] for any $M,N\in \rep(Q)$. Then, on the $\mathbb{Z}$-basis $\{\hat{S}_i\mid i\in Q_0\}$, we have
	\[
	( \hat{S_i},\hat{S_j})=\begin{cases}
		2d_i &\text{if $i=j$};\\
		-\frac{d_id_j(n_{ij}+n_{ji})}{\gcd{(d_i,d_j)}} &\text{else}.
	\end{cases}
	\]
	Denote by $Q_0=\{1,\ldots,n\}$. 
	Let $A_Q=(a_{ij})\in M_{n\times n}(\mathbb{Z})$, where
	\begin{equation}\label{eq:gcm-valued-quiver}
		a_{ij}=\frac{2(\hat{S}_i,\hat{S}_j)}{(\hat{S}_i,\hat{S}_i)}=\begin{cases}
			2&\text{if $i=j$};\\
			-\frac{d_j(n_{ij}+n_{ji})}{\gcd(d_i,d_j)}&\text{else}.
		\end{cases}
	\end{equation}
	It follows that $A_Q$ is a GCM with a symmetrizer $D=\operatorname{diag}(d_1,\ldots, d_n)$.
	
	Conversely, for any given symmetrizable GCM $A=(a_{ij})\in M_{n\times n}(\mathbb{Z})$ with a symmetrizer $D=\operatorname{diag}(d_1,\ldots, d_n)$, we may associate an acyclic valued quiver $(Q(A),\mathbf{d})$ as follows:
	\begin{itemize}
		\item The vertex set $Q(A)_0$ is $\{1,\ldots, n\}$;
		\item The valuation $\mathbf{d}:Q(A)_0\rightarrow \mathbb{Z}_{>0}$ is given by $\mathbf{d}(i)=d_i$ for $1\leq i\leq n$;
		\item For $i<j$, we draw $\frac{|a_{ij}|\cdot\gcd(d_i,d_j)}{d_j}$ arrows from $i$ to $j$.
	\end{itemize}
	A direct computation shows that $(\hat{S}_i,\hat{S}_j)=d_ia_{ij}$ for $i\neq j$, which proves $A_{Q(A)}=A$.
	
	\subsection{Root categories}\label{ss:root-cat}
	We begin by establishing the notation and reviewing several preliminary notions. Let $\mathbb{F}$ be a field. Let $\mathcal{T}$ be an $\mathbb{F}$-linear triangulated category with suspension functor $\Sigma$, and let
	$F:\mathcal{T}\rightarrow\mathcal{T}$ be an autoequivalence. The {\em orbit category}
	$\mathcal{T}/F$ has the same objects as $\mathcal{T}$, and its morphism space
	from $X$ to $Y$ is given by
	\[
	\bigoplus_{p\in \mathbb{Z}} \Hom_{\mathcal{T}}(X,F^pY).
	\]
	The composition of morphisms in $\mathcal{T}/F$ is defined in a natural way, and there is
	a canonical projection functor
	$\pi:\mathcal{T}\longrightarrow \mathcal{T}/F$.  We say that the orbit category $\mathcal{T}/F$ admits a canonical triangulated structure if the projection functor $\pi:\mathcal{T}\rightarrow \mathcal{T}/F$ is a triangulated functor.
	
	The triangulated category $\mathcal{T}$ is called {\em $2$-periodic} if
	$\mathcal{T}$ is Krull--Schmidt and $\Sigma^2\cong \id$, where
	$\id:\mathcal{T}\rightarrow\mathcal{T}$ denotes the identity functor. 
	The symmetric Euler bilinear form $(-,-)_{\mathcal{T}}$ over $\go(\mathcal{T})$ for a $2$-periodic triangulated category $\mathcal{T}$ is defined as
	\begin{align}\label{eq:sym-Euler-form}
		\begin{split} (h_{X} ,h_{Y})_{\mathcal{T}}=&\operatorname{dim}_{\mathbb{F}} \operatorname{Hom}_{\mathcal{T}}(X, Y)-\operatorname{dim}_{\mathbb{F}} \operatorname{Hom}_{\mathcal{T}}(X, \Sigma Y) \\ &+\operatorname{dim}_{\mathbb{F}} \operatorname{Hom}_{\mathcal{T}}(Y, X)-\operatorname{dim}_{\mathbb{F}} \operatorname{Hom}_{\mathcal{T}}(Y, \Sigma X). 
		\end{split}
	\end{align}
	
	We now turn our attention to root categories. Let $H$ be a
	finite-dimensional hereditary $\mathbb{F}$-algebra. For instance, one may take
	$H$ to be a hereditary algebra such that the category $\mod H$ is equivalent to $\rep(Q)$ for an acyclic
	valued quiver $Q$; see Subsection~\ref{ss:valued-quiver}. Let $\mathcal{D}^b(H)$ be the bounded derived category of $\mod H$, and its suspension functor is denoted by $\Sigma$. The \emph{root category}
	of $H$ is defined to be the orbit category
	$\mathcal{D}^b(H)/\Sigma^2 .$
	It admits a canonical triangulated structure and, in particular, forms a
	$2$-periodic triangulated category. This was first proved by Peng--Xiao
	\cite{PX97-Root-cat-simple-Lie} using the homotopy category of $2$-periodic
	complexes of projective $H$-modules. Alternatively, this property also follows from a more general result by Keller
	\cite{Ke05-tri-orb-cat} (see Subsection~\ref{ss-Tri-orb-Cat} below).
	
	As in Subsection~\ref{ss:valued-quiver}, let
	$\{S_1,\dots,S_n\}$ be a complete set of representatives of the isomorphism
	classes of simple $H$-modules. Then
	$\{\hat{S}_1,\dots,\hat{S}_n\}$
	forms a $\mathbb{Z}$-basis of the Grothendieck group $\go(H):=\go(\mod H)$.
	It is well known that, via the natural embedding
	$\mod H \hookrightarrow \mathcal{D}^b(H),$
	we may identify $\go(\mathcal{D}^b(H))$ with $\go(H)$.
	


	The canonical projection functor $\pi_H:\mathcal{D}^b(H)\rightarrow \mathcal{D}^b(H)/\Sigma^2$ induces a surjective homomorphism of groups $\hat{\pi}_H:\go(\mathcal{D}^b( H))\rightarrow \go(\mathcal{D}^b(H)/\Sigma^2)$, where $\hat{\pi}_H(\hat{M})=h_{M}$ for $M\in \mathcal{D}^b( H)$. The following is well known (cf. \cite[Proposition 2.11]{Fu12}).
	\begin{lemma}\label{lem:iso-groth-root-cat}
		The homomorphism $\hat{\pi}_H:\go(\mathcal{D}^b( H))\rightarrow \go(\mathcal{D}^b(H)/\Sigma^2)$ is an isomorphism.
	\end{lemma}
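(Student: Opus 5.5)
We have to show that $\hat{\pi}_H$ is bijective. It is surjective by construction, so the work is injectivity, and we do this by constructing an inverse. The plan has three steps: describe the objects of the root category, build a left inverse on $\go(\mathcal{D}^b(H)/\Sigma^2)$, and conclude.

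\emph{Objects and Hom-spaces.} Since $H$ is hereditary, every object of $\mathcal{D}^b(H)$ is isomorphic to $\bigoplus_i \Sigma^i M_i$ with $M_i\in\mod H$. Hence every object of $\mathcal{D}^b(H)/\Sigma^2$ is isomorphic to $\pi_H(M\oplus \Sigma N)$ for some modules $M,N$. For $X,Y$ in $\mathcal{D}^b(H)$ we have
\[
\Hom_{\mathcal{D}^b(H)/\Sigma^2}(\pi_H X,\pi_H Y)=\bigoplus_p \Hom(X,\Sigma^{2p}Y).
\]

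\emph{The candidate inverse.} Define $\phi(X):=\sum_i(-1)^i\hat{H^i(X)}\in\go(H)$, where $H^i$ is the cohomology with respect to the standard t-structure. On $\mathcal{D}^b(H)$ this is the usual isomorphism $\go(\mathcal{D}^b(H))\cong\go(H)$. Because $\phi(\Sigma^2X)=\phi(X)$, the map $\phi$ is well defined on objects of the orbit category. For $\phi$ to descend to $\go(\mathcal{D}^b(H)/\Sigma^2)$, it has to be additive on every triangle of the root category. By definition of the triangulated structure, the triangles of the root category are exactly those isomorphic to images $\pi_H$ of triangles in $\mathcal{D}^b(H)$; this holds for Keller's structure and for Peng--Xiao's description via $2$-periodic complexes. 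Granting this, a triangle $\pi_HX\to\pi_HY\to\pi_HZ\to$ comes from a triangle $X\to Y\to Z\to\Sigma X$ in $\mathcal{D}^b(H)$. There $\hat Y=\hat X+\hat Z$, so $\phi(Y)=\phi(X)+\phi(Z)$, and $\phi$ descends to a homomorphism $\bar\phi:\go(\mathcal{D}^b(H)/\Sigma^2)\to\go(H)$.

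\emph{Conclusion and main obstacle.} Identifying $\go(\mathcal{D}^b(H))=\go(H)$, we have $\bar\phi\circ\hat{\pi}_H=\id$, because $\bar\phi(h_M)=\hat M$ on modules and modules generate $\go(H)$. So $\hat\pi_H$ is injective, and together with surjectivity it is an isomorphism. The main obstacle is justifying that every triangle in $\mathcal{D}^b(H)/\Sigma^2$ is isomorphic to the image of a triangle in $\mathcal{D}^b(H)$. Keller's theorem only says that $\pi_H$ is a triangle functor into a triangulated hull. To close this gap, I would work directly in the root category. Every morphism $f:\pi_HX\to\pi_HY$ is a finite sum of components $f_p\in\Hom(X,\Sigma^{2p}Y)$. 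Decompose $X$ and $Y$ into shifted indecomposable modules; since $H$ is hereditary, $\Hom(M,\Sigma^jN)\neq0$ only for $j\in\{0,1\}$ when $M,N$ are modules. So each component between indecomposable summands has a single nonzero degree. After replacing each indecomposable summand of $Y$ by a suitable $\Sigma^{2p}$-shift, which does not change its image in the orbit category, $f$ becomes the image of a single morphism in $\mathcal{D}^b(H)$, and its cone is the image of a cone in $\mathcal{D}^b(H)$. Since the triangles of the orbit category are determined up to isomorphism by cones of morphisms, this shows that every triangle is an image triangle, which is what the argument needs.
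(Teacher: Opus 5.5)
Your candidate inverse is the right one, but the step that makes it descend to $\go(\mathcal{D}^b(H)/\Sigma^2)$ fails. It is not true, by definition or otherwise, that every triangle of the root category is isomorphic to the image of a triangle of $\mathcal{D}^b(H)$. Keller's theorem only makes $\pi_H$ a triangle functor, and in Peng--Xiao's model the triangles are cone triangles of arbitrary maps of $2$-periodic complexes. Your repair also breaks down. Shifting summands of $Y$ cannot handle two summands of $X$ that hit the same summand of $Y$ in different degrees, e.g. $\pi_H(M_1\oplus\Sigma M_2)\to\pi_H N$ with components in $\Hom(M_1,N)$ (degree $0$) and $\Ext^1(M_2,N)$ (degree $1$). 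Shifting source summands as well handles such tree-like patterns, but a cycle of components is a real obstruction. For instance, let $H$ be tame hereditary with a stable tube of rank $3$ (type $\widetilde{E}_6$, say), let $M$ be a regular module of regular length $2$ in that tube, and put $N=\tau^{-1}M$. Then $\Hom(M,N)\neq 0\neq\Ext^1(M,N)$ while $\Ext^1(M,M)=0=\Ext^1(N,N)$. Let $f\colon\pi_H(M\oplus\Sigma M)\to\pi_H(N\oplus\Sigma N)$ have all four components nonzero: $M\to N$, $M\to\Sigma N$ and $\Sigma M\to\Sigma N$ in degree $0$, and $\Sigma M\to\Sigma^2N$ in degree $1$. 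In the root category $\Hom(M,\Sigma M)=0=\Hom(\Sigma M,M)$, and likewise for $N$. So every automorphism of the source or target is diagonal and cannot remove a component. But a morphism $\Sigma^{2a}M\oplus\Sigma^{2a'+1}M\to\Sigma^{2b}N\oplus\Sigma^{2b'+1}N$ in $\mathcal{D}^b(H)$ with four nonzero components would force $a=b=b'=a'$ and $b=a'+1$ at the same time. Hence the triangle on $f$ is not isomorphic to the image of any triangle of $\mathcal{D}^b(H)$.

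The paper itself only cites \cite{Fu12} for this lemma, but its proof of the analogous Lemma~\ref{lem:basis-go-M} shows what to do instead. Check additivity with a cohomological functor defined on the whole orbit category, rather than by lifting triangles. The triangle functor $\pi_\rho\colon\mathcal{D}^b(H)/\Sigma^2\to\mathcal{D}(\mod H)$ of Subsection~\ref{ss-Tri-orb-Cat} (with $F=\Sigma^2$) satisfies $\Sigma^2\pi_\rho\cong\pi_\rho$ and $\pi_\rho\pi_HX\cong\bigoplus_{p}\Sigma^{2p}X$. Hence $h_X\mapsto\widehat{H^0(\pi_\rho X)}-\widehat{H^1(\pi_\rho X)}$ is additive on every triangle, by the periodic long exact cohomology sequence. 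It is automatically constant on isomorphism classes of the orbit category; note that $\Sigma^2$-invariance of your $\phi$ alone does not give this without a Krull--Schmidt argument. On $\pi_HX$ it equals your $\sum_i(-1)^i\widehat{H^i(X)}$. In Peng--Xiao's model, $H^0$ and $H^1$ of $2$-periodic complexes do the same job. With this $\bar\phi$, the rest of your argument ($\bar\phi\circ\hat\pi_H=\id$ plus surjectivity) goes through unchanged.
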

	
	As a consequence, $\{h_{S_1},\ldots, h_{S_n}\}$ forms a $\mathbb{Z}$-basis of $\go(\mathcal{D}^b(H)/\Sigma^2)$.
	As noted above, the category $\mathcal{D}^b(H)/\Sigma^2$ admits a symmetric Euler bilinear form $(-,-)_{\mathcal{D}^b(H)/\Sigma^2}$. One readily verifies that $(h_{S_i}, h_{S_i})_{\mathcal{D}^b(H)/\Sigma^2}$ divide $(h_{S_i}, h_{S_j})_{\mathcal{D}^b(H)/\Sigma^2}$ for all $i, j$. Moreover, \[(h_{S_i},h_{S_j})_{\mathcal{D}^b(H)/\Sigma^2}=(\hat{S_i},\hat{S_j}),\] where $(-,-)$ is the symmetric Euler--Ringel bilinear form on $\go( H)$; see Subsection \ref{ss:valued-quiver}.
	Consequently, the matrix $A_H = (a_{ij})_{n \times n}$ defined by
	\[a_{ij} = \frac{2(h_{S_i}, h_{S_j})_{\mathcal{D}^b(H)/\Sigma^2}}{(h_{S_i}, h_{S_i})_{\mathcal{D}^b(H)/\Sigma^2}}\]
	is a GCM with a symmetrizer $D=\operatorname{diag}(d_{S_1},\ldots, d_{S_n})$, where $d_{S_i}=\frac{(\hat{S}_i,\hat{S}_i)}{2}$ for $1\leq i\leq n$.
	
	\subsection{Ringel-Hall Lie algebra}\label{ss-R-H-Lie}
	In this subsection, we recall the definition of Ringel--Hall Lie algebras as constructed by Peng and Xiao \cite{PX00-Tri-Cat-Kac}.
	Let $\mathbb{F}$ be a finite field with $|\mathbb{F}|=q$. Let $\mathcal{T}$ be a $\Hom$-finite $\mathbb{F}$-linear triangulated category with suspension functor $\Sigma$. 
	
	For any objects $X,Y,Z\in\mathcal{T}$, we define
	\[
	\begin{aligned} W(X, Y ; L)= & \left\{(f, g, h) \in \operatorname{Hom}_{\mathcal{T}}(X, L) \times \operatorname{Hom}_{\mathcal{T}}(L, Y) \times \operatorname{Hom}_{\mathcal{T}}(Y, \Sigma X)\mid\right. \\ & X \xrightarrow{f} L \xrightarrow{g} Y \xrightarrow{h} \Sigma X \text { is a triangle}\} .\end{aligned}
	\]
	Denote by $\operatorname{Aut}(X)$ the automorphism group of $X\in\mathcal{T}$. 
	The action of $\operatorname{Aut}(X) \times \operatorname{Aut}(Y)$ on $W(X, Y ; L)$ gives the orbit space
	\[
	V(X, Y ; L)=\left\{(f, g, h)^{\wedge} \mid(f, g, h) \in W(X, Y ; L)\right\},
	\]
	where
	\[
	(f, g, h)^{\wedge}=\left\{\left(fa,  c^{-1}g, (\Sigma a)^{-1}h c\right) \mid(a, c) \in \operatorname{Aut}(X) \times \operatorname{Aut}(Y)\right\} .
	\]
	
	Let $\Hom_{\mathcal{T}}(L,Y)_{\Sigma X}$ be the subset of $\Hom_{\mathcal{T}}(L,Y)$ consisting of morphisms $l:L\rightarrow Y$ whose mapping cone $\operatorname{Cone} (l)$ is isomorphic to $\Sigma X$. Considering the action of the group $\Aut(Y)$ on $\Hom_{\mathcal{T}}(L,Y)_{\Sigma X}$ by $d\cdot l=d\circ l$, the orbit of $l$ is denoted by $l^*$, and the orbit space is denoted by $\Hom_{\mathcal{T}}(L,Y)_{\Sigma X}^*$. Dually, one can also consider the subset $\Hom_{\mathcal{T}}(X,L)_Y$ of $\Hom_{\mathcal{T}}(X,L)$ with the group action of $\Aut (X)$ and the orbit space $\Hom_{\mathcal{T}}(X,L)_Y^*$. According to \cite[Proposition 4.1]{XX08-Hall-tri-cat}, we have the following result. 
	\begin{proposition}\label{pro:Hall number}
		For any $X,Y,L\in\mathcal{T}$, we have 
		$$|V(X,Y;L)|=|\Hom_{\mathcal{T}}(X,L)_Y^*|=|\Hom_{\mathcal{T}}(L,Y)_{\Sigma X}^*|.$$
	\end{proposition}

	We define 
	\begin{align}F_{Y,X}^L:=|V(X,Y;L)|,\text{ and }\gamma_{X,Y}^L:=F_{Y,X}^L-F_{X,Y}^L,
	\end{align}
	where $F_{Y,X}^L$ is called the {\it Hall number} of the triple $(X,L,Y)$.
	
	We further assume that $\mathcal{T}$ is $2$-periodic.

	Let $\mathfrak{h}$ be the subgroup of $\go(\mathcal{T})\otimes_{\mathbb{Z}}\mathbb{Q}$ generated by $\frac{h_M}{d_M}$, $M\in \operatorname{ind} \mathcal{T}$, where 
	\begin{align}
		d_M:=\dim_{\mathbb{F}}\big(\End_{\mathcal{T}}(X)/\rad \End_{\mathcal{T}} (X)\big).
	\end{align} 
	Then the symmetric Euler form defined in \eqref{eq:sym-Euler-form} can be extended to $\mathfrak{h}\times \mathfrak{h}$. Let $\mathfrak{n}$ be the free abelian group with basis $\{ u_X\mid X\in \operatorname{ind} \mathcal{T}\}$. We denote
	\begin{align}
		\mathfrak{g}(\mathcal{T}):=\mathfrak{h} \oplus \mathfrak{n}
	\end{align}
	and  the quotient group
	\begin{align}
		\mathfrak{g}(\mathcal{T})_{(q-1)}:=\mathfrak{g}(\mathcal{T}) /(q-1) \mathfrak{g}(\mathcal{T}).
	\end{align}
	The following remarkable result was proved by Peng--Xiao \cite{PX00-Tri-Cat-Kac}.
	\begin{theorem}[{\cite[Theorem 3.4]{PX00-Tri-Cat-Kac}}]
		The $\mathbb{Z}$-module $\mathfrak{g}(\mathcal{T})_{(q-1)}$ is a Lie algebra over $\mathbb{Z}/(q-1)\mathbb{Z}$ with the Lie operation defined as follows:
		\begin{itemize}
			\item[(1)] For any $X,Y\in\operatorname{ind} \mathcal{T}$,
			\[
			\left[u_{X}, u_{Y}\right]=\sum_{L \in \operatorname{ind} \mathcal{T}}\gamma_{XY}^L u_{L}-\delta_{X, \Sigma Y} \frac{h_{X}}{d_X},
			\]
			where $\delta_{X,\Sigma Y}=1$ for $X\cong \Sigma Y$ and $0$ else;
			\item[(2)] $[\mathfrak{h},\mathfrak{h}]=0$;
			\item[(3)] For any objects $X\in \mathcal{T}$ and $Y\in \operatorname{ind}\mathcal{T}$,
			\[
			[h_X,u_Y]=(h_X,h_Y)_{\mathcal{T}}u_Y,\quad [u_Y,h_X]=-[h_X,u_Y].
			\]
		\end{itemize}
	\end{theorem}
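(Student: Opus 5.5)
This theorem is due to Peng--Xiao; the plan below follows the strategy of \cite{PX00-Tri-Cat-Kac}. The bracket on generators is already fixed in (1)--(3). So what has to be shown is that the bracket is well defined on $\mathfrak{g}(\mathcal{T})_{(q-1)}$ and extends bilinearly, that it is antisymmetric, and that it satisfies the Jacobi identity modulo $q-1$. The first two points are routine. Antisymmetry of $[u_X,u_Y]$ holds because $\gamma_{XY}^L=-\gamma_{YX}^L$. For the Cartan term we use that if $X\cong\Sigma Y$ then $Y\cong \Sigma X$, by $2$-periodicity, together with $h_{\Sigma X}=-h_X$ in $\go(\mathcal{T})$; these combine to give $-\frac{h_X}{d_X}=\frac{h_Y}{d_Y}$, which is exactly the needed sign. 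Finally, (3) is well defined because $(h_X,h_Y)_{\mathcal{T}}$ is additive in $X$ along triangles.

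For the Jacobi identity I would split into cases by the number of $\mathfrak{h}$-arguments. With two or more elements of $\mathfrak{h}$ it is trivial. With exactly one, say $[h,[u_X,u_Y]]$, it reduces to $\gamma_{XY}^L\neq0\Rightarrow h_L=h_X+h_Y$, which holds because $L$ sits in a triangle $X\to L\to Y\to$. The Cartan term $\delta_{X,\Sigma Y}h_X/d_X$ commutes with $h$, and the required identity is $(h,h_X)+(h,h_Y)=0$ when $X\cong \Sigma Y$; this is true since $h_X+h_Y=0$ in that case.

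The main case is three elements $u_X,u_Y,u_Z$. Here I would follow Peng--Xiao's method and work with the associativity-type formulas for Hall numbers in triangulated categories established by To\"en and Xiao--Xu. Concretely, for $X,Y,Z,M$ one has the octahedral-axiom identity
\[
\sum_L F_{X,Y}^L F_{L,Z}^M\frac{|\Hom(L,\ldots)|}{\ldots}=\sum_{L'}F_{Y,Z}^{L'}F_{X,L'}^M\cdots .
\]
This identity holds with correction factors that are powers of $q$ coming from $\Hom(X,\Sigma Z)$-type terms and automorphism group orders. Modulo $q-1$ those factors become $1$, except when degenerate configurations occur, namely when some object is isomorphic to the suspension of another. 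The first step is therefore to prove the mod-$(q-1)$ associativity
\[
\sum_L \gamma_{XY}^L\gamma_{LZ}^M + \text{cyclic}\equiv 0 \pmod{q-1}
\]
for $M$ not involving the degenerate cases. The second step is to treat separately the terms where $L\cong \Sigma Z$ (and its cyclic versions), since these produce Cartan contributions $\frac{h}{d}$ and brackets $[h_Z,u_X]$. One must check that the counts of triangles $X\to \Sigma Z\to Y$, divided appropriately by $d_Z$, match $(h_Z,h_X)_{\mathcal{T}}/d_Z$ modulo $q-1$. This matching comes from counting the orbit space $\Hom(X,\Sigma Z)_{\ldots}^*$ via Proposition~\ref{pro:Hall number}, using $|\mathbb{F}^{k}\setminus 0|/|\Aut|$-type counts $\equiv \dim$ data modulo $q-1$. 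The Euler form with its four $\Hom$ terms arises as a sum of such dimension counts.

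I expect this second step to be the main obstacle. The fine bookkeeping of the cases $X\cong\Sigma Y$, $Y\cong \Sigma Z$, $Z\cong\Sigma X$, including $X\cong Y$ and the division by $d_X$, requires showing that the relevant orbit counts reduce to $\dim_{\mathbb{F}}$ of $\Hom$-spaces divided by $d$ modulo $q-1$. My approach would be to use the fact that $\End(X)/\rad$ is a finite field $\mathbb{F}_{d_X}$, so that $|\Aut X|\equiv (q^{d_X}-1)q^{\ast}$. This gives $\frac{q^{m}-1}{q^{d_X}-1}\equiv m/d_X \pmod{q-1}$, which after careful tracking yields exactly the symmetric Euler form terms.
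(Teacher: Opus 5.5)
\textbf{Verdict: there is a genuine gap.} The paper gives no proof of this statement; it quotes it from \cite{PX00-Tri-Cat-Kac}. So the comparison is with Peng--Xiao's argument, and your proposal does not carry out the main step.

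What you do get right: antisymmetry and the Jacobi identity with an entry from $\mathfrak{h}$ are fine. You should add one point: $[h_M/d_M,u_Y]\in\mathfrak{n}$ because $d_M$ divides $(h_M,h_Y)_{\mathcal{T}}$. Each of the four $\Hom$-spaces is a module over the local ring $\End(M)$, so its $\mathbb{F}$-dimension is a multiple of $d_M$.

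The case of three $u$'s is the whole content of the theorem, and the route you sketch fails as stated.
\begin{itemize}
\item To\"en's derived Hall algebra and the Xiao--Xu associativity theorem \cite{XX08-Hall-tri-cat} need homological finiteness: finite total cohomology of morphism complexes, resp.\ $\Hom(\Sigma^iX,Y)=0$ for $i\gg0$. This fails in every nonzero $2$-periodic category, since $\Hom(\Sigma^{2k}X,X)\cong\End(X)$. So there is no derived Hall algebra from which to inherit the Jacobi identity.
\item What does survive are octahedral counting identities between rational numbers built from quotients such as $|\Hom(L,Y)_{\Sigma X}|/|\Aut Y|$. Since $|\Aut Y|=(q^{d_Y}-1)\,q^{\dim_{\mathbb{F}}\rad\End(Y)}$ is divisible by $q-1$, such identities cannot be reduced modulo $q-1$. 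Your claim that ``the correction factors become $1$'' therefore has no meaning here.
\item The bracket is built from orbit numbers $F_{Y,X}^L=|\Hom(L,Y)^*_{\Sigma X}|$. These differ from $|\Hom(L,Y)_{\Sigma X}|/|\Aut Y|$ as soon as the $\Aut Y$-action has stabilizers, for example $1+r$ with $0\neq r\in\rad\End(Y)$ and $rl=0$.
\end{itemize}

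Two further ingredients are missing.

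First, any octahedral identity sums over \emph{all} intermediate objects, whereas $[[u_X,u_Y],u_Z]$ only involves indecomposable $L$. You need a Riedtmann-type lemma showing that decomposable middle terms contribute $0$ modulo $q-1$. Your plan never mentions this.

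Second, your treatment of the degenerate terms conflates two different cancellations.
\begin{itemize}
\item Triangles $X\to\Sigma Z\to Y\to\Sigma X$ are what produce the $\mathfrak{h}$-component of the Jacobiator,
\[
\gamma_{XY}^{\Sigma Z}\tfrac{h_Z}{d_Z}+\gamma_{YZ}^{\Sigma X}\tfrac{h_X}{d_X}+\gamma_{ZX}^{\Sigma Y}\tfrac{h_Y}{d_Y}.
\]
This must vanish by itself, e.g.\ by rotating triangles to compare the differently normalized counts, together with $h_X+h_Y+h_Z=0$. These triangles do not produce brackets $[h_Z,u_X]$.
\item The Euler-form terms come instead from the $\delta_{X,\Sigma Y}$-parts: $-\delta_{X,\Sigma Y}\frac{(h_X,h_Z)_{\mathcal{T}}}{d_X}u_Z$ and its cyclic versions. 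They must be matched against the coefficient of $u_Z$ in $\sum_{L\in\operatorname{ind}\mathcal{T}}(\gamma_{XY}^L\gamma_{LZ}^M+\gamma_{YZ}^L\gamma_{LX}^M+\gamma_{ZX}^L\gamma_{LY}^M)u_M$.
\end{itemize}

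What is needed throughout is a genuine modulo-$(q-1)$ count of \emph{orbits} through the octahedral axiom. For instance, orbits of $1+\rad\End(Y)$ on $\Hom(L,Y)$ are the affine subspaces $l+(\rad\End(Y))\,l$, of $q$-power size. The residual action of $(\End(Y)/\rad\End(Y))^\times$ is then what yields the factors $1/d$, and your congruence $\frac{q^m-1}{q^d-1}\equiv\frac md$ belongs there. That counting is the substance of \cite{PX00-Tri-Cat-Kac}, and your proposal does not supply it.
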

	The Lie algebra $\mathfrak{g}(\mathcal{T})_{(q-1)}$ is called the {\em Ringel-Hall Lie algebra} of $\mathcal{T}$.
	\begin{remark}
		A triangulated category $\mathcal{T}$ is called {\em proper}, if for any nonzero indecomposable object $X\in \mathcal{T}$, we have $\hat{X}\ne 0$ in the Grothendieck group $\go(\mathcal{T})$. In  \cite{PX00-Tri-Cat-Kac}, the Ringel--Hall Lie algebra was defined for proper triangulated category. However, the definition of the Lie operation here is suitable for non-proper triangulated categories (see \cite{X06-der-cat-Lie-alg}), and it coincides with the original definition whenever $\mathcal{T}$ is proper.  
	\end{remark}
	
	\subsection{Kac--Moody algebras via Ringel--Hall Lie algebras}\label{ss:intergral-R-H-Lie-alg}

	For any field extension of $\mathbb{F}\subset \mathbb{K}$, we set $V^\mathbb{K}:=V\otimes_\mathbb{F} \mathbb{K}$ for any $\mathbb{F}$-space $V$. Recall that $H$ is a finite-dimensional hereditary $\mathbb{F}$-algebra. Then $H^\mathbb{K}$ is a $\mathbb{K}$-algebra and for $M\in \mod H$, $M^\mathbb{K}$ has a canonical $H^\mathbb{K}$-module structure. The field $\mathbb{K}$ is called
	{\em conservative} for $X\in \operatorname{ind} H$, if $(\End_H(X)/\rad\End_H(X))^\mathbb{K}$ is a field. Recall that  $\ov{\mathbb{F}}$ is an algebraic closure of $\mathbb{F}$.
	Set
	\[
	\Omega=\{\mathbb{K}\mid\mathbb{F}\subseteq \mathbb{K}\subseteq \ov{\mathbb{F}}\,\, \text{is a finite field extension and conservative for all simple $H$-modules}\}.
	\]
	It is known that $\Omega$ is an infinite set. For any $\mathbb{K}\in\Omega$,  $H^\mathbb{K}$ is also a finite-dimensional hereditary $\mathbb{K}$-algebra with the same symmetric Euler form and minimal symmetrization as the algebra $H$.
	
	Let $\mathfrak{g}((\mathcal{D}^b(H)/\Sigma^2)^\mathbb{K})_{(|\mathbb{K}|-1)}$ be the Ringel-Hall Lie algebra of the root category \[(\mathcal{D}^b(H)/\Sigma^2)^\mathbb{K}:=\mathcal{D}^b(\mod H^\mathbb{K})/\Sigma^2.\] We consider the direct product of the Lie algebras \[\prod_{\mathbb{K}\in \Omega} \mathfrak{g}((\mathcal{D}^b(H)/\Sigma^2)^\mathbb{K})_{(|\mathbb{K}|-1)},\] which has a natural $\mathbb{Z}$-Lie algebraic structure, and denote $\mathscr{LC}(\mathcal{D}^b(H)/\Sigma^2)$ as its Lie subalgebra generated by 
	\begin{align}
		{\bf u}_{S_i}:=(u_{{S_i}^\mathbb{K}})_{\mathbb{K}\in\Omega},\quad
		{\bf u}_{\Sigma S_{i}}:=
		(u_{\Sigma {S_i}^\mathbb{K}})_{\mathbb{K}\in\Omega},\quad
		{\bf h}_{S_i}:=(h_{{S_i}^\mathbb{K}})_{\mathbb{K}\in\Omega}
		,i=1,\dots,n.  
	\end{align}
	The following remarkable result gives a realization of symmetrizable Kac--Moody algebras via Ringel--Hall Lie algebras.
	\begin{theorem}[{\cite[Theorem 4.7]{PX00-Tri-Cat-Kac}}]\label{thm:PX-realization}
		The Kac--Moody algebra ${\rm gcm}(A_H)$ is isomorphic to the integral Ringel--Hall Lie algebra $\mathscr{LC}(\mathcal{D}^b(H)/\Sigma^2)\otimes_{\mathbb{Z}}\mathbb{C}$, where the isomorphism is given by $e_i\mapsto {\bf u}_{S_i}$, $f_i\mapsto -{\bf u}_{\Sigma S_i}$ and $h_i\mapsto \frac{{\bf h}_{S_i}}{d_{S_i}}$ for $1\leq i\leq n$.
	\end{theorem}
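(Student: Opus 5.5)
The plan is to build a Lie algebra homomorphism from ${\rm gcm}(A_H)$ onto $\mathscr{LC}(\mathcal{D}^b(H)/\Sigma^2)\otimes_{\mathbb{Z}}\mathbb{C}$ by checking the defining relations on generators, and then to get injectivity from the Gabber--Kac characterization of symmetrizable Kac--Moody algebras.

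Throughout, work in each factor $\mathfrak{g}((\mathcal{D}^b(H)/\Sigma^2)^\mathbb{K})_{(|\mathbb{K}|-1)}$ separately. The goal is to obtain structure constants that are integers independent of $\mathbb{K}\in\Omega$. Since $\Omega$ is infinite, such an identity then holds in the product, and integrality lets us pass to $\otimes_\mathbb{Z}\mathbb{C}$ without torsion problems.

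\textbf{Relations (1)--(3).} Relation (1) is built into the definition. For (2), use $[h_X,u_Y]=(h_X,h_Y)u_Y$ together with $(h_{S_i},h_{S_j})=(\hat S_i,\hat S_j)=d_i a_{ij}$. This gives $[\tfrac{{\bf h}_{S_i}}{d_{S_i}},{\bf u}_{S_j}]=a_{ij}{\bf u}_{S_j}$. The corresponding identity for ${\bf u}_{\Sigma S_j}$ follows because $h_{\Sigma S_j}=-h_{S_j}$.

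For (3), compute $[u_{S_i},u_{\Sigma S_j}]$ from the Hall-number formula. When $i=j$, we have $S_i\cong\Sigma(\Sigma S_i)$ in the $2$-periodic category, so the $\delta$-term contributes $-h_{S_i}/d_{S_i}$. The remaining $\gamma$-terms vanish: any $L$ in a triangle between $S_i$ and $\Sigma S_j$ has class $\pm(\hat S_i-\hat S_j)$. Indecomposables of $\mathcal{D}^b(H)/\Sigma^2$ are images of $M$ or $\Sigma M$ with $M\in\operatorname{ind}H$. Their classes are therefore $\pm$ dimension vectors of indecomposable modules, which are never of the mixed-sign form $\hat S_i-\hat S_j$ for $i\ne j$, nor zero. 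This yields $[e_i,f_j]\mapsto\delta_{ij}h_i$, with the sign conventions $f_i\mapsto-{\bf u}_{\Sigma S_i}$.

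\textbf{Serre relations.} Here the same class argument is applied to iterated brackets. The element $(\operatorname{ad}u_{S_i})^{1-a_{ij}}u_{S_j}$ lies in the span of $u_L$ with $L$ indecomposable of class $(1-a_{ij})\hat S_i+\hat S_j$. Up to shift, such an $L$ would be an indecomposable $H$-module of that dimension vector. I plan to rule this out by a direct Hom/Ext argument rather than invoking Kac's theorem. Such a module $M$ has socle or top concentrated at $i$ in a way forced by $\dim\Ext^1(S_j,S_i)$. Its dimension at $i$ would exceed the number of independent copies of $S_i$ that $S_j$ can be glued to, which is $|a_{ij}|$ in units of $\mathbb{F}_{d_i}$, so $M$ would split off a copy of $S_i$. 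Any brackets involving $\delta$-terms do not occur, since no intermediate class is of the form $\hat X+\Sigma\hat X$. The $f$-relations follow by applying $\Sigma$.

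\textbf{Surjectivity and injectivity.} We thus obtain a homomorphism $\Phi$ from ${\rm gcm}(A_H)$ to the target. It is surjective because $\mathscr{LC}$ is by definition generated by the images of the generators. The homomorphism preserves the $Q$-grading by $\go(H)\cong\go(\mathcal{D}^b(H)/\Sigma^2)$, using Lemma~\ref{lem:iso-groth-root-cat}. It is injective on $\mathfrak{h}$, because the $h_{S_i}$ form a $\mathbb{Z}$-basis and hence are linearly independent after tensoring with $\mathbb{C}$. Therefore $\ker\Phi$ is a graded ideal meeting $\mathfrak{h}$ trivially. For symmetrizable $A$, Gabber--Kac states that ${\rm gcm}(A)$ has no nonzero such ideals, so $\Phi$ is an isomorphism.

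The main obstacle is the Serre relation step over a valued (non-simply-laced) quiver. The class-vanishing argument must be made precise with the $\mathbb{F}_{d_i}$-dimensions, and one must confirm that no cancellation subtlety arises from working modulo $|\mathbb{K}|-1$ rather than over $\mathbb{Z}$. For the latter, I would first check that every Hall number appearing is a polynomial in $|\mathbb{K}|$, whose value at $|\mathbb{K}|=1$ is the integer structure constant.
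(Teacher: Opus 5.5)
The paper gives no proof of this statement; it quotes it from Peng--Xiao. So there is no internal argument to compare against. Your overall route is sound: check the Chevalley--Serre relations factor by factor, get surjectivity from the definition of $\mathscr{LC}$, and get injectivity from Gabber--Kac applied to a $K_0$-graded kernel that meets the Cartan part trivially.

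Your Serre step is correct once made precise. By the grading, $(\operatorname{ad}u_{S_i})^{1-a_{ij}}u_{S_j}$ is a combination of $u_L$ with $h_L=(1-a_{ij})h_{S_i}+h_{S_j}$. No indecomposable module has this class: $\Ext^1$ between $S_i$ and $S_j$ (in the nonzero direction) has dimension $|a_{ij}|$ over $\End(S_i)\cong\mathbb{F}_{d_i}$. Hence $1-a_{ij}$ extension classes are $\mathbb{F}_{d_i}$-dependent, and a copy of $S_i$ splits off. Gradedness is also what lets you transfer Gabber--Kac, a statement about Kac's $\mathfrak{g}(A)$ with its full Cartan subalgebra, to ${\rm gcm}(A)$, which is the derived algebra. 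A $Q$-graded ideal of the derived algebra is automatically $\operatorname{ad}\mathfrak{h}$-stable.

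The step that fails as written is injectivity on the Cartan part. Each factor $\mathfrak{g}((\mathcal{D}^b(H)/\Sigma^2)^{\mathbb{K}})_{(|\mathbb{K}|-1)}$ is killed by $|\mathbb{K}|-1$. So $\mathscr{LC}$ can contain torsion, and $-\otimes_{\mathbb{Z}}\mathbb{C}$ annihilates torsion. The fact that $h_{S_1},\dots,h_{S_n}$ is a $\mathbb{Z}$-basis of $K_0$ therefore says nothing about the elements $\mathbf{h}_{S_i}\otimes 1$. Indeed, if $\Omega$ were a single field, $\mathscr{LC}\otimes_{\mathbb{Z}}\mathbb{C}$ would be $0$.

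What you must show is that no nonzero integer combination $x=\sum_i c_i\mathbf{h}_{S_i}/d_{S_i}$ is torsion. Suppose $mx=0$ with $m\neq 0$. Then in the $\mathbb{K}$-factor, $|\mathbb{K}|-1$ divides every $mc_i$. This holds because the $h_{S_i}/d_{S_i}$ form a $\mathbb{Z}$-basis of the Cartan lattice $\mathfrak{h}$ there: by Galois descent, for indecomposable $M$ the integer $d_M$ divides the $\mathbb{K}$-dimension of every vertex space of $M$. Since $|\mathbb{K}|-1$ is unbounded on the infinite set $\Omega$, all $c_i=0$.

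This is the one place where the infinitude of $\Omega$ enters. Identities pass to the product for free, so your first paragraph assigns $\Omega$ the wrong role.

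Relatedly, the concern in your last paragraph is unnecessary. Every relation you check holds exactly in each factor: the brackets are $0$, $a_{ij}\mathbf{u}_{S_j}$, or $\pm\mathbf{h}_{S_i}/d_{S_i}$ on the nose, with no Hall numbers surviving. So no Hall polynomials and no specialization at $|\mathbb{K}|=1$ are needed. This is fortunate, since for wild $H$ Hall polynomials are not available in general.
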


	\section{$2$-periodic triangulated categories}\label{s:2-periodic-M} 
	In this section, we introduce a special class of $2$-periodic triangulated categories $\mathcal{M}_\theta$, constructed from involutions $\theta$ on certain finite-dimensional algebras. In the subsequent sections, we shall use their Ringel--Hall Lie
	algebras to study GIM algebras and elliptic Lie algebras. These $2$-periodic triangulated categories arise as triangulated hulls of orbit categories; therefore, we
	begin by recalling Keller's construction of triangulated orbit categories
	\cite{Ke05-tri-orb-cat}.

	
	\subsection{Triangulated orbit categories}\label{ss-Tri-orb-Cat}
	
	Let $A$ be a finite-dimensional $\mathbb{F}$-algebra over some field $\mathbb{F}$, and $\mod A$ be the category of finite-dimensional right $A$-modules. Denote by $\mathcal{D}^b(A)$ the bounded derived category of $\mod A$ with suspension functor $\Sigma$.
	Suppose that $F:\mathcal{D}^b(A)\rightarrow \mathcal{D}^b(A)$ is a {\it standard equivalence}, \ie, $F$ is isomorphic to the derived tensor product
	\[
	? \stackrel{L}{\otimes}_{A} X: \mathcal{D}^{b}(A) \longrightarrow \mathcal{D}^{b}(A)
	\]
	for some complex $X$ of $A$-$A$-bimodules.  In general, the orbit category $\mathcal{D}^b(A)/F$ is not triangulated. To remedy this, Keller introduced a triangulated hull of $\mathcal{D}^b(A)/F$.
	Below, We briefly recall the construction of the triangulated hull following \cite[Section~5]{Ke05-tri-orb-cat}; for the
	terminology concerning differential graded ($=$ dg) categories, we refer to
	\cite{Ke94-dg-cat}.
	
	Let $\mathcal{A}$ be the dg category of bounded complexes of finitely generated
	projective right $A$-modules. In particular, the homotopy category
	$H^0(\mathcal{A})$ is equivalent to $\mathcal{D}^b(A)$. The
	functor
	$F:\mathcal{D}^b(A)\longrightarrow \mathcal{D}^b(A)$
	admits a dg lift
	$\mathcal{F}:\mathcal{A}\longrightarrow \mathcal{A}.$
	The \emph{dg orbit category} $\mathcal{B}$ of $\mathcal{A}$ with respect to
	$\mathcal{F}$ is defined to have the same objects as $\mathcal{A}$, and its
	morphism complexes are given by
	\[
	\mathcal{B}(X,Y)
	\cong
	\colim_{p}\bigoplus_{n\geq 0}
	\mathcal{A}\bigl(\mathcal{F}^nX,\mathcal{F}^pY\bigr).
	\]
	This construction yields
	$H^0(\mathcal{B})\cong H^0(\mathcal{A})/F
	\cong \mathcal{D}^b(A)/F.$ Let $\mathcal{D}\mathcal{A}$ and $\mathcal{D}\mathcal{B}$ denote the derived
	categories of $\mathcal{A}$ and $\mathcal{B}$, respectively. Let
	$\mathcal{M}:=\operatorname{per}(\mathcal{B})$ denote the perfect derived category of $\mathcal{B}$, \ie, the thick subcategory of $\mathcal{DB}$ generated by the representable functors $\Hom_{\mathcal{B}}(-,X)$ for all $X\in \mathcal{B}$. The Yoneda embedding
	$H^0(\mathcal{B})\rightarrow\mathcal{D}\mathcal{B}$
	then induces a canonical embedding
	$\mathcal{D}^b(A)/F \rightarrow \mathcal{M}$. The triangulated category $\mathcal{M}$ is called the {\em triangulated hull} of the orbit category
	$\mathcal{D}^{b}(A)/F$. 
	
	
	Let $\pi:\mathcal{A}\rightarrow \mathcal{B}$ be the canonical dg functor. It yields a $\mathcal{B}\otimes \mathcal{A}^{op}$-module
	\[
	(B, A) \longrightarrow \mathcal{B}(B, \pi A),
	\]
	which induce the standard functors
	\begin{eqnarray}\label{eq:functor-pi-rho}
		\begin{tikzcd}
			{\mathcal{DA}} & {\mathcal{DB}.}
			\arrow["{\pi_*}", shift left, from=1-1, to=1-2]
			\arrow["{\pi_\rho}", shift left, from=1-2, to=1-1]
		\end{tikzcd}
	\end{eqnarray}
	Note that there is a canonical equivalence of triangulated categories $\mathcal{DA}\stackrel{\sim}{\longrightarrow}\mathcal{D}(\operatorname{Mod} A)$, where $\mathcal{D}(\operatorname{Mod} A)$ denotes the derived category of
	right $A$-modules. By identifying $\mathcal{DA}$ with $\mathcal{D}(\operatorname{Mod} A)$, for any $X\in \mathcal{D}^b(\mod A)$, we have
	\begin{align}\label{eq:pi-rho-pi-star-X}
		\pi_\rho\pi_\ast X\cong \bigoplus_{p\in \mathbb{Z}}F^pX.  
	\end{align}
	
	In general, the embedding $\mathcal{D}^b(A)/F \rightarrow \mathcal{M}$ is not dense (cf. \cite{Ke05-tri-orb-cat}).
	However, under certain conditions, this embedding does become dense. The following remarkable result established by Keller \cite{Ke05-tri-orb-cat} provides precise criteria for this density.

	\begin{theorem}[{\cite[Theorem 9.9]{Ke05-tri-orb-cat}}] \label{Thm: tri-orb-cat}
		Let $A$ be a finite-dimensional hereditary $\mathbb{F}$-algebra, and assume that the derived category $\mathcal{D}^b(A)$ along with the standard equivalence $F$ satisfies the following conditions:
		\begin{itemize}
			\item For each indecomposable $U$ of $\mod A$, only finitely many objects $F^iU,i\in \mathbb{Z}$, lie in $\mod A$;
			\item There is an integer $N\ge 0$ such that the $F$-orbit of each indecomposable of $\mathcal{D}^b(A)$ contains an object $\Sigma^p U$ for some $0\le p\le N$ and some indecomposable $U$ of $\mod A$.
		\end{itemize}
		Then the orbit category $\mathcal{D}^b(A)/F$ admits a canonical triangulated structure.
	\end{theorem}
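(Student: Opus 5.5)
This is Keller's theorem \cite[Theorem 9.9]{Ke05-tri-orb-cat}. The plan is to show that, under the two hypotheses, the canonical embedding $\mathcal{D}^b(A)/F\to\mathcal{M}=\operatorname{per}(\mathcal{B})$ is dense. Since $\mathcal{M}$ is triangulated and the projection $\pi$ lifts to a dg functor, transporting the structure then makes $\mathcal{D}^b(A)/F$ triangulated with $\pi$ exact. Because $\mathcal{M}$ is the thick closure of the image, it suffices to prove that the essential image $\mathcal{U}$ of $\mathcal{D}^b(A)/F$ in $\mathcal{M}$ is closed under $\Sigma$, under cones, and under direct summands. Closure under $\Sigma$ is clear, since $\Sigma$ commutes with $F$.

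For summands, we first show that $\mathcal{U}$ is Krull--Schmidt-like. The first hypothesis gives, for indecomposables $U,V$ of $\mathcal{D}^b(A)$, that $\bigoplus_p\Hom(U,F^pV)$ is finite-dimensional, since hereditary $A$ forces nonzero terms only when $F^pV$ lies in $\mod A$ or $\Sigma\mod A$ relative to $U$. We then check that $\End_{\mathcal{D}^b(A)/F}(U)$ is local for $U$ indecomposable. The components $\Hom(U,F^pU)$ with $p\neq0$ should lie in the radical, because composing around the orbit eventually leaves the finitely many degrees where Homs are nonzero, so these maps are nilpotent. Hence idempotents split in $\mathcal{U}$. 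Since $\mathcal{M}$ is idempotent complete only up to summands of the image, the statement we need is that every summand in $\mathcal{M}$ of an object in $\mathcal{U}$ already lies in $\mathcal{U}$; this holds because its endomorphism ring already splits there.

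The main obstacle is closure under cones. Take a morphism $u\colon X\to Y$ in the orbit category between indecomposables. By the second hypothesis we may choose representatives $X=\Sigma^aU$ and $Y=\Sigma^bV$ with $U,V\in\mod A$ and $0\le a,b\le N$. The morphism $u$ is a finite sum of components $u_p\colon X\to F^pY$. I would argue via the adjunction $(\pi_*,\pi_\rho)$ and formula \eqref{eq:pi-rho-pi-star-X}: $\pi_*$ is fully faithful in the appropriate orbit sense, and $\pi_\rho\pi_*Y=\bigoplus_pF^pY$. The cone of $\pi_*u$ in $\mathcal{DB}$ is then detected by $\pi_\rho$ as the cone of $X\to\bigoplus_pF^pY$. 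Here we use that $A$ is hereditary, so $\mathcal{D}^b(A)$ has objects that are direct sums of shifted modules and the cone splits into summands.

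To realize this cone, I would replace $u$ by a single morphism in $\mathcal{D}^b(A)$ from $X$ to the finite direct sum $\bigoplus_{p\in S}F^pY$, where $S$ is the finite support. Its cone $C$ in $\mathcal{D}^b(A)$ maps under $\pi$ to an object whose image in $\mathcal{M}$ agrees with $\operatorname{Cone}(\pi_*u)\oplus(\text{copies of }\pi Y\text{ and shifts})$. This is arranged by adding the identity-type triangles $F^pY\to Y$ in the orbit category, which are isomorphisms there. Then $\operatorname{Cone}(\pi_*u)$ is a summand of $\pi C$, and summand closure finishes the argument. The delicate point is checking that the isomorphisms $F^pY\cong Y$ in $\mathcal{B}$ are compatible with the triangle, and this is where the dg enhancement is needed, not just the homotopy category. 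An induction on the length of objects reduces general morphisms to the indecomposable case.
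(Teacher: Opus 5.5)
The paper gives no proof of this statement (it only cites Keller), so I judge your argument on its own terms. The preliminary steps are fine:
\begin{itemize}
\item density of the embedding suffices;
\item the first hypothesis gives Hom-finiteness and local endomorphism rings;
\item hence idempotents split, and $\mathcal{U}$ is closed under summands.
\end{itemize}
The gap is in the cone step, which is where all the content of the theorem lies. It fails in two places.

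\textbf{The splitting you claim does not hold.} In $\mathcal{M}$, identify $\pi F^pY\cong\pi Y$. Then $\pi C$ is the cone of the column $\hat u=(u_p)_{p\in S}\colon X\to Y^{\oplus|S|}$, and $u=\nabla\circ\hat u$ with $\nabla$ the codiagonal. The octahedral axiom gives a triangle $\pi C\to\operatorname{Cone}(u)\to\Sigma Y^{\oplus(|S|-1)}\to\Sigma\pi C$.

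So $\operatorname{Cone}(u)$ is an \emph{extension} of objects of $\mathcal{U}$, not a summand of $\pi C$. Asking this triangle to split is asking for extension-closedness of $\mathcal{U}$, which is what you are trying to prove. No compatibility check in $\mathcal{B}$ changes this, and ``adding identity-type triangles'' produces nothing.

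The claim genuinely fails already in the paper's setting. Take the GIM with $c_{12}=c_{23}=-1$ and $c_{13}=1$. Then $Q$ has arrows $1\to2\to3$, $\bar1\to\bar2\to\bar3$, $1\to\bar3$ and $\bar1\to3$. Take $F=\theta\Sigma$; both hypotheses hold with $N=0$.
\begin{itemize}
\item Let $U$ be the indecomposable with dimension vector $e_2+e_3+e_{\bar1}$.
\item Let $a\colon U\to S_2$ be the surjection; its kernel $K$ is supported on $\bar1\to3$.
\item Let $0\neq b\in\Ext^1(U,S_{\bar2})=\Hom_{\mathcal{D}}(U,FS_2)$.
\end{itemize}
Then $u=a+b\colon U\to S_2$ has two nonzero homogeneous components. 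Restriction $\Ext^1(U,S_{\bar2})\to\Ext^1(K,S_{\bar2})$ is an isomorphism. Hence $C=\operatorname{Cone}((a,b)^t)\cong\Sigma E$, where $E$ is the indecomposable with dimension vector $e_{\bar1}+e_{\bar2}+e_3$.

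So $\pi C$ is indecomposable. On the other hand, $\operatorname{Cone}(u)\neq0$, since $U\not\cong S_2$ in the orbit category. Moreover $[\pi C]-[\operatorname{Cone}(u)]=h_{S_2}\neq0$ in $K_0$. Therefore $\pi C$ is not $\operatorname{Cone}(u)$ plus copies of $S_2$ or its shift.

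\textbf{The reduction to indecomposable $X,Y$ ``by induction on length'' is not justified.} The cone of a matrix of morphisms is not built from the cones of its entries. Reducing via the octahedral axiom to extensions between indecomposables produces middle terms with more indecomposable summands, and there is no evident decreasing invariant.

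This is not a side issue. For the root category, $F=\Sigma^2$, every morphism between indecomposables is already homogeneous. Your cone step is therefore empty there, and the whole difficulty sits in the decomposable case you wave away.

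A correct proof must use the hypotheses to control arbitrary objects of the hull, or non-homogeneous morphisms between decomposable objects. It cannot reduce to a splitting statement of this kind.
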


	\subsection{2-periodic triangulated category $\mathcal{M}_\theta$}\label{ss:2-periodic-M}
	In this subsection, we shall always assume that $A$ is a finite-dimensional
	$\mathbb{F}$-algebra of finite global dimension. Let $\theta$ be an involution
	of the algebra $A$. Then $\theta$ induces an exact additive functor on the
	category of finite-dimensional modules,
	$\theta:\operatorname{mod} A \rightarrow \operatorname{mod} A,$
	where, by abuse of notation, we denote both the algebra involution and the
	induced functor by the same symbol $\theta$.
	
	Clearly, one has $\theta^{2}\cong \operatorname{id}_{\operatorname{mod} A}$. Moreover, the functor
	$\theta$ induces a triangle equivalence on the bounded derived category
	$\mathcal{D}^{b}(A)$. Continuing this abuse of notation, we denote the induced
	triangulated functor again by
	$\theta:\mathcal{D}^{b}(A)\rightarrow \mathcal{D}^{b}(A).$
	It follows that
	$\theta^{2}\cong \operatorname{id}_{\mathcal{D}^{b}(A)},$
	and that $\theta$ commutes with the suspension functor $\Sigma$ of
	$\mathcal{D}^{b}(A)$.
	
	Let
	$G:=\theta\circ \Sigma:\mathcal{D}^{b}(A)\rightarrow \mathcal{D}^{b}(A)$
	be the composition of $\theta$ with the suspension functor $\Sigma$. Since both
	$\theta$ and $\Sigma$ are autoequivalences, $G$ is also an autoequivalence of
	$\mathcal{D}^{b}(A)$. Furthermore, $G$ is a standard equivalence. Indeed,
	one can define the $A$-$A$-bimodule $A^{\theta}$. As a vector space, one has
	$A^{\theta}=A$, while its bimodule structure is given by
	$a\cdot x\cdot b=\theta(a)xb$, where $a,b\in A,x\in A^{\theta}.$
	There is an isomorphism of triangulated functors
	\[
	? \stackrel{L}{\otimes}_{A} \Sigma A^{\theta}\cong G
	:\mathcal{D}^{b}(A)\longrightarrow \mathcal{D}^{b}(A).  
	\]
	As explained in Section~\ref{ss-Tri-orb-Cat}, one can define the
	triangulated hull $\mathcal{M}_{\theta}$ of the orbit category
	$\mathcal{D}^{b}(A)/G$. We now discuss some of its properties.
	
	As discussed in Section~\ref{ss-Tri-orb-Cat}, for any object $X\in \mathcal{D}^b(A)$, one has $\pi_\rho X\in \mathcal{D}(\Mod A)$. Moreover, equation~\eqref{eq:pi-rho-pi-star-X}, together with the definition of the functor $G$ and the finite global dimension of $A$, implies that 
	$\pi_\rho X\in \mathcal{D}(\mod A)$. By induction, the same conclusion holds for every object 
	$X\in \mathcal M_\theta$.
	
	In the category of complexes $\mathcal{C}(\mod A)$, an object is called
	a $\theta$-complex if it is a complex
	$X^{\bullet}=(X^i,d^i)$ satisfying
	\[
	X^{i+1}=\theta X^i,
	\quad
	d^{i+1}=\theta d^i, \quad
	\forall  i\in \mathbb{Z}.
	\]
	The morphism between two $\theta$-complexes is a chain map $f=(f^i)_{i\in\mathbb{Z}}$ satisfying
	\[
	f^{i+1}=\theta f^i,\quad \forall i\in \mathbb{Z}.
	\]
	We define $\mathcal{C}_{\theta}(A)$ to be the
	subcategory of $\mathcal{C}(\mod A)$ whose objects are $\theta$-complexes
	and whose morphisms are morphisms of $\theta$-complexes.
	
	A morphism $f\in \Hom_{\mathcal{C}_{\theta}(A)}(X^\bullet,Y^\bullet)$ is
	called null-homotopic if there exists a sequence of morphisms
	$s^i:X^i\rightarrow Y^{i-1}$ such that
	\begin{align*}
		&s^{i+1}=\theta s^i,
		\quad f^i= d_Y^{i-1}s^i+s^{i+1} d_X^i , \quad \forall i\in\mathbb{Z}.
	\end{align*}
	A morphism is called a quasi-isomorphism if it induces an isomorphism on
	homology groups. Similarly to the case of $m$-periodic complexes, one can define
	the relative homotopy category $\mathcal{K}_{\theta}(A)$ and the relative
	derived category $\mathcal{D}_{\theta}(A)$. Moreover, both
	$\mathcal{K}_{\theta}(A)$ and $\mathcal{D}_{\theta}(A)$ are triangulated
	categories, with suspension functor induced by the shift functor of complexes;
	cf.~\cite{Hap88,PX97-Root-cat-simple-Lie,Sta18}.
	
	Let $\mathcal{C}^b(A)$ be the category of bounded complex of $\operatorname{mod} A$.
	We define an exact functor
	$\widetilde{\pi}_{\theta}:\mathcal{C}^b(A)\rightarrow \mathcal{C}_{\theta}(A)$
	as follows:
	\begin{align*}
		\widetilde{\pi}_{\theta}(X^{\bullet})
		&=
		\left(
		\bigoplus_{i\in\mathbb{Z}} X^{2i+j}
		\oplus
		\theta\bigl(X^{2i+1+j}\bigr),
		\begin{bmatrix}
			d^{2i+j} & 0 \\
			0 & \theta\bigl(d^{2i+1+j}\bigr)
		\end{bmatrix}_{i\in\mathbb{Z}}
		\right)_{j\in\mathbb{Z}}, \\
		\widetilde{\pi}_{\theta}(f)
		&=
		\left(
		\begin{bmatrix}
			f^{2i+j} & 0 \\
			0 & \theta\bigl(f^{2i+1+j}\bigr)
		\end{bmatrix}_{i\in\mathbb{Z}}
		\right)_{j\in\mathbb{Z}},
	\end{align*}
	where $X^\bullet\in \mathcal{C}^b(A)$ and
	$f\in \Hom_{\mathcal{C}^b(A)}(X^\bullet,Y^\bullet)$. The functor
	$\widetilde{\pi}_{\theta}$ induces a triangulated functor
	$\mathcal{K}^b(A)\rightarrow \mathcal{K}_{\theta}(A),$
	and hence a triangulated functor $\overline{\pi}_{\theta}:\mathcal{D}^b(A)\rightarrow \mathcal{D}_{\theta}(A)$ between derived categories. We have the following result; see also \cite[Theorem 2.12]{LR24}.
	
	\begin{lemma}\label{lem:pi-theta-fully-fathful}
		There exists a triangulated functor
		$\pi_{\theta}:\mathcal{M}_{\theta}\rightarrow \mathcal{D}_{\theta}(A)$
		such that the following diagram commutes:
		\[
		\begin{tikzcd}
			\mathcal{D}^{b}(A)
			\arrow[r, "\overline{\pi}_{\theta}"]
			\arrow[d, "\mathrm{can.}"']
			&
			\mathcal{D}_{\theta}(A)
			\\
			\mathcal{M}_{\theta}.
			\arrow[ur, "\pi_{\theta}"']
			&
		\end{tikzcd}
		\]
		Moreover, the functor $\pi_{\theta}$ is fully faithful.
	\end{lemma}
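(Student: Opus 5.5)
We follow the strategy of Peng--Xiao for root categories and of \cite[Theorem 2.12]{LR24} for $m$-periodic complexes, adapted to $\theta$-complexes. The idea is to realize $\mathcal{D}_\theta(A)$ as a localization of the derived category $\mathcal{D}\mathcal{B}$ of the dg orbit category, and to check that the induced functor from $\mathcal{M}_\theta=\operatorname{per}(\mathcal{B})$ is fully faithful on the representables $\pi_*X$ for $X\in\mathcal{D}^b(A)$.

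\emph{Step 1: construct $\pi_\theta$.} We will construct a dg functor from $\mathcal{B}$ to the dg category of $\theta$-complexes of (projective) $A$-modules. A bounded complex $P$ of projectives goes to $\widetilde{\pi}_\theta(P)$, which is the $G$-orbit sum $\bigoplus_p G^pP$ rearranged as a $\theta$-complex. A morphism in $\mathcal{A}(\mathcal{F}^nX,\mathcal{F}^pY)$ is sent to the induced morphism between orbit sums. This is compatible with the colimit defining $\mathcal{B}(X,Y)$, because $\mathcal{F}$ acts on $\widetilde{\pi}_\theta(P)$ by a canonical isomorphism given by shifting the index $j$ and applying $\theta$, using $\theta^2=\id$. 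Taking derived categories and restricting to perfect objects, we obtain a triangulated functor $\pi_\theta:\mathcal{M}_\theta\to\mathcal{K}_\theta(\operatorname{proj}A)\to\mathcal{D}_\theta(A)$. Commutativity of the diagram holds by construction on objects of $\mathcal{D}^b(A)$, since there $\pi_\theta\circ\mathrm{can}$ agrees with $\overline{\pi}_\theta$ computed on projective resolutions. This uses the finite global dimension of $A$.

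\emph{Step 2: full faithfulness on generators.} Since $\mathcal{M}_\theta$ is the thick hull of the image of $\mathcal{D}^b(A)$ and $\pi_\theta$ is triangulated, the five lemma and a dévissage argument reduce the problem to checking that
\[\Hom_{\mathcal{M}_\theta}(X,\Sigma^kY)\to\Hom_{\mathcal{D}_\theta(A)}(\overline{\pi}_\theta X,\Sigma^k\overline{\pi}_\theta Y)\]
is bijective for $X,Y\in\mathcal{D}^b(A)$. Closure under direct summands also needs care; it follows because fully faithful triangulated functors extend along idempotent completions. The left side is $\bigoplus_p\Hom_{\mathcal{D}^b(A)}(X,G^p\Sigma^kY)$. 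For the right side, we will prove an adjunction-type isomorphism
\[\Hom_{\mathcal{D}_\theta(A)}(\overline{\pi}_\theta X,Z)\cong\Hom_{\mathcal{D}(\Mod A)}(X,\iota Z),\]
where $\iota$ forgets the $\theta$-structure. We expect this to follow from the fact that $\widetilde{\pi}_\theta$ is left adjoint to the forgetful functor $\iota$ at the level of complexes and homotopies: a $\theta$-chain map out of $\widetilde{\pi}_\theta X$ is determined by its components in degree $0$ through $1$. Both functors preserve acyclicity and projectivity (bounded complexes of projectives are K-projective). Then $\iota\overline{\pi}_\theta Y\cong\bigoplus_pG^pY$, which is the analogue of \eqref{eq:pi-rho-pi-star-X}. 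Hence the right side is $\Hom(X,\bigoplus_pG^p\Sigma^kY)=\bigoplus_p\Hom(X,G^p\Sigma^kY)$, because $X$ is compact. It remains to check that the resulting bijection is the map induced by $\pi_\theta$, which is a naturality check.

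\emph{Main obstacle.} We expect the main difficulty to be Step 2's identification of morphisms in the relative derived category $\mathcal{D}_\theta(A)$. Localization could in principle create extra morphisms, so we must show that $\widetilde{\pi}_\theta$ of a bounded complex of projectives is homotopically projective in $\mathcal{K}_\theta(A)$, meaning that $\Hom_{\mathcal{K}_\theta}$ out of it kills acyclic $\theta$-complexes. We would prove this through the adjunction with $\iota$ at the homotopy level, where projectivity reduces to the ordinary statement in $\mathcal{K}(\Mod A)$. We must also verify that $\bigoplus_pG^pY$ lies in $\mathcal{D}(\mod A)$ degreewise, which uses finite global dimension.
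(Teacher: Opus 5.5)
Your proposal is correct and follows essentially the same route as the paper. Existence of $\pi_\theta$ comes from the compatibility $\overline{\pi}_\theta\circ G\simeq\overline{\pi}_\theta$ through the dg orbit category (the paper cites Keller's universal property in \S 9.4, whereas you build the induced dg functor by hand), full faithfulness is first checked on images of $\mathcal{D}^b(A)$ via $\Hom_{\mathcal{D}_\theta(A)}(\overline{\pi}_\theta X,\overline{\pi}_\theta Y)\cong\bigoplus_{p}\Hom_{\mathcal{D}^b(A)}(X,G^pY)$, and it is then extended to the thick hull by d\'evissage. Your argument via the adjunction $\widetilde{\pi}_\theta\dashv\iota$ and K-projectivity of $\widetilde{\pi}_\theta$ of bounded projective complexes is a sound justification of that Hom formula, which the paper simply asserts holds ``by definition.''
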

	
	\begin{proof}
		By definition, there is a natural isomorphism of functors
		$\overline{\pi}_{\theta}\circ G \simeq \overline{\pi}_{\theta}$. Hence \cite[\S 9.4]{Ke05-tri-orb-cat} implies that
		there exists a triangulated functor
		$\pi_{\theta}:\mathcal{M}_{\theta}\rightarrow \mathcal{D}_{\theta}(A)$
		such that the above diagram commutes.
		
		By definition, for any $X,Y\in \mathcal{D}^b(A)$, one has
		\[
		\bigoplus_{p\in \mathbb{Z}}\Hom_{\mathcal{D}^b(A)}(X,G^pY)\cong \Hom_{\mathcal{D}_\theta(A)}(\bar{\pi}_\theta(X),\bar{\pi}_\theta(Y)).
		\] 
		It follows directly 
		from the commutative diagram above that the restriction of $\pi_{\theta}$ to
		representable functors in $\mathcal{M}_\theta$ is fully faithful. Consequently, $\pi_{\theta}$ is
		fully faithful on $\mathcal{M}_{\theta}$. This completes the proof.
	\end{proof}
	For any object $X\in\mod A$, by the definition of the functor $\pi_\theta$, the image $\pi_\theta X$ can be represented as the following $\theta$-complex:
	\begin{align}\label{eq:pi-theta-object}
		\cdots\rightarrow \theta X \xrightarrow{d_M^{-1}=0}X\xrightarrow{d_M^0=0} \theta X\rightarrow \cdots.
	\end{align}
	In general, let
	$f\in \Hom_{\mathcal{M}_{\theta}}(X,Y)$
	be a morphism in $\mathcal{M}_{\theta}$. If $\pi_{\theta}X$ is chosen to be
	a $\theta$-complex of projective modules, or alternatively if
	$\pi_{\theta}Y$ is chosen to be a $\theta$-complex of injective modules,
	then $\pi_{\theta}f:\pi_\theta X\rightarrow \pi_\theta Y$ can be represented by the following chain map:
	\begin{align}\label{eq:pi-theta-X-2-periodic-complex}
		\begin{split}
			\xymatrix{
				\cdots &
				{X^{-1}=\theta X^0}\ar[d]_{f^{-1}=\theta f^0}
				\ar[rrr]^{d_X^{-1}=\theta d_X^0}
				&&&
				{X^0}\ar[d]_{f^0}\ar[rr]^{d_X^0}
				&&
				{X^1=\theta X^0}\ar[d]^{f^1=\theta f^0}
				& \cdots \\
				\cdots &
				{Y^{-1}=\theta Y^0}
				\ar[rrr]^{d_Y^{-1}=\theta d_Y^0}
				&&&
				Y^0\ar[rr]^{d_Y^0}
				&&
				{Y^1=\theta Y^0}
				& \cdots.
			}
		\end{split}
	\end{align}

	The functor $\theta\colon \mathcal{D}^b(A)\rightarrow \mathcal{D}^b(A)$ can be naturally extended to the category $\mathcal{M}_\theta$.
	
	\begin{lemma}\label{lem:theta-in-M}
		The functor $\theta:\mathcal{D}^b(A)\rightarrow \mathcal{D}^b(A)$ induces an involution of $\mathcal{M}_\theta$, again denoted by $\theta$. Furthermore, we have a natural isomorphism of functors
		$\theta \cong \Sigma:\mathcal{M}_\theta \rightarrow \mathcal{M}_\theta .$
	\end{lemma}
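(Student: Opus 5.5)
The plan is to extend $\theta$ through the dg orbit construction and then compare it with the suspension.

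\emph{Step 1: extend $\theta$ to the dg level.} Since $\theta$ is induced by an algebra automorphism, it acts on the dg category $\mathcal{A}$ of bounded complexes of finitely generated projective $A$-modules. Explicitly, $\theta$ sends a complex $P$ to $P\otimes_A A^{\theta}$. This gives a strict dg automorphism $\vartheta:\mathcal{A}\to\mathcal{A}$ with $\vartheta^2=\mathrm{id}$.

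The dg lift $\mathcal{G}$ of $G=\theta\circ\Sigma$ can be chosen as $\vartheta\circ\mathcal{S}$, where $\mathcal{S}$ is the strict shift. Then $\vartheta$ commutes strictly with $\mathcal{G}$. So $\vartheta$ induces a dg automorphism $\bar\vartheta$ of the dg orbit category $\mathcal{B}$, acting on morphism complexes by
\[
\colim_p\bigoplus_{n\ge0}\mathcal{A}(\mathcal{G}^nX,\mathcal{G}^pY)\to \colim_p\bigoplus_{n\ge0}\mathcal{A}(\mathcal{G}^n\vartheta X,\mathcal{G}^p\vartheta Y).
\]
Restriction and induction along $\bar\vartheta$ give a triangle autoequivalence of $\mathcal{DB}$. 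It sends representables to representables, so it preserves $\operatorname{per}(\mathcal{B})=\mathcal{M}_\theta$. It is involutive because $\bar\vartheta^2=\mathrm{id}$, and by construction it is compatible with the canonical embedding $\mathcal{D}^b(A)/G\to\mathcal{M}_\theta$.

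\emph{Step 2: show $\theta\cong\Sigma$ on $\mathcal{M}_\theta$.} In $\mathcal{B}$ there is a canonical closed degree-zero isomorphism $X\to\mathcal{G}X=\vartheta\mathcal{S}X$, namely the identity of $\mathcal{G}X$ viewed in the $p=1$ component. These isomorphisms are natural in $X$. They give an isomorphism of dg functors $\mathrm{id}_{\mathcal{B}}\cong\bar{\mathcal{G}}$, where $\bar{\mathcal{G}}$ is the functor on $\mathcal{B}$ induced by $\mathcal{G}$.

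Passing to $\mathcal{DB}$ gives $\mathrm{id}\cong\theta\circ\Sigma$ on $\mathcal{M}_\theta$. Here one uses that the shift of $\mathcal{A}$ induces the suspension on representables, hence on $\operatorname{per}\mathcal{B}$. Composing with $\theta$ and using $\theta^2\cong\mathrm{id}$ then yields $\theta\cong\Sigma$.

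\emph{Alternative route and main obstacle.} One could instead argue through Lemma \ref{lem:pi-theta-fully-fathful}. On $\mathcal{D}_\theta(A)$, applying $\theta$ termwise to a $\theta$-complex $(X^i,d^i)$ gives $(X^{i+1},d^{i+1})$, which is exactly the shift up to the sign convention on differentials. This yields $\theta\cong\Sigma$ on $\mathcal{D}_\theta(A)$, and full faithfulness of $\pi_\theta$ would transport it. However, that requires first knowing that $\theta$ on $\mathcal{M}_\theta$ is intertwined by $\pi_\theta$, so I would keep this only as a consistency check.

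I expect the main obstacle to be the bookkeeping at the dg level. One must check that the chosen lift $\mathcal{G}$ strictly commutes with $\vartheta$ so that $\bar\vartheta$ is well defined on the colimit. One must also check that the isomorphisms $X\cong\mathcal{G}X$ are natural with the correct signs, since the shift introduces a sign on differentials, which can be absorbed by the automorphism $-1$ on odd components. I would verify both directly on the explicit formula for $\mathcal{B}(X,Y)$.
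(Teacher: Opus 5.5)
Your proposal is correct and follows essentially the same route as the paper. Both arguments lift $\theta$ to a dg automorphism of $\mathcal{A}$ that commutes with the lift of $G$, let it act on the dg orbit category $\mathcal{B}$ and hence on $\operatorname{per}(\mathcal{B})=\mathcal{M}_\theta$, and deduce $\theta\cong\Sigma$ from the fact that $\mathcal{G}$ becomes isomorphic to the identity in $\mathcal{B}$. The paper packages that last step as the bimodule isomorphism $\mathcal{B}(X,\theta Y)\cong\Sigma\mathcal{B}(X,Y)$ together with Keller's Lemma~6.1, which carries the same content as your natural isomorphism $\mathrm{id}_{\mathcal{B}}\cong\bar{\mathcal{G}}$ followed by composing with $\theta$.
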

	\begin{proof}
		Since the functor $\theta:\mathcal{D}^b(A)\rightarrow \mathcal{D}^b(A)$ commutes with $G$, it induces an involution
		$\theta:\mathcal{D}^b(A)/G\rightarrow \mathcal{D}^b(A)/G.$
		Moreover, this involution can be extended to $\mathcal{M}_\theta$. More precisely, the functor
		$\theta:\mathcal{D}^b(A)\rightarrow \mathcal{D}^b(A)$
		admits a dg lift
		$\Theta:\mathcal{A}\rightarrow \mathcal{A},$
		which in turn naturally induces a dg autoequivalence
		$\widetilde{\Theta}:\mathcal{B}\rightarrow \mathcal{B}.$
		Consequently, one obtains an involution
		$\theta:\mathcal{D}\mathcal{B}\rightarrow \mathcal{D}\mathcal{B}$
		on the derived category, and this further restricts to a functor
		$\theta:\mathcal{M}_\theta\rightarrow \mathcal{M}_\theta.$
		It is straightforward to verify that the restriction of the functor $\theta$ on $\mathcal{M}_\theta$ to $\mathcal{D}^b(A)/G$ coincides with the original involution $\theta$ on $\mathcal{D}^b(A)/G$. Therefore, we shall repeatedly use the same notation $\theta$ for these functors without causing ambiguity.
		
		On the other hand, the functor
		$\theta:\mathcal{M}_\theta\rightarrow\mathcal{M}_\theta$
		coincides with the standard equivalence induced by the $\mathcal{B}\otimes \mathcal{B}^{op}$-bimodule
		$\mathcal{B}(X,\theta Y)=\Sigma\mathcal{B}(X,Y).$
		Hence there is a functor isomorphism
		$\theta\cong \Sigma:\mathcal{M}_\theta\rightarrow\mathcal{M}_\theta$ by \cite[Lemma 6.1]{Ke94-dg-cat}. 
	\end{proof}
	
	To construct the Ringel--Hall Lie algebra of $\mathcal{M}_\theta$, we need to
	establish that $\mathcal{M}_\theta$ is a $\Hom$-finite
	$2$-periodic triangulated category. This is the content of the following
	proposition.
	
	\begin{proposition}\label{pro:M_theta-2-periodic}
		Let $A$ be a finite-dimensional $\mathbb{F}$-algebra of finite global
		dimension equipped with an involution $\theta$. Then the triangulated hull
		$\mathcal{M}_{\theta}$ defined above is a $\Hom$-finite $2$-periodic
		triangulated category. 
	\end{proposition}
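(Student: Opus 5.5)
The proof splits into three parts: $\Hom$-finiteness, the $2$-periodicity $\Sigma^2\cong\id$, and the Krull--Schmidt property. All three will be deduced from Lemma~\ref{lem:pi-theta-fully-fathful} and Lemma~\ref{lem:theta-in-M}, with the fully faithful functor $\pi_\theta\colon\mathcal{M}_\theta\to\mathcal{D}_\theta(A)$ used to transport questions to $\theta$-complexes.

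\textbf{Periodicity.} By Lemma~\ref{lem:theta-in-M} we have $\Sigma\cong\theta$ on $\mathcal{M}_\theta$, hence
\[\Sigma^2\cong\theta^2\cong\id_{\mathcal{M}_\theta},\]
since $\theta$ is an involution. This part is immediate.

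\textbf{$\Hom$-finiteness.} First take $X,Y\in\mathcal{D}^b(A)$. Their morphism space in the orbit category is $\bigoplus_{p\in\mathbb{Z}}\Hom_{\mathcal{D}^b(A)}(X,G^pY)$, with
\[G^pY=\theta^p\Sigma^pY.\]
Because $A$ has finite global dimension, $\Hom_{\mathcal{D}^b(A)}(X,\Sigma^p Z)=0$ for $|p|\gg0$, uniformly over the finitely many modules $Z$ among the cohomologies of $\theta^{p}Y$ (there are at most two twists of each). Each summand is finite-dimensional since $\mathcal{D}^b(A)$ is $\Hom$-finite. So only finitely many terms survive, and the morphism space is finite-dimensional.

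Since $\mathcal{M}_\theta$ is the thick closure of the image of $\mathcal{D}^b(A)/G$, the full subcategory of objects $X$ with $\Hom(X,Y)$ finite-dimensional for all $Y\in\mathcal{D}^b(A)/G$ is closed under cones, shifts and summands. Thus it is everything, and a second induction in the other variable gives $\Hom$-finiteness on all of $\mathcal{M}_\theta$. Alternatively, one can use $\pi_\theta$ and the observation already made that $\pi_\rho X\in\mathcal{D}(\mod A)$, computing homs via $\Hom_{\mathcal{D}A}(X,\pi_\rho Y)$ with $\pi_\rho Y$ having finite-dimensional cohomology in each degree and only finitely many degrees contributing.

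\textbf{Krull--Schmidt.} A $\Hom$-finite additive category over a field is Krull--Schmidt once idempotents split, since endomorphism rings are then finite-dimensional and hence semiperfect. The category $\mathcal{M}_\theta=\operatorname{per}(\mathcal{B})$ is a thick subcategory of $\mathcal{DB}$. The ambient $\mathcal{DB}$ has arbitrary coproducts and so is idempotent complete (Bökstedt--Neeman), and thick subcategories are closed under direct summands. Hence idempotents split in $\mathcal{M}_\theta$, and it is Krull--Schmidt.

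The main obstacle is the $\Hom$-finiteness step, specifically making the vanishing of $\Hom(X,G^pY)$ for large $|p|$ precise for arbitrary bounded complexes and then passing cleanly from the orbit category to its triangulated hull. I would handle the first by induction on the lengths of $X$ and $Y$, reducing to modules, and the second by the thick-subcategory closure argument above, rather than by relying on $\pi_\theta$.
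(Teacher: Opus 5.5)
Your proof is correct and is essentially the argument the paper imports from \cite[Proposition~2.2]{Fu12}: you get $\Sigma^2\cong\theta^2\cong\id$ from Lemma~\ref{lem:theta-in-M}, $\Hom$-finiteness of $\mathcal{D}^b(A)/G$ from finite global dimension (only finitely many $p$ with $\Hom_{\mathcal{D}^b(A)}(X,\theta^p\Sigma^pY)\neq 0$) extended to the thick closure $\operatorname{per}(\mathcal{B})$ by d\'evissage in each variable, and the Krull--Schmidt property from $\Hom$-finiteness plus idempotent completeness of $\operatorname{per}(\mathcal{B})\subseteq\mathcal{DB}$. One small point to state explicitly: the d\'evissage in the first variable needs the image of $\mathcal{D}^b(A)$ to be stable under $\Sigma^{\pm1}$, which holds because the canonical functor $\mathcal{D}^b(A)\to\mathcal{M}_\theta$ is triangulated.
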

	
	\begin{proof}
		See \cite[Proposition~2.2]{Fu12}. Although the corresponding result is proved
		there for the root category, the same argument applies in the present setting.
	\end{proof}
	
	For any object $X\in \mathcal{D}^b(A)$, we shall still denote by $X$ its image under the canonical embedding
	$\mathcal{D}^b(A)/G \rightarrow \mathcal{M}_\theta$.
	We have the following description of indecomposable objects in $\mathcal{M}_\theta$.
	
	\begin{lemma}
		Let $X\in \mathcal{D}^b(A)$ be an indecomposable object. Then its image
		$X\in \mathcal{D}^b(A)/G \subseteq \mathcal{M}_\theta$
		is still indecomposable. In particular, if the algebra $A$ is hereditary, then $\mathcal{D}^b(A)/G \simeq \mathcal{M}_\theta$, and the indecomposable objects of $\mathcal{M}_\theta$ are, up to isomorphism, precisely the images of the indecomposable objects of $\mathcal{D}^b(A)$.
	\end{lemma}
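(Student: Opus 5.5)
The plan is to show that the endomorphism ring of $X$ in $\mathcal{M}_\theta$ is local, working through the orbit category description. By construction,
\[
\End_{\mathcal{M}_\theta}(X)\cong\bigoplus_{p\in\mathbb{Z}}\Hom_{\mathcal{D}^b(A)}(X,G^pX).
\]
This is a finite-dimensional algebra graded by $\mathbb{Z}$. Hom-finiteness comes from Proposition~\ref{pro:M_theta-2-periodic}; alternatively, for bounded complexes and $A$ of finite global dimension only finitely many $p$ contribute, since $G^p=\theta^p\Sigma^p$ shifts the complex far away. The degree-$0$ part is $\End_{\mathcal{D}^b(A)}(X)$, which is local because $X$ is indecomposable and $\mathcal{D}^b(A)$ is Krull--Schmidt.

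Next I would show that the homogeneous components of degree $p\neq 0$ lie in the radical. Take $f\in\Hom(X,G^pX)$ with $p\neq0$. Any power of $f$ is a composite $X\to G^{p}X\to G^{2p}X\to\cdots$, that is, an element of $\Hom(X,G^{kp}X)=\Hom(X,\theta^{kp}\Sigma^{kp}X)$. For $k$ large this vanishes, because $X$ and $\Sigma^{kp}\theta^{kp}X$ are bounded complexes whose cohomological supports separate, and $A$ has finite global dimension. Hence the ideal $J:=\rad\End(X)\oplus\bigoplus_{p\neq0}\Hom(X,G^pX)$ is nilpotent. Here I will use the standard argument for positively or negatively graded pieces: products of elements of nonzero degree either shift the degree further or land in degree $0$. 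In the latter case, nilpotency of the composite in degree $0$ must be argued separately, and I would do this via Fitting's lemma for $X$, since a composite $X\to G^pX\to X$ that is invertible would make $X$ a summand of $G^pX$ shifted back, which is impossible for $p\ne 0$ by the support argument. The quotient by $J$ is the division ring $\End(X)/\rad$. Hence the endomorphism ring is local and $X$ is indecomposable in $\mathcal{M}_\theta$. I expect this step, showing that $J$ is a genuine two-sided nilpotent ideal despite the mixing of degrees, to be the main obstacle. The cleanest route is to note that for any $g$ of nonzero degree $p$, $gh$ with $h$ of degree $-p$ cannot be invertible, so $J$ consists of non-units. Since the algebra is finite-dimensional, the non-units of $\End(X)$ then form the ideal $J$.

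For the hereditary case I would verify Keller's hypotheses in Theorem~\ref{Thm: tri-orb-cat} with $F=G$. Indecomposables of $\mathcal{D}^b(A)$ are stalk complexes $\Sigma^mU$ with $U\in\operatorname{ind}A$. Then $G^iU=\Sigma^i\theta^iU$ lies in $\mod A$ only for $i=0$, and applying $G^{-m}$ shows that every orbit contains $\theta^{m}U\in\mod A$, so $N=0$ works. Thus $\mathcal{D}^b(A)/G$ is triangulated and the embedding into $\mathcal{M}_\theta$ is an equivalence. Since $\mathcal{D}^b(A)/G$ has the same objects as $\mathcal{D}^b(A)$, every object is a finite sum of images of indecomposables, which by the first part are indecomposable. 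Krull--Schmidt then identifies the indecomposables of $\mathcal{M}_\theta$ with these images.
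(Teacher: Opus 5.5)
Your proposal is correct and is essentially the paper's argument. You show that $\rad\End_{\mathcal{D}^b(A)}(X)\oplus\bigoplus_{p\neq 0}\Hom_{\mathcal{D}^b(A)}(X,G^pX)$ is the radical of $\End_{\mathcal{D}^b(A)/G}(X)$, so the quotient is the division ring $\End_{\mathcal{D}^b(A)}(X)/\rad\End_{\mathcal{D}^b(A)}(X)$, and then invoke Keller's Theorem~\ref{Thm: tri-orb-cat} for density when $A$ is hereditary.

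You in fact justify what the paper merely asserts: degree-zero composites through $G^pX$ are non-invertible because $X\not\cong G^pX$ by cohomological support, and Keller's hypotheses hold with $N=0$. Just make sure locality is derived from the nilpotency of $J$ (bounded degrees, pigeonhole on partial degree sums, and nilpotency of $\rad\End_{\mathcal{D}^b(A)}(X)$). The observation that $J$ consists of non-units does not suffice on its own.
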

	
	\begin{proof}
		To prove the first statement, it suffices to show that for every indecomposable object
		$X \in \mathcal{D}^b(A)$, the endomorphism algebra of $X$ in the orbit category
		$\mathcal{D}^b(A)/G$ is local; see \cite[Proposition~1.2]{BMRRT}. By the definition of the orbit category, we have
		\[
		\End_{\mathcal{D}^b(A)/G}(X)
		=
		\bigoplus_{i\in \mathbb{Z}}
		\Hom_{\mathcal{D}^b(A)}(X,G^iX).
		\]
		Moreover, its radical is given by
		\[
		\rad_{\mathcal{D}^b(A)/G}(X,X)
		=
		\rad_{\mathcal{D}^b(A)}(X,X)
		\oplus
		\bigoplus_{0\neq i\in \mathbb{Z}}
		\Hom_{\mathcal{D}^b(A)}(X,G^iX).
		\]
		Hence
		\[
		\End_{\mathcal{D}^b(A)/G}(X)/
		\rad_{\mathcal{D}^b(A)/G}(X,X)
		\cong
		\End_{\mathcal{D}^b(A)}(X)/
		\rad_{\mathcal{D}^b(A)}(X,X)
		\]
		is a division algebra, which implies the first statement.
		
		Now assume that $A$ is hereditary. By Theorem~\ref{Thm: tri-orb-cat}, the canonical
		embedding
		$\mathcal{D}^b(A)/G \rightarrow \mathcal{M}_\theta$
		is dense. Therefore, for any object $X\in \mathcal{M}_\theta$, there exists an object
		$X'\in \mathcal{D}^b(A)/G$ such that $X\cong X'$ in $\mathcal{M}_\theta$.
		Thus every indecomposable object of $\mathcal{M}_\theta$ is isomorphic to the image of
		an object of $\mathcal{D}^b(A)/G$.
		
		By the first part, the image of every indecomposable object of $\mathcal{D}^b(A)$ is
		indecomposable in $\mathcal{D}^b(A)/G$, and hence in $\mathcal{M}_\theta$. Conversely,
		if an object of $\mathcal{D}^b(A)$ is decomposable, then so is its image in the orbit
		category. Therefore the indecomposable objects of $\mathcal{M}_\theta$ are, up to
		isomorphism, precisely the images of the indecomposable objects of
		$\mathcal{D}^b(A)$.
	\end{proof}

	We denote the Grothendieck group of $\mod A$ and $\mathcal{D}^b(A)$ simply by $\go(A)$ and $\go(\mathcal{D})$ respectively.
	And now we assume that the functor $\theta:\mod A\rightarrow \mod A$ has no fixed simple modules. Since $\theta$ is an involution and it preserves simple modules, the
	simple $A$-modules split into two-element $\theta$-orbits. Thus, we may choose
	one representative $S_i$ from each such orbit, say $S_1,\dots,S_n$, so that
	$\{S_1,\dots,S_n\}\cup \{\theta S_1,\dots,\theta S_n\}$
	is a complete set of representatives of the isomorphism classes of simple
	$A$-modules. The following proposition describes the Grothendieck group $\go(\mathcal{M}_\theta)$ in this case.
	
	\begin{lemma}\label{lem:basis-go-M}
		Assume that $\theta S\ncong S$ for every simple $A$-module $S$. Let
		$S_1,\dots,S_n$ be chosen as above. Then the Grothendieck group
		$\go(\mathcal{M}_\theta)$ is a free abelian group with a basis
		$h_{S_1},\ldots,h_{S_n}.$
	\end{lemma}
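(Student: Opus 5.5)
We must show that $\go(\mathcal{M}_\theta)$ is free with basis $h_{S_1},\ldots,h_{S_n}$. The plan is to prove spanning from relations forced by the triangulated structure, and to prove independence by constructing an additive invariant on $\mathcal{M}_\theta$ that detects the $h_{S_i}$.

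\emph{Spanning.} $\mathcal{M}_\theta$ is the thick closure of the image of $\mathcal{D}^b(A)$, so $\go(\mathcal{M}_\theta)$ is generated by the classes $h_X$ with $X\in\mathcal{D}^b(A)$. The canonical functor $\mathcal{D}^b(A)\to\mathcal{M}_\theta$ is triangulated, so it induces a homomorphism $\go(\mathcal{D})=\go(A)\to\go(\mathcal{M}_\theta)$. Since $A$ is finite-dimensional, $\go(A)$ is spanned by the simple modules, so $\go(\mathcal{M}_\theta)$ is spanned by $h_{S_i}$ and $h_{\theta S_i}$. In $\mathcal{M}_\theta$ we have $\theta S_i\cong G\Sigma^{-1}S_i\cong \Sigma^{-1}S_i$, because $G\cong\id$ in the orbit category (alternatively, use Lemma~\ref{lem:theta-in-M}). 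Hence $h_{\theta S_i}=h_{\Sigma^{-1}S_i}=-h_{S_i}$, and the $h_{S_i}$ span.

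\emph{Independence.} I would use the fully faithful triangulated functor $\pi_\theta:\mathcal{M}_\theta\to\mathcal{D}_\theta(A)$ of Lemma~\ref{lem:pi-theta-fully-fathful}, together with a homology invariant. Because $\pi_\theta$ is triangulated, a triangle $X\to Y\to Z\to\Sigma X$ in $\mathcal{M}_\theta$ gives a triangle of $\theta$-complexes and hence a long exact homology sequence. That sequence is $\theta$-twisted periodic: $H^{i+1}=\theta H^i$. Define
\[
\chi(X):=\underline{\dim}\,H^0(\pi_\theta X)\in \go(A)\big/\langle \hat S+\widehat{\theta S}\rangle .
\]
The quotient $\go(A)/\langle \hat S+\widehat{\theta S}\rangle$ is free with basis the images of $\hat S_1,\ldots,\hat S_n$, precisely because $\theta$ fixes no simple; this is exactly where the hypothesis is used. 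Set $\bar\theta$ to be the map induced by $\theta$ on $\go(A)$; it acts as $-1$ on the quotient.

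The next step is to check that $\chi$ is additive on triangles. Cut the long exact sequence at degree $0$; the exactness from $H^0(X)$ around to $H^1(X)=\theta H^0(X)$ gives an alternating-sum identity. Using $H^1=\theta H^0$ and $\bar\theta=-1$ in the quotient, this should reduce to $\chi(Y)=\chi(X)+\chi(Z)$ modulo the relations. This is the step I expect to be the main obstacle. The exact sequence is infinite, and one must fold it carefully, perhaps by viewing it as a $\mathbb{Z}/2$-graded exact hexagon after twisting by $\theta$. The sign bookkeeping must produce additivity rather than additivity only up to $2$-torsion. If folding turns out to only give a congruence, I would instead define $\chi$ via $\pi_\rho$, since $\pi_\rho X\in\mathcal{D}(\mod A)$ is a bounded-homology direct sum $\bigoplus_p G^pX$, and take the Euler characteristic of a fundamental piece modulo the same relations.

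Once additivity holds, $\chi$ descends to a homomorphism $\go(\mathcal{M}_\theta)\to\bigoplus_i\mathbb{Z}\bar S_i$. By \eqref{eq:pi-theta-object}, $\chi(S_i)=\bar S_i$, so $\chi$ sends the spanning set $h_{S_1},\ldots,h_{S_n}$ to a basis. This forces linear independence, and therefore freeness.
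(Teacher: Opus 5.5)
Your argument is essentially the paper's: spanning via $h_{\theta S_i}=-h_{S_i}$, and independence via an additive invariant obtained by folding the $2$-periodic cohomology sequence into a cyclic six-term exact sequence. The paper uses $\pi_\rho$ and $\phi(h_X)=\widehat{H^0(\pi_\rho X)}-\widehat{H^1(\pi_\rho X)}\in\go(A)$, which amounts to $(1-\bar\theta)$ applied to your $\chi$.

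The step you flag as the main obstacle closes at once. Write $H^kX$ for $H^k(\pi_\theta X)$ and cut the hexagon at $\ker\bigl(H^0X\to H^0Y\bigr)$. This gives the exact identity $(1-\bar\theta)\bigl(\widehat{H^0X}-\widehat{H^0Y}+\widehat{H^0Z}\bigr)=0$ in $\go(A)$. Since $\ker(1-\bar\theta)=\langle\hat S+\widehat{\theta S}\rangle$, $\chi$ is additive on the nose. Equivalently, in your quotient you get $2\bigl(\chi(X)-\chi(Y)+\chi(Z)\bigr)=0$ in a free group, so additivity up to $2$-torsion is already additivity and no fallback is needed. The paper's normalization $\widehat{H^0}-\widehat{H^1}$ simply avoids passing to the quotient.
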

	
	\begin{proof}
		The standard functor
		$\pi_*:\mathcal{D}A\rightarrow \mathcal{D}B$
		restricts to a functor
		$\pi_*:\mathcal{D}^b(A)\rightarrow \mathcal{M}_\theta .$
		It induces a surjective group homomorphism
		\begin{align*}
			\hat{\pi}:\go(\mathcal{D})&\longrightarrow \go(\mathcal{M}_\theta),\\
			\hat X&\longmapsto h_{X}.
		\end{align*}
		Since
		$\hat{S}_1,\ldots,\hat{S}_n,
		\hat{\theta S_1},\ldots,\hat{\theta S_n}$
		form a $\mathbb{Z}$-basis of
		$\go(\mathcal{D})=\go(A)$, it follows that
		$h_{S_1},\ldots,h_{S_n}$ and $
		h_{\theta S_1},\ldots,h_{\theta S_n}$
		generate $\go(\mathcal{M}_\theta)$.
		
		On the other hand, by Lemma~\ref{lem:theta-in-M}, we have
		$\Sigma S_i\cong \theta S_i$
		in $\mathcal{M}_\theta$ for each $1\leq i\leq n$. Hence, in the
		Grothendieck group $\go(\mathcal{M}_\theta)$, one has $h_{\theta S_i}=h_{\Sigma S_i}=-h_{S_i}.$
		Therefore, $\go(\mathcal{M}_\theta)$ is generated by
		$h_{S_1},\ldots,h_{S_n}.$
		It remains to prove that these generators are $\mathbb{Z}$-linearly independent.
		
		For any $X\in \mathcal{M}_\theta$, consider
		$\pi_\rho X\in \mathcal{D}(\mod A).$ We denote its zeroth and first cohomology groups by
		$H^0(\pi_\rho X)\quad \text{and}\quad H^1(\pi_\rho X)$ respectively. Then we claim that the assignment
		\begin{align*}
			\phi:\go(\mathcal{M}_\theta)&\longrightarrow \go(\mathcal{D}),\\
			h_X&\longmapsto \widehat{H^0(\pi_\rho X)}-\widehat{H^1(\pi_\rho X)}
		\end{align*}
		is a well-defined group homomorphism.
		
		If this is the case, we have
		\[
		\phi(h_{S_i})=H^0(\pi_\rho S_i)-H^1(\pi_\rho S_i)=H^0(\bigoplus_{p\in \mathbb{Z}} G^p S_i)-H^1(\bigoplus_{p\in \mathbb{Z}} G^p S_i)=
		\hat{S}_i-\hat{\theta S_i}
		\]
		for $1\leq i\leq n$, and the desired result follows.
		
		To show that $\phi$ is well-defined, it suffices to show that $\phi(h_X+h_Z-h_Y)=0$ for any triangle 
		\begin{equation}\label{eq:triangle}
			X\longrightarrow Y\longrightarrow Z\longrightarrow \Sigma X
		\end{equation}
		in $\mathcal{M}_\theta$.
		By applying $\pi_\rho$ to the triangle \eqref{eq:triangle}, we obtain  a triangle in $\mathcal{D}(\mod A)$
		\begin{equation}\label{eq:triangle-DA}
			\pi_\rho X\longrightarrow \pi_\rho Y\longrightarrow \pi_\rho Z\longrightarrow  
			\Sigma (\pi_\rho X).
		\end{equation}
		This induces a long exact sequence in cohomology. In particular, we have
		\begin{align*}
			\operatorname{coker}\bigl( H^{1}(\pi_\rho Y)\rightarrow H^{1}(\pi_\rho Z) \bigr)
			&\cong
			\ker \bigl( H^{2}(\pi_\rho X)\rightarrow H^{2}(\pi_\rho Y) \bigr)\\
			&\cong
			\ker \bigl( H^{0}(\Sigma^2\pi_\rho X)\rightarrow H^{0}(\Sigma^2\pi_\rho Y) \bigr).
		\end{align*}
		Since $\pi_\rho$ is a triangulated functor and $\mathcal{M}_\theta$ is a
		$2$-periodic triangulated category, there are functorial isomorphisms
		$\Sigma^2\circ \pi_\rho \simeq \pi_\rho\circ \Sigma^2 \simeq \pi_\rho.$
		Hence
		\[
		\operatorname{coker}\bigl( H^{1}(\pi_\rho Y)\rightarrow H^{1}(\pi_\rho Z) \bigr)
		\cong
		\ker \bigl( H^{0}(\pi_\rho X)\rightarrow H^{0}(\pi_\rho Y) \bigr).
		\]
		Thus, we obtain the following long exact sequence in $\mod A$:
		\[
		\begin{tikzcd}[
			column sep=0.8cm, 
			row sep=0.8cm,
			arrows={-Stealth, thick}
			]
			\ker f \arrow[r, tail] & 
			H^{0} (\pi_\rho X) \arrow[r,"f"] & 
			H^{0} (\pi_\rho Y) \arrow[r] & 
			H^{0} (\pi_\rho Z) \arrow[lld, out=0, in=-180, looseness=1.5] \\
			& 
			H^{1} (\pi_\rho X) \arrow[r] & 
			H^{1} (\pi_\rho Y) \arrow[r] & 
			H^{1} (\pi_\rho Z) \arrow[r, two heads] & 
			\ker f.
		\end{tikzcd}
		\]
		It follows that 
		\begin{align*}
			\widehat{H^0(\pi_\rho X)}-\widehat{H^1(\pi_\rho X)}+\widehat{H^0(\pi_\rho Z)}-\widehat{H^1(\pi_\rho Z)} 
			-\widehat{H^0(\pi_\rho Y)}+\widehat{H^1(\pi_\rho Y)}=0.
		\end{align*}
		Equivalently,
		$\phi(h_X+h_Z-h_Y)=0.$
		Thus, $\phi$ is well-defined, and the proof is complete.
	\end{proof}

	\section{GIM algebras via Ringel--Hall Lie algebras}\label{s:GIM-RH-Lie}
	
	In this section, given any symmetrizable GIM
	$C=(c_{ij})\in M_{n\times n}(\mathbb{Z})$ with symmetrizer
	$D=\operatorname{diag}(d_1,\ldots,d_n)$, we construct a valued quiver
	$(Q,\mathbf d)$ equipped with an involution $\theta$ associated with $C$.
	We then introduce a corresponding $2$-periodic triangulated category and
	realize the associated symmetrizable GIM algebra via the Ringel--Hall Lie
	algebra of this category.
	
	\subsection{GIM algebras}\label{ss:GIM algebra} In this subsection, we recall the definition of the generalized intersection matrix algebra and the classical result \cite{B89-GIM-alg} that a GIM algebra can be embedded into a Kac--Moody algebra.
	
	A matrix ${C}=(c_{ij})_{n\times n}\in M_{n\times n}(\mathbb{Z})$ is a {\em generalized intersection matrix}  if it satisfies the following:
	\begin{itemize}
		\item[](GIM1) $c_{ii}=2$ for all $i=1,\ldots,n$;
		\item[](GIM2) $c_{ij}< 0$ if and only if $c_{ji}<0$;
		\item[](GIM3) $c_{ij}>0$ if and only if $c_{ji}>0$.
	\end{itemize}
	A GIM $C\in M_{n\times n}(\mathbb{Z})$ is {\em symmetrizable} if there is a diagonal matrix $D=\operatorname{diag}(d_1,\ldots,d_n)$ with positive integers $d_1,\ldots,d_n$ such that $DC$ is symmetric. In this case, $D$ is called a {\em symmetrizer} of $C$.
	It is clear that a GCM is a GIM, but the converse is not true.
	
	For a given GIM $C=(c_{ij})\in M_{n\times n}(\mathbb{Z})$, the {\em GIM algebra} ${\rm gim}(C)$ associated with $C$ is the complex Lie algebra generated by $\{\widetilde{e}_i,\widetilde{f}_i,\widetilde{h}_i\mid 1\le i\le n\}$ subject to the following defining relations:
	\begin{align}
		&[\widetilde{h}_{i}, \widetilde{h}_{j}]=0,\quad 1 \leq i, j \leq n;\label{rel:gim-1}\\
		&[\widetilde{h}_{i}, \widetilde{e}_{j}]=c_{ij} \widetilde{e}_{j},\quad [\widetilde{h}_{i}, \widetilde{f}_{j}]=-c_{ij} \widetilde{f}_{j},\quad 1 \leq i, j \leq n;\label{rel:gim-2}\\
		& [\widetilde{e}_i,\widetilde{f}_i]=\widetilde{h}_i,\quad 1\le i\le n;\label{rel:gim-3}\\
		& [\widetilde{e}_{i}, \widetilde{f}_{j}]=[\widetilde{f}_i,\widetilde{e}_j]=0,\quad 
		(\operatorname{ad} \widetilde{e}_{i})^{-c_{ij}+1} \widetilde{e}_{j}=0= (\operatorname{ad} \widetilde{f}_{i})^{-c_{ij}+1} \widetilde{f}_{j},\quad i \neq j, c_{i j} \leq 0;\label{rel:gim-4}\\
		& [\widetilde{e}_{i},\widetilde{e}_{j}]=[\widetilde{f}_{i}, \widetilde{f}_{j}]=0,\quad 
		(\operatorname{ad} \widetilde{e}_{i})^{c_{ij}+1} \widetilde{f}_{j}=0= (\operatorname{ad} \widetilde{f}_{i})^{c_{ij}+1} \widetilde{e}_{j},\quad\quad\,  i \neq j,
		c_{i j}>0.\label{rel:gim-5}
	\end{align}
	According to the definition, if $C$ is a GCM, then ${\rm gcm}(C)={\rm gim}(C)$.
	
	For any GIM ${C}=(c_{ij})\in M_{n\times n}(\mathbb{Z})$, we associate a $2n\times 2n$ generalized Cartan matrix $A(C)=(a_{ij})$ with indices set $\{1, \ldots, n\} \cup \{\bar{1}, \ldots, \bar{n}\}$ defined as follows:
	\begin{align}
		a_{ii}&=a_{\bar{i},\bar{i}}=2, \quad \text{for $1\leq i\leq n$}; \label{construction-GIM-GCM1}\\
		a_{ij}&=a_{\bar i,\bar j}=\begin{cases}
			c_{ij} & \text{ if } c_{ij}\le 0,\\
			0 & \text{ if } c_{ij}>0,
		\end{cases}
		\qquad \text{ for $i\neq j$};\label{construction-GIM-GCM2}\\
		a_{i\bar j}&=a_{\bar i j}=\begin{cases}
			-c_{ij} & \text{ if } c_{ij}\ge 0, \\
			0 &\text{ if }c_{ij}<0,
		\end{cases} \qquad\text{ for $i\neq j$}.\label{construction-GIM-GCM3}
	\end{align}
	The following is obvious.
	\begin{lemma}
		Let $C\in M_{n\times n}(\mathbb{Z})$ be a symmetrizable GIM with a symmetrizer $D$. Then $A(C)$ is a symmetrizable GCM with the symmetrizer $\begin{bmatrix}D&0\\ 0&D\end{bmatrix}$.
	\end{lemma}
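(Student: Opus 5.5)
The plan is to verify the three GCM axioms for $A(C)$ directly from the defining formulas \eqref{construction-GIM-GCM1}--\eqref{construction-GIM-GCM3}, and then check that $\widetilde D=\operatorname{diag}(D,D)$ symmetrizes it. Both steps reduce, case by case, to the corresponding properties (GIM1)--(GIM3) of $C$ and to the symmetry of $DC$.

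First, (GCM1) is immediate from \eqref{construction-GIM-GCM1}, since $a_{ii}=a_{\bar i\bar i}=2$. We also need the entries $a_{i\bar i}$ and $a_{\bar i i}$, which the displayed formulas leave unspecified (they only treat $i\neq j$). I would take these entries to be $0$. This is the natural reading: vertex $i$ and its partner $\bar i$ are not joined in the quiver to be constructed, and $c_{ii}=2$ does not feed into the off-diagonal recipe.

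For (GCM2), every off-diagonal entry is either $c_{ij}$ with $c_{ij}\le 0$, or $-c_{ij}$ with $c_{ij}\ge 0$, or $0$. In each case the entry is $\le 0$.

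For (GCM3), consider three cases.
\begin{itemize}
\item Entries of type $a_{ij}$ and $a_{ji}$ (and likewise $a_{\bar i\bar j}$, $a_{\bar j\bar i}$): we have $a_{ij}\ne 0$ if and only if $c_{ij}<0$. By (GIM2) this holds if and only if $c_{ji}<0$, that is, if and only if $a_{ji}\ne 0$.
\item Entries $a_{i\bar j}$ and $a_{\bar j i}=a_{\overline{j}\, i}$: by definition $a_{\bar j i}=a_{j\bar i}$, which is nonzero if and only if $c_{ji}>0$. Likewise $a_{i\bar j}\neq0$ if and only if $c_{ij}>0$. These conditions are equivalent by (GIM3).
\item The case where the index pair involves $i$ and $\bar i$ is trivial, since both entries are $0$.
\end{itemize}

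Symmetrizability comes from the identity $d_i c_{ij}=d_j c_{ji}$. Multiplying the $(i,j)$-entry of $A(C)$ by $d_i$ and the $(j,i)$-entry by $d_j$ gives equal results, because the truncations ``$c\le0$'' and ``$c\geq 0$'' are applied consistently: $c_{ij}$ and $c_{ji}$ have the same sign, by (GIM2) and (GIM3) together with $d_i,d_j>0$. The same argument applies to the pairs $(i,\bar j)$ and $(\bar j,i)$, since $a_{\bar j i}=a_{j\bar i}$ and $\bar j$ carries the weight $d_j$.

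There is no real obstacle here. The only point needing care is the convention for the entries $a_{i\bar i}$, which I would state explicitly as $0$.
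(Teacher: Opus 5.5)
Your entry-by-entry verification is correct and is exactly the direct check the paper has in mind; the paper calls the lemma obvious and gives no written proof. Your convention $a_{i\bar i}=a_{\bar i i}=0$ is the right reading: it matches the absence of arrows between $i$ and $\bar i$ in $Q(C,D)$, and it is what the later Euler-form comparison and Berman's embedding $\tilde h_i\mapsto h_i-h_{\bar i}$ require.
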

	Let ${\rm gcm}(A(C))$ be the Kac-Moody algebra associated with $A(C)$ with the set of generators \[\{e_i,f_i,h_i\mid i=1, \ldots, n ,\bar{1}, \ldots, \bar{n}\}.\] 
	Let $\sigma$ be the involution of ${\rm gcm}(A(C))$ such that $\sigma(e_i)=f_{\bar{i}}$, $\sigma(f_i)=e_{\bar{i}}$ and $\sigma(h_{i})=-h_{\bar{i}}$ for all $i\in \{1,\ldots, n,\bar{1},\ldots, \bar{n}\}$. Here we adopt the convention that $\bar{\bar{i}}=i$.

	\begin{theorem}[\cite{B89-GIM-alg}]\label{thm:GIM-Kac-Moody}
		Let $C\in M_{n\times n}(\mathbb{Z})$ be a GIM and $A(C)\in M_{2n\times 2n}(\mathbb{Z})$ the associated GCM. The GIM algebra ${\rm gim}(C)$ is isomorphic to the fixed point subalgebra of the involutory automorphism $\sigma$ of ${\rm gcm}(A(C))$. Furthermore, the embedding homomorphism $\iota$ is given by
		\begin{align*}
			\iota:\, &{\rm gim}(C) \longrightarrow {\rm gcm}(A(C)),
			\\
			&\widetilde{e}_i \longmapsto e_i+f_{\bar i},\\
			&\widetilde{f}_i \longmapsto f_i+e_{\bar i},\\
			&\widetilde{h}_i \longmapsto h_i-h_{\bar i},  
		\end{align*}
		for $i=1,\dots, n$.
	\end{theorem}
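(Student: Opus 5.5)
The plan is to follow Berman's original strategy and split the statement into three parts. First, $\iota$ is a well-defined Lie algebra homomorphism. Second, its image is exactly the fixed-point subalgebra ${\rm gcm}(A(C))^\sigma$. Third, $\iota$ is injective.

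\textbf{Step 1: $\iota$ is well defined.} We check that the elements $E_i:=e_i+f_{\bar i}$, $F_i:=f_i+e_{\bar i}$ and $H_i:=h_i-h_{\bar i}$ satisfy the relations \eqref{rel:gim-1}--\eqref{rel:gim-5}.

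\begin{itemize}
\item Relation \eqref{rel:gim-1} is immediate.
\item For \eqref{rel:gim-2}, compute $[H_i,e_j]=(a_{ij}-a_{\bar i j})e_j$. Using \eqref{construction-GIM-GCM2} and \eqref{construction-GIM-GCM3}, we have $a_{ij}-a_{\bar ij}=c_{ij}$ in every case, and similarly for $f_{\bar j}$.
\item For \eqref{rel:gim-3}, $[E_i,F_i]=[e_i,f_i]+[f_{\bar i},e_{\bar i}]=h_i-h_{\bar i}$.
\item For $i\ne j$ with $c_{ij}\le 0$, we have $a_{i\bar j}=0$ (or $c_{ij}=0$), so $[E_i,F_j]=0$. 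The Serre relation $(\operatorname{ad}E_i)^{1-c_{ij}}E_j=0$ follows because $e_i$ and $f_{\bar i}$ commute with the "wrong" summands. The mixed terms vanish since $[e_i,e_{\bar j}]$-type pieces do not occur and $[e_i,f_{\bar j}]=0$. The expansion therefore reduces to $(\operatorname{ad}e_i)^{1-a_{ij}}e_j+(\operatorname{ad}f_{\bar i})^{1-a_{\bar i\bar j}}f_{\bar j}=0$.
\item For $c_{ij}>0$, the roles swap. $[E_i,E_j]=[e_i,f_{\bar j}]+[f_{\bar i},e_j]=0$, and the Serre relation on $(\operatorname{ad}E_i)^{c_{ij}+1}F_j$ reduces to $(\operatorname{ad}e_i)^{1-a_{i\bar j}}e_{\bar j}$ and its $\sigma$-image, both of which are zero.
\end{itemize}

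Since $\sigma(E_i)=E_i$, $\sigma(F_i)=F_i$ and $\sigma(H_i)=H_i$, the image of $\iota$ lies in the fixed-point subalgebra.

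\textbf{Step 2: surjectivity onto the fixed points.} Decompose ${\rm gcm}(A(C))=\bigoplus_\alpha \mathfrak g_\alpha$ by the root lattice with basis $\alpha_i,\alpha_{\bar i}$. The involution $\sigma$ maps $\mathfrak g_\alpha$ to $\mathfrak g_{\bar\sigma(\alpha)}$, where $\bar\sigma(\alpha_i)=-\alpha_{\bar i}$. The fixed-point algebra is therefore spanned by the elements $x+\sigma x$ with $x\in\mathfrak g_\alpha$ and $\alpha$ positive, together with $\mathfrak h^\sigma$. Positivity is possible because every root is positive or negative, and $\bar\sigma$ sends positive roots to negative ones, so no root is fixed.

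We prove by induction on height that $x+\sigma x\in\operatorname{im}\iota$ for $x\in\mathfrak g_\alpha^+$. Write $x=[e_k,y]$ with $y$ of smaller height. The identity
\[
[e_k+\sigma e_k,\,y+\sigma y]=([e_k,y]+\sigma[e_k,y])+([e_k,\sigma y]+\sigma[e_k,\sigma y])
\]
shows that $x+\sigma x$ equals a bracket of image elements minus the term $z+\sigma z$ with $z=[e_k,\sigma y]$. Here $\sigma y$ is a negative root vector, so $z$ lies in a root space $\mathfrak g_{\alpha_k-\bar\sigma(\beta)}$, which is either negative of smaller height or has height at most $1$. Rewriting $z+\sigma z=\sigma z+\sigma(\sigma z)$ with $\sigma z$ positive of strictly smaller height, the induction closes. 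The Cartan part $\mathfrak h^\sigma$ is spanned by the $h_i-h_{\bar i}=H_i$.

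\textbf{Step 3: injectivity.} This is the main obstacle. Injectivity cannot be verified on generators alone; one needs to know that $\gim(C)$ is no bigger than its image. The approach I would try first is to construct a $\Z^{n}$-grading on $\gim(C)$ by $\deg\tilde e_i=\epsilon_i$, compatible with $\iota$ via the projection $\alpha_i\mapsto\epsilon_i$, $\alpha_{\bar i}\mapsto-\epsilon_i$. The next step is to build a left inverse. Define a homomorphism $\psi:{\rm gcm}(A(C))\to \gim(C)\otimes\mathbb C[t]/(t^2-1)$, or into $\gim(C)\oplus\gim(C)$ twisted by the swap, such that $\psi\circ\iota$ is the diagonal embedding. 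Concretely, set $e_i\mapsto(\tilde e_i,\,\cdot)$ and $f_{\bar i}\mapsto(\cdot,\tilde e_i)$, so that each coordinate respects the Kac--Moody relations. Checking the Serre relations for $\psi$ again uses the sign split of $c_{ij}$. If this does not work directly, the fallback is Berman's argument: identify the kernel of $\iota$ as an ideal meeting the Cartan part trivially, then use the local structure of the GIM generators to show that such an ideal is zero.
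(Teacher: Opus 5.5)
The paper does not prove this statement; it quotes Berman. Your plan therefore has to stand on its own.

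\textbf{Steps 1 and 2.} These are fine in substance.
\begin{itemize}
\item The cross terms you drop vanish by the Serre relations. For example, $[e_i,e_{\bar i}]$ in $[E_i,F_i]$ vanishes because $a_{i\bar i}=0$. Likewise $[e_i,e_j]$ in $[E_i,E_j]$ for $c_{ij}>0$ vanishes because $a_{ij}=0$.
\item In the height induction, $z=[e_k,\sigma y]$ has height $2-\operatorname{ht}\alpha\le 0$. So when $\operatorname{ht}\alpha=2$ it may lie in $\mathfrak h^\sigma=\operatorname{span}\{H_i\}$, which you have covered.
\end{itemize}

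\textbf{Step 3 is the genuine gap.} It is the actual content of the theorem, and neither of your routes can work.

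The left inverse cannot exist. Any $\psi$ into $\gim(C)\otimes\mathbb C[t]/(t^2-1)\cong\gim(C)\times\gim(C)$ with $\psi\circ\iota$ diagonal gives, after projecting to one factor, a Lie homomorphism $\psi_1\colon\mathfrak g={\rm gcm}(A(C))\to\gim(C)$ with $\psi_1\circ\iota=\id$. Then $\ker\psi_1$ would be an ideal of $\mathfrak g$ complementary to $\mathfrak g^\sigma$. Take $C$ with $c_{12}=c_{23}=-1$ and $c_{13}=1$. The diagram of $A(C)$ is the hexagon $1,2,3,\bar 1,\bar 2,\bar 3$, so $\mathfrak g\cong\mathfrak{sl}_6\otimes\mathbb C[t^{\pm1}]\oplus\mathbb Cc$. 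Every ideal of this algebra is central or of finite codimension, whereas $\mathfrak g^\sigma$ has infinite dimension and infinite codimension.

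The same example defeats the fallback. Set $I=\mathfrak g^\sigma\cap\bigl(\mathfrak{sl}_6\otimes(t-1)\mathbb C[t^{\pm1}]\oplus\mathbb Cc\bigr)$. This is a nonzero ideal of $\mathfrak g^\sigma$ with $I\cap\mathfrak h^\sigma=0$, since $\mathfrak h\subset\mathfrak{sl}_6\otimes 1\oplus\mathbb Cc$ and $\sigma(c)=-c$. Hence $\iota^{-1}(I)$ is a nonzero ideal of $\gim(C)$ meeting $\tilde{\mathfrak h}=\operatorname{span}\{\tilde h_i\}$ trivially. So the property ``meets the Cartan part trivially'' alone can never force $\ker\iota=0$.

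\textbf{The missing idea is a dimension count.} Your Step 2 already supplies half of it.
\begin{itemize}
\item Filter $L=\gim(C)$ by $F_0=\tilde{\mathfrak h}$ and $F_m=F_0+\operatorname{span}\{[x_1,\dots,x_k]: k\le m,\ x_j\in\{\tilde e_i,\tilde f_i\}\}$. Then $[F_a,F_b]\subseteq F_{a+b}$.
\item The assignment $e_i\mapsto\tilde e_i$, $e_{\bar i}\mapsto\tilde f_i$ induces a surjection from the Serre-presented positive part $\mathfrak n^+$ of $\mathfrak g$ onto $\bigoplus_{m\ge 1}F_m/F_{m-1}$. Under this dictionary, \eqref{rel:gim-4}--\eqref{rel:gim-5} are exactly the Serre relations $(\operatorname{ad}e_k)^{1-a_{kl}}e_l=0$ for $l\neq k,\bar k$. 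The relation $[\tilde e_i,\tilde f_i]=\tilde h_i\in F_0$ yields $[e_i,e_{\bar i}]=0$ in the associated graded, matching $a_{i\bar i}=0$.
\item Hence $\dim F_m\le n+\sum_{k\le m}\dim\mathfrak n^+_k$.
\item Your height induction, keeping track of bracket lengths, shows $\iota(F_m)\supseteq\mathfrak h^\sigma\oplus\{x+\sigma x: x\in\bigoplus_{k\le m}\mathfrak n^+_k\}$. This space has exactly the same dimension.
\end{itemize}
Thus $\iota|_{F_m}$ is injective for every $m$, and so $\iota$ is injective.
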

	\begin{remark}\label{rem:twisted-embedding}
		We can slightly modify the embedding homomorphism $\iota$ in Theorem \ref{thm:GIM-Kac-Moody}. 
		Let $\tau_{1}:{\rm gim}(C)\rightarrow {\rm gim}(C)$ be the automorphism given by \[\tilde{e}_i\mapsto\sqrt{-1}\tilde{e}_i,\quad \tilde{f}_i\mapsto -\sqrt{-1}\tilde{f}_i, \quad \tilde{h}_i\mapsto \tilde{h}_i, \quad\forall i=1,\ldots, n.\] Let $\tau_2:{\rm gcm}(A(C))\rightarrow {\rm gcm}(A(C))$ be the automorphism given by 
		\[
		e_i\mapsto \sqrt{-1}e_i,\quad f_i\mapsto -\sqrt{-1}f_i,\quad h_i\mapsto h_i,\quad i=1,\ldots,n,\bar{1},\ldots, \bar{n}.
		\]
		A direct computation shows that  the embedding $\iota':=\tau_2\circ \iota\circ \tau_1^{-1}:{\rm gim}(C)\to {\rm gcm(A(C))}$ is given by 
		\[
		\widetilde{e}_i \longmapsto e_i-f_{\bar i}, \quad \widetilde{f}_i \longmapsto f_i-e_{\bar i},\quad \widetilde{h}_i \longmapsto h_i-h_{\bar i}.
		\]
		
	\end{remark}
	
	\subsection{Valued quiver associated with symmetrizable GIM $(C,D)$}\label{ss:valued-quiver-(C,D)} Throughout the remainder of this section, we fix a symmetrizable GIM
	$C=(c_{ij})\in M_{n\times n}(\mathbb{Z})$ with symmetrizer
	$D=\operatorname{diag}(d_1,\ldots,d_n)$, and abbreviate this pair by $(C,D)$.
	
	For any $i\neq j$, let $n_{ij}=\frac{|c_{ij}|\cdot\gcd(d_i,d_j)}{d_j}$. We define the valued quiver $Q:=Q(C,D)$ associated with $(C,D)$ as follows:
	\begin{itemize}
		\item The set of vertices is $Q_0=\{1,\ldots, n\}\cup\{\bar{1},\ldots,\bar{n}\}$;
		\item The set of arrows $Q_1$ is constructed based on the sign of $c_{ij}$ for $i<j$:
		\begin{itemize}
			\item if  $c_{ij}<0$, there are $n_{ij}$ arrows from $i$ to $j$, denoted by $\alpha_{ij}^{(1)},\ldots, \alpha_{ij}^{(n_{ij})}$,
			and  $n_{ij}$ arrows from $\bar{i}$ to $\bar{j}$, denoted by $\alpha_{\bar{i},\bar{j}}^{(1)},\ldots, \alpha_{\bar{i},\bar{j}}^{(n_{ij})}$;
			\item if  $c_{ij}>0$, there are $n_{ij}$ arrows from $i$ to $\bar{j}$, denoted by $\alpha_{i\bar{j}}^{(1)},\ldots,\alpha_{i\bar{j}}^{(n_{ij})}$,
			and  $n_{ij}$ arrows from $\bar{i}$ to $j$, denoted by $\alpha_{\bar{i}{j}}^{(1)},\ldots,\alpha_{\bar{i}{j}}^{(n_{ij})}$.
		\end{itemize}
		\item The valuation $\mathbf{d}:Q_0\rightarrow \mathbb{Z}$ is given by $\mathbf{d}(i)=d_i$ and $\mathbf{d}(\bar{i})=d_i$ for $1\leq i\leq n$;
	\end{itemize}
	It is evident from the construction that $(Q,\mathbf{d})$ is acyclic. Furthermore, there exists a natural involution $\theta$ on $(Q, \mathbf{d})$ defined by:
	\[\theta(i) = \bar{i}, \text{ and } \theta(\bar{i}) = i, \; \forall 1\leq i\leq n.\]
	This involution acts on the arrows by swapping the ``barred" and ``unbarred" labels (e.g., $\theta(\alpha_{ij}^{(k)}) = \alpha_{\bar{i},\bar{j}}^{(k)}$ and $\theta(\alpha_{i\bar{j}}^{(k)}) = \alpha_{\bar{i}j}^{(k)}$), effectively preserving the valuation $\mathbf{d}$.
	
	Let $S_1,\ldots, S_n,S_{\bar{1}},\ldots,S_{\bar{n}}$ be the simple representations of $(Q,\mathbf{d})$ over the finite field $\mathbb{F}$.
	Recall that the GCM $A(C)$ associated with $(C,D)$ is defined  as in \eqref{construction-GIM-GCM1}--\eqref{construction-GIM-GCM3}. The following is a consequence of definitions.
	\begin{lemma}\label{lem:Ringel-Euler-form-(Q,d)}
		The matrix of the symmetric Euler--Ringel form $(-,-)$ on the Grothendieck group $\go(Q)$, with respect to  the basis $\hat{S}_1,\ldots,\hat{S}_{\bar{n}}$, coincides with the matrix $\operatorname{diag}(D,D)A(C)$. Consequently, we have $A_Q=A(C)$, where $A_Q$ is the GCM of $(Q,\mathbf{d})$ defined in \eqref{eq:gcm-valued-quiver}.
	\end{lemma}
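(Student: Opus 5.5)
The plan is to compute the symmetric Euler--Ringel form entry by entry on the basis of simples, using the formula from Subsection~\ref{ss:valued-quiver}: $(\hat S_u,\hat S_u)=2d_u$, and for $u\neq v$,
\[
(\hat S_u,\hat S_v)=-\frac{d_ud_v(n_{uv}+n_{vu})}{\gcd(d_u,d_v)},
\]
where $n_{uv}$ counts arrows from $u$ to $v$ in $Q=Q(C,D)$. We then compare each entry with the corresponding entry of $\operatorname{diag}(D,D)A(C)$, whose $(u,v)$-entry is $d_u a_{uv}$ (note $d_{\bar i}=d_i$). The diagonal entries agree immediately, since $a_{ii}=a_{\bar i\bar i}=2$. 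The only bookkeeping point is that the arrow counts $n_{ij}=\frac{|c_{ij}|\gcd(d_i,d_j)}{d_j}$ are defined via $i<j$, so the symmetric sum $n_{uv}+n_{vu}$ picks up exactly one of the two directions.

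We split the off-diagonal pairs into four types: $(i,j)$, $(\bar i,\bar j)$, $(i,\bar j)$ and $(\bar i,j)$.

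\textbf{Unbarred pairs} $(i,j)$ with $i\neq j$, taking $i<j$ without loss of generality.
\begin{itemize}
\item If $c_{ij}<0$, there are $n_{ij}$ arrows $i\to j$ and none in the other direction. The entry is
\[
-\frac{d_id_j}{\gcd(d_i,d_j)}\cdot\frac{|c_{ij}|\gcd(d_i,d_j)}{d_j}=d_ic_{ij}=d_ia_{ij}.
\]
\item If $c_{ij}\ge 0$, there are no arrows between $i$ and $j$, so the entry is $0=d_ia_{ij}$.
\end{itemize}
For $j<i$ we use the symmetry of both sides. The form is symmetric, and $d_ia_{ij}=d_jc_{ji}$-type identities hold because $DC$ is symmetric and $c_{ij}$, $c_{ji}$ have the same sign (GIM2, GIM3). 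The barred pairs $(\bar i,\bar j)$ are handled identically, since the arrows are duplicated.

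\textbf{Mixed pairs} $(i,\bar j)$ with $i\neq j$. Arrows between $i$ and $\bar j$ exist only when $c_{ij}>0$.
\begin{itemize}
\item If $i<j$, there are $n_{ij}$ arrows $i\to\bar j$, giving entry $-d_ic_{ij}=d_ia_{i\bar j}$.
\item If $j<i$, the arrows are $\bar j\to i$, counted by $n_{ji}$, giving
\[
-\frac{d_id_j n_{ji}}{\gcd(d_i,d_j)}=-d_jc_{ji}=-d_ic_{ij},
\]
using $d_jc_{ji}=d_ic_{ij}$.
\end{itemize}
When $c_{ij}\le 0$ there are no such arrows and the entry is $0$. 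This matches $a_{i\bar j}$, with the single borderline case $c_{ij}=0$ giving $0$ under both branches of the definition. The pairs $(\bar i,j)$ follow by applying $\theta$, which preserves the form and the valuation, or by the same computation.

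This establishes that the Gram matrix is $\operatorname{diag}(D,D)A(C)$. Since $A_Q$ is defined by $a_{uv}=2(\hat S_u,\hat S_v)/(\hat S_u,\hat S_u)$, dividing row $u$ by $d_u$ gives $A_Q=A(C)$. No step is a real obstacle. The only care needed is the index-order case $j<i$, which is where symmetrizability of $C$ enters.
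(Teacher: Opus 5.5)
Your proposal is correct and is exactly the direct entry-by-entry verification the paper intends when it calls this lemma a consequence of the definitions; splitting by the sign of $c_{ij}$ and by the order of $i,j$, with $d_ic_{ij}=d_jc_{ji}$ handling the case $j<i$, is the right bookkeeping. The only entries you skip are $(i,\bar i)$, since your mixed case assumes $i\neq j$; but $Q(C,D)$ has no arrows between $i$ and $\bar i$, so $(\hat S_i,\hat S_{\bar i})=0$, which matches the convention $a_{i\bar i}=0$ that the paper's definition of $A(C)$ leaves implicit.
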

	
	\begin{example}\label{ep:GIM-quiver}
		We present here an example of a symmetric GIM matrix of order $6$ together with its associated valued quiver. Let
		\[
		C=
		\begin{pmatrix}
			2 & -1 & -1 & -1 & -1 & 2 \\
			-1 & 2 & 0 & 0 & 0 & -1 \\
			-1 & 0 & 2 & 0 & 0 & -1 \\
			-1 & 0 & 0 & 2 & 0 & -1 \\
			-1 & 0 & 0 & 0 & 2 & -1 \\
			2 & -1 & -1 & -1 & -1 & 2 
		\end{pmatrix}.
		\]
		In this case, $D$ is the identity matrix. Hence the valuation is given by
		$\mathbf{d}(i)=1$, for $1\leq i\leq 6.$
		Thus, the valued quiver $(Q,\mathbf d)$ may be regarded as an ordinary quiver, as shown in Figure~\ref{fig:quiver-GIM}.
		\begin{figure}[htbp]
			\centering
			\begin{tikzcd}
				&&& 1 && {\bar 1} &&& \\
				5 & 4 & 3 & 2 && {\bar 2} & {\bar 3} & {\bar 4} & {\bar 5} \\
				&&& 6 && {\bar 6}
				\arrow[from=1-4, to=2-1]
				\arrow[from=1-4, to=2-2]
				\arrow[from=1-4, to=2-3]
				\arrow[from=1-4, to=2-4]
				\arrow[shift right, from=1-4, to=3-6]
				\arrow[shift left, from=1-4, to=3-6]
				\arrow[from=1-6, to=2-6]
				\arrow[from=1-6, to=2-7]
				\arrow[from=1-6, to=2-8]
				\arrow[from=1-6, to=2-9]
				\arrow[shift left, from=1-6, to=3-4]
				\arrow[shift right, from=1-6, to=3-4]
				\arrow[from=2-1, to=3-4]
				\arrow[from=2-2, to=3-4]
				\arrow[from=2-3, to=3-4]
				\arrow[from=2-4, to=3-4]
				\arrow[from=2-6, to=3-6]
				\arrow[from=2-7, to=3-6]
				\arrow[from=2-8, to=3-6]
				\arrow[from=2-9, to=3-6]
			\end{tikzcd}
			\caption{The valued quiver associated with GIM $C$.}
			\label{fig:quiver-GIM}
		\end{figure}
		Later, we will compare it with the elliptic Cartan matrix of type $D_4^{(1,1)}$ and its corresponding quiver; see Example~\ref{ep:D_4-quiver}.
	\end{example}

	\subsection{2-periodic triangulated categories associated to $(C,D)$}\label{ss:2-periodic-(C,D)}
	Recall that $\rep(Q)=\rep_{\mathbb{F}}(Q,\mathbf{d})$ is a hereditary abelian category. Denote by $\mathcal{D}:=\mathcal{D}^b(\rep(Q))$ the bounded derived category of $\rep(Q)$ with suspension functor $\Sigma$.
	
	The involution $\theta$ on $(Q, \mathbf{d})$ induces an exact additive functor on the category of representations, $\theta: \rep(Q) \rightarrow \rep(Q)$ (by abuse of notation, we denote both the map and the induced functor by $\theta$). For a vauled represetation $V=(V_i,V_\alpha)_{i\in Q_0,\alpha\in Q_1}$, the representation $\theta(V):=(W_i,W_\alpha)_{i\in Q_0,\alpha\in Q_1}$ is defined as follows:
	\begin{itemize}
		\item $W_i=V_{\bar{i}}$ for $i\in Q_0$;
		\item $W_\alpha=V_{\theta(\alpha)}:V_{s(\bar{\alpha})}\rightarrow V_{t(\bar{\alpha})}$ for $\alpha\in Q_1$.
	\end{itemize}
	Clearly, $\theta^2\cong \id$. Moreover,  
	$\theta(S_i)\cong S_{\bar{i}}$ for $i=1,\dots,n$.
	
	As previously noted, there exists a finite-dimensional hereditary algebra $H$
	such that
	$\mod H\simeq \rep(Q).$
	The involution $\theta$ on $(Q,\mathbf d)$ induces an involution of $H$,
	and hence an exact functor on $\mod H$, which we still denote by
	$\theta:\mod H\rightarrow \mod H.$
	Moreover, the equivalence
	$\mod H\simeq \rep(Q)$
	is compatible with the functor $\theta$. Therefore, in what follows, we shall
	identify the categories $\rep(Q)$ and $\mod H$, together with the
	corresponding functor $\theta$ on them.
	
	Therefore, by the construction in Section~\ref{s:2-periodic-M}, we obtain a
	$2$-periodic triangulated category $\mathcal{M}_\theta$. Since
	$\rep(Q)$ is hereditary in the present setting, it follows that
	$\mathcal{D}/G \simeq \mathcal{M}_\theta.$
	Consequently, the orbit category $\mathcal{D}/G$ admits a canonical
	triangulated structure. In what follows, we shall write $\mathcal{D}/G$
	instead of $\mathcal{M}_\theta$.

	


	According to Lemma \ref{lem:basis-go-M}, $h_{S_1},\ldots, h_{S_n}$ is a $\mathbb{Z}$-basis of $\go(\mathcal{D}/G)$. Let $(-,-)_{\mathcal{D}/G}$ be the symmetric Euler form of $\mathcal{D}/G$. Then we have the following.
	\begin{lemma}\label{lem:Euler-form-D/G}
		The matrix of $(-,-)_{\mathcal{D}/G}$ with respect to the basis $h_{S_1},\ldots, h_{S_n}$ of $\go(\mathcal{D}/G)$ is $DC$.
	\end{lemma}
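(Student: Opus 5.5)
We have to show $(h_{S_i},h_{S_j})_{\mathcal{D}/G}=d_ic_{ij}$ for all $1\le i,j\le n$. Since $\mathcal{D}/G$ is $2$-periodic and $\Sigma\cong\theta$ on it (Lemma~\ref{lem:theta-in-M}), the form \eqref{eq:sym-Euler-form} can be written as
\[
(h_X,h_Y)_{\mathcal{D}/G}=\dim\Hom_{\mathcal{D}/G}(X,Y)-\dim\Hom_{\mathcal{D}/G}(X,\Sigma Y)+(\text{same with }X,Y\text{ swapped}).
\]
The plan is to compute each Hom space in the orbit category directly from the definition
\[
\Hom_{\mathcal{D}/G}(X,Y)=\bigoplus_{p\in\mathbb{Z}}\Hom_{\mathcal{D}}(X,G^pY), \qquad G^p=\theta^p\Sigma^p .
\]
Then we reduce everything to dimensions of Hom and $\Ext^1$ between simple representations of $(Q,\mathbf d)$, which are recorded by the Euler--Ringel form and Lemma~\ref{lem:Ringel-Euler-form-(Q,d)}.

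First, take $X=S_i$ and $Y=S_j$, both modules. Since $\rep(Q)$ is hereditary, $\Hom_{\mathcal{D}}(S_i,\theta^p\Sigma^pS_j)$ vanishes unless $p\in\{0,1\}$. This gives
\[
\Hom_{\mathcal{D}/G}(S_i,S_j)\cong\Hom(S_i,S_j)\oplus\Ext^1(S_i,S_{\bar j}).
\]
Similarly, $\Sigma S_j\cong G(\theta S_j)$ in $\mathcal{D}/G$, so $\Sigma S_j\cong S_{\bar j}$ there, and hence
\[
\Hom_{\mathcal{D}/G}(S_i,\Sigma S_j)\cong\Hom(S_i,S_{\bar j})\oplus\Ext^1(S_i,S_j).
\]
Combining these,
\[
\dim\Hom_{\mathcal{D}/G}(S_i,S_j)-\dim\Hom_{\mathcal{D}/G}(S_i,\Sigma S_j)=\langle\hat S_i,\hat S_j\rangle-\langle\hat S_i,\hat S_{\bar j}\rangle .
\]
Symmetrizing in $i,j$ gives
\[
(h_{S_i},h_{S_j})_{\mathcal{D}/G}=(\hat S_i,\hat S_j)-(\hat S_i,\hat S_{\bar j}),
\]
where on the right we use the symmetric Euler--Ringel form on $\go(Q)$. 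Here we use $\langle\hat S_j,\hat S_{\bar i}\rangle=\langle\hat S_{\bar j},\hat S_i\rangle$, which holds because $\theta$ is an exact autoequivalence.

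Second, evaluate this with Lemma~\ref{lem:Ringel-Euler-form-(Q,d)}, which gives $(\hat S_i,\hat S_j)=d_ia_{ij}$ and $(\hat S_i,\hat S_{\bar j})=d_ia_{i\bar j}$ with $A(C)$ as in \eqref{construction-GIM-GCM1}--\eqref{construction-GIM-GCM3}. We split into cases.
\begin{itemize}
\item If $i=j$: then $a_{ii}=2$ and $a_{i\bar i}=0$, since $S_i$ and $S_{\bar i}$ are not joined by arrows. The value is $2d_i=d_ic_{ii}$.
\item If $i\ne j$ and $c_{ij}\le0$: then $a_{ij}=c_{ij}$ and $a_{i\bar j}=0$ (this also covers $c_{ij}=0$). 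The value is $d_ic_{ij}$.
\item If $i\ne j$ and $c_{ij}>0$: then $a_{ij}=0$ and $a_{i\bar j}=-c_{ij}$. The value is $0-(-d_ic_{ij})=d_ic_{ij}$.
\end{itemize}
In all cases the matrix is $DC$.

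The main obstacle is the first step: justifying the orbit-category Hom decomposition carefully. This requires checking which $p$ contribute, identifying $\Sigma S_j$ with $S_{\bar j}$ inside $\mathcal{D}/G$, and keeping track of field-of-definition dimensions. All dimensions are taken over $\mathbb{F}$, which matches the convention of the Euler--Ringel form, so this should cause no discrepancy. The sign bookkeeping in the case $c_{ij}>0$ is where an error would most likely appear. It hinges on the simple $S_{\bar j}$ contributing with a minus sign, because $h_{\theta S_j}=-h_{S_j}$.
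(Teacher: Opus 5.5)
Your proof is correct and follows the same route as the paper. Both arguments split $\Hom_{\mathcal{D}/G}(S_i,S_j)$ and $\Hom_{\mathcal{D}/G}(S_i,\Sigma S_j)$ into the surviving summands $\Hom_{\mathcal{D}}(S_i,G^pS_j)$ and $\Hom_{\mathcal{D}}(S_i,G^p\Sigma S_j)$ (only finitely many $p$ contribute, by heredity), and then count $\Hom$ and $\Ext^1$ between simples of $(Q,\mathbf d)$. The only difference is packaging: you write the result as $(h_{S_i},h_{S_j})_{\mathcal{D}/G}=(\hat S_i,\hat S_j)-(\hat S_i,\hat S_{\bar j})$ and invoke Lemma~\ref{lem:Ringel-Euler-form-(Q,d)}, which treats all $i,j$ uniformly, whereas the paper restricts to $i\le j$ and counts arrows directly.
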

	\begin{proof}
		Since $\rep(Q)$ is hereditary, for any $1\leq i,j\leq n$, we have
		\begin{eqnarray*}
			\Hom_{\mathcal{D}/G}(S_i,S_j)&=&\bigoplus_{p\in \mathbb{Z}}\Hom_{\mathcal{D}}(S_i,G^pS_j)\\
			&=&\Hom_{\mathcal{D}}(S_i,S_j)\oplus \Hom_{\mathcal{D}}(S_i,GS_j)\\
			&=&\Hom_{\mathcal{D}}(S_i,S_j)\oplus \Hom_{\mathcal{D}}(S_i,\Sigma S_{\bar{j}}).
		\end{eqnarray*}
		Similarly,  $\Hom_{\mathcal{D}/G}(S_i,\Sigma S_j)=\Hom_{\mathcal{D}}(S_i,S_{\bar{j}})\oplus \Hom_{\mathcal{D}}(S_i,\Sigma S_j)$.
		Now assume that $1\leq i\leq j\leq n$.  By definition, 
		\begin{eqnarray*}
			(h_{S_i},h_{S_j})_{\mathcal{D}/G}
			&=&\dim \Hom_{\mathcal{D}/G}(S_i,S_j)-\dim \Hom_{\mathcal{D}/G}(S_i,\Sigma S_j)\\
			&&+\dim \Hom_{\mathcal{D}/G}(S_j,S_i)-\dim \Hom_{\mathcal{D}/G}(S_j,\Sigma S_i)\\
			&=& \dim \Hom_{\mathcal{D}}(S_i,S_j)+\dim \Hom_{\mathcal{D}}(S_i,\Sigma S_{\bar{j}})\\
			&&+\dim \Hom_{\mathcal{D}}(S_j,S_i)
			+\dim \Hom_{\mathcal{D}}(S_j,\Sigma S_{\bar{i}})\\
			&&-\dim \Hom_{\mathcal{D}}(S_i,S_{\bar{j}})-\dim \Hom_{\mathcal{D}}(S_i,\Sigma S_j)\\
			&&-\dim \Hom_{\mathcal{D}}(S_j,S_{\bar{i}})-\dim \Hom_{\mathcal{D}}(S_j,\Sigma S_i).
		\end{eqnarray*}
		If $i=j$, then $(h_{S_i},h_{S_i})_{\mathcal{D}/G}=2\dim \Hom_{\mathcal{D}}(S_i,S_i)=2d_i$.  
		If $i<j$, then 
		\begin{eqnarray*}
			&&(h_{S_i},h_{S_j})_{\mathcal{D}/G}\\
			&=&\dim \Hom_{\mathcal{D}}(S_i,\Sigma S_{\bar{j}})+\dim \Hom_{\mathcal{D}}(S_j,\Sigma S_{\bar{i}})-\dim \Hom_{\mathcal{D}}(S_i,\Sigma S_{{j}})-\dim \Hom_{\mathcal{D}}(S_j,\Sigma S_{{i}})\\
			&=&\dim \Hom_{\mathcal{D}}(S_i,\Sigma S_{\bar{j}})-\dim \Hom_{\mathcal{D}}(S_i,\Sigma S_{{j}})\\
			&=&\begin{cases}
				-\dim\Hom_{\mathcal{D}}(S_i,\Sigma S_j)=d_ic_{ij} &\text{if $c_{ij}\leq 0$};\\
				\dim \Hom_{\mathcal{D}}(S_i,\Sigma S_{\bar{j}})=d_ic_{ij}&\text{if $c_{ij}> 0$}.
			\end{cases}
		\end{eqnarray*}
		The proof is completed.
	\end{proof}

	For the valued quiver $(Q,\mathbf d)$, we may also define the corresponding
	root category $\mathcal{D}/\Sigma^2$ as in Subsection~\ref{ss:root-cat}; it is
	again a $2$-periodic triangulated category.
	
	Both $\mathcal{D}/\Sigma^2$ and $\mathcal{D}/G$ have the same objects as $\mathcal{D}$. Up to isomorphism, the indecomposable objects of $\mathcal{D}/G$ coincide with the indecomposable objects of $\rep(Q)$. In contrast, the indecomposable objects of $\mathcal{D}/\Sigma^2$ consist, up to isomorphism, of the indecomposable objects of $\rep(Q)$ together with their one-step suspensions. 
	
	Let $|\mathbb{F}|=q$. Applying the construction in Subsection \ref{ss-R-H-Lie}, we obtain two Lie algebras $\mathfrak{g}(\mathcal{D}/G)_{(q-1)}$ and $\mathfrak{g}(\mathcal{D}/\Sigma^2)_{(q-1)}$ over $\mathbb{Z}/(q-1)\mathbb{Z}$. To avoid confusion between the notation in the two Ringel--Hall Lie algebras
	$\mathfrak{g}(\mathcal{D}/G)_{(q-1)}$ and
	$\mathfrak{g}(\mathcal{D}/\Sigma^2)_{(q-1)}$, we adopt the following convention:
	\begin{itemize}
		\item For $X,Y,L\in \mathcal{D}$, we  denote  by $g_{Y,X}^L$  the Hall number in $\mathcal{D}/G$, and by $h_{Y,X}^L$ the Hall number in $\mathcal{D}/\Sigma^2$. Set $\tilde{\gamma}_{X,Y}^L:=g_{Y,X}^L-g_{X,Y}^L$ and $\gamma_{X,Y}^L=h_{Y,X}^L-h_{X,Y}^L$.
		\item For $X\in\mathcal{D}$, we denote by $\tilde{h}_X$ the image of $X$ in $\go(\mathcal{D}/G)$, and by $h_X$ the image of $X$ in $\go(\mathcal{D}/\Sigma^2)$.
		\item For an indecomposable object $M\in \mathcal{D}$, $\tilde{d}_M:=\dim_{\mathbb{F}} \big(\End_{\mathcal{D}/G}(M)/\rad \End_{\mathcal{D}/G}(M)\big)$ and $d_M:=\dim_{\mathbb{F}} \big(\End_{\mathcal{D}/\Sigma^2}(M)/\rad \End_{\mathcal{D}/\Sigma^2}(M)\big)$.
		\item Let $\tilde{\mathfrak{h}}$ be the subgroup of $\go(\mathcal{D}/G)\otimes_\mathbb{Z} \mathbb{Q}$ generated by $\frac{\tilde{h}_M}{\tilde{d}_M}$ with $M\in \operatorname{ind} \mathcal{D}/G$, and $\mathfrak{h}$ the subgroup of $\go(\mathcal{D}/\Sigma^2)\otimes_{\mathbb{Z}}\mathbb{Q}$ generated by $\frac{h_M}{d_M}$ with $M\in \operatorname{ind} \mathcal{D}/\Sigma^2$.
		
		\item Let $\tilde{\mathfrak{n}}$ be the free abelian group with basis $\{\tilde{u}_X\mid X\in \operatorname{ind} \mathcal{D}/G\}$ and $\mathfrak{n}$ the free abelian group with basis $\{u_X\mid X\in \operatorname{ind} \mathcal{D}/\Sigma^2\}$.
		\item $\mathfrak{g}(\mathcal{D}/G):=\tilde{\mathfrak{h}}\oplus \tilde{\mathfrak{n}}$ and $\mathfrak{g}(\mathcal{D}/\Sigma^2):=\mathfrak{h}\oplus \mathfrak{n}$.
	\end{itemize}

	\subsection{The Ringel--Hall Lie algebra $\mathfrak{g}(\mathcal{D}/G)_{(q-1)}$}\label{ss:sur-inj of R-H-Lie}
	
	The following result is a direct consequence of the definition of $\mathfrak{g}(\mathcal{D}/G)_{(q-1)}$ and Lemma~\ref{lem:theta-in-M}.
	
	\begin{lemma}\label{lem:involution-Lie-algebra}
		The map $X\mapsto \theta(X)$ induces an involution  $\theta:\mathfrak{g}(\mathcal{D}/G)_{(q-1)}\rightarrow \mathfrak{g}(\mathcal{D}/G)_{(q-1)}$. In particular, $\theta(\tilde{h}_X)=\tilde{h}_{\theta(X)}$ and $\theta(\tilde{u}_X)=\tilde{u}_{\theta(X)}$.
	\end{lemma}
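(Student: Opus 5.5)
The plan is to check directly that the assignment $\tilde{h}_X\mapsto\tilde{h}_{\theta X}$, $\tilde{u}_X\mapsto\tilde{u}_{\theta X}$ is well defined, bijective, squares to the identity and preserves the bracket of $\mathfrak{g}(\mathcal{D}/G)_{(q-1)}$. The key input is that $\theta$ is a triangle autoequivalence of $\mathcal{D}/G$ with $\theta^2\cong\id$ (Lemma~\ref{lem:theta-in-M}).

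\emph{Well-definedness and bijectivity.} Since $\theta$ is an additive autoequivalence, it sends indecomposables to indecomposables and respects isomorphism classes. Hence it permutes $\operatorname{ind}\mathcal{D}/G$, and $\tilde{u}_X\mapsto\tilde{u}_{\theta X}$ is an automorphism of the free group $\tilde{\mathfrak{n}}$. Since $\theta$ is exact, it sends triangles to triangles, so it induces an automorphism of $\go(\mathcal{D}/G)$ with $\tilde{h}_X\mapsto\tilde{h}_{\theta X}$. A ring isomorphism $\End(X)\cong\End(\theta X)$ preserves radicals, so $\tilde{d}_{\theta X}=\tilde{d}_X$. Consequently $\theta$ maps the generators $\tilde{h}_M/\tilde{d}_M$ of $\tilde{\mathfrak{h}}$ to generators and preserves $\tilde{\mathfrak{h}}$. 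The map is $\mathbb{Z}$-linear, so it descends to the quotient by $(q-1)$. Because $\theta^2\cong\id$, it is an involution.

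\emph{Compatibility with the bracket.} Three invariances are needed.
\begin{itemize}
\item[(i)] Hall numbers: $g^{\theta L}_{\theta Y,\theta X}=g^L_{Y,X}$. The functor $\theta$ gives a bijection $W(X,Y;L)\to W(\theta X,\theta Y;\theta L)$, using the natural isomorphism $\theta\Sigma\cong\Sigma\theta$ to transport the third morphism. This bijection intertwines the actions of $\Aut(X)\times\Aut(Y)$ and $\Aut(\theta X)\times\Aut(\theta Y)$, so it induces a bijection of the orbit sets $V$. Alternatively, one can use Proposition~\ref{pro:Hall number} together with the bijection $\Hom(L,Y)_{\Sigma X}\to\Hom(\theta L,\theta Y)_{\Sigma\theta X}$, which is equivariant for $\Aut(Y)\cong\Aut(\theta Y)$. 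It follows that $\tilde{\gamma}$ is invariant.
\item[(ii)] The form: $(\tilde{h}_{\theta X},\tilde{h}_{\theta Y})_{\mathcal{D}/G}=(\tilde{h}_X,\tilde{h}_Y)_{\mathcal{D}/G}$. This holds because $\theta$ induces $\mathbb{F}$-linear isomorphisms $\Hom(X,Y)\cong\Hom(\theta X,\theta Y)$ and $\Hom(X,\Sigma Y)\cong\Hom(\theta X,\Sigma\theta Y)$.
\item[(iii)] The delta term: $X\cong\Sigma Y$ if and only if $\theta X\cong\Sigma\theta Y$.
\end{itemize}
Given (i)–(iii), apply $\theta$ to each defining formula (1)–(3) of the Lie bracket and reindex the sum over $L$ by $L\mapsto\theta L$. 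This shows $[\theta a,\theta b]=\theta[a,b]$ on basis elements, and bilinearity extends it to all elements.

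The main obstacle is (i), specifically ensuring that $\theta$ on the triangulated category $\mathcal{D}/G$ is genuinely a triangle functor: the natural isomorphism $\theta\Sigma\cong\Sigma\theta$ must be compatible with distinguished triangles, so that triangles are carried to triangles. This is provided by the construction of $\theta$ in Lemma~\ref{lem:theta-in-M} as the functor induced by a dg autoequivalence of $\mathcal{B}$. Such a functor is automatically triangulated on $\operatorname{per}(\mathcal{B})=\mathcal{M}_\theta\simeq\mathcal{D}/G$.
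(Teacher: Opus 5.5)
Your proposal is correct and follows the same approach as the paper. The paper simply asserts the lemma as a direct consequence of the definition of the Ringel--Hall Lie bracket and Lemma~\ref{lem:theta-in-M}. You have written out the routine checks that this leaves implicit: under the triangle autoequivalence $\theta$, the Hall numbers, the symmetric Euler form, $\tilde{d}_X$ and the $\delta$-term are all invariant.
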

	
	The following is also well known, which will be used in the calculation of $\mathfrak{g}(\mathcal{D}/G)_{(q-1)}$. 
	\begin{lemma}\label{lem:s.e.s-induce-tri}
		Let $M\xrightarrow{f} L\xrightarrow{g} N\rightarrow \Sigma M$ be a triangle in $\mathcal{D}$ such that $M,N\in\rep(Q)$. Then $0\rightarrow M\xrightarrow{f} L\xrightarrow{g} N \rightarrow 0$ is a short exact sequence in $\rep(Q)$.
	\end{lemma}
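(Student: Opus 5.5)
The plan is to identify $L$ with a module by showing that its cohomology is concentrated in degree $0$. Since $\mathcal{D}=\mathcal{D}^b(\rep(Q))$ is the derived category of a hereditary abelian category, I will use the standard fact that every object decomposes as $L\cong\bigoplus_{p\in\mathbb{Z}}\Sigma^{-p}H^p(L)$. It therefore suffices to show $H^p(L)=0$ for $p\neq 0$. Once that holds, the triangle is isomorphic to one in which all three terms lie in $\rep(Q)$, and the remaining work is to extract the short exact sequence.

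\emph{Step 1 (vanishing).} Apply the cohomological functor $H^\bullet$ to the triangle $M\xrightarrow{f}L\xrightarrow{g}N\to\Sigma M$. Since $H^p(M)=H^p(N)=0$ for $p\ne 0$, the long exact sequence
\[
\cdots\to H^{p}(M)\to H^p(L)\to H^p(N)\to H^{p+1}(M)\to\cdots
\]
gives $H^p(L)=0$ for every $p\ne 0$. In degree $0$ it gives the sequence
\[
0=H^{-1}(N)\to H^0(M)\xrightarrow{H^0(f)}H^0(L)\xrightarrow{H^0(g)}H^0(N)\to H^1(M)=0.
\]
This is a short exact sequence in $\rep(Q)$, because the heart of the standard t-structure is $\rep(Q)$.

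\emph{Step 2 (identification).} By Step 1, $L$ lies in the essential image of $\rep(Q)$, and $H^0$ restricted to that image is isomorphic to the inclusion. Under the identifications $M=H^0(M)$, $L\cong H^0(L)$ and $N=H^0(N)$, the maps $f$ and $g$ correspond to $H^0(f)$ and $H^0(g)$. This uses only that $\rep(Q)\to\mathcal{D}$ is fully faithful, so that $f$ and $g$ are honest module morphisms. Hence $0\to M\xrightarrow{f}L\xrightarrow{g}N\to 0$ is exact.

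I do not expect a genuine obstacle; the argument needs no more than the bookkeeping of the identification $L\cong H^0(L)$. As an alternative route, one can avoid the decomposition altogether. The object $L$ lies in $\mathcal{D}^{\le 0}\cap\mathcal{D}^{\ge 0}$ because this heart is extension-closed, and the triangles whose vertices all lie in the heart are exactly the short exact sequences. The hereditary hypothesis is therefore not even essential here.
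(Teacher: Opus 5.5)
The paper gives no proof of this lemma; it is quoted as well known. Your argument stands on its own, and it is correct: it is the standard proof.

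The long exact cohomology sequence gives $H^p(L)=0$ for $p\neq 0$, and it makes $0\to H^0(M)\to H^0(L)\to H^0(N)\to 0$ exact. A complex with cohomology concentrated in degree $0$ is isomorphic in $\mathcal{D}$ to its $H^0$ via the truncation quasi-isomorphisms $L\leftarrow\tau_{\le 0}L\to H^0(L)$. So, as you observe yourself, the splitting $L\cong\bigoplus_p\Sigma^{-p}H^p(L)$ is dispensable; that splitting is the only place heredity would enter. The lemma therefore holds in $\mathcal{D}^b$ of any abelian category.

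Strictly speaking, what you obtain is the exact sequence $0\to M\xrightarrow{\phi f}H^0(L)\xrightarrow{g\phi^{-1}}N\to 0$ for a fixed isomorphism $\phi\colon L\to H^0(L)$. That is the intended meaning of the statement.

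One further line is worth adding. The paper later uses the lemma in the slightly stronger form ``the triangle is induced by a short exact sequence of $\rep(Q)$'', for instance in the proof of Proposition~\ref{pro:Serre-relation}. Let $h'\colon N\to\Sigma M$ be the connecting morphism of your short exact sequence. Apply TR3 to $(1_M,1_{H^0(L)})$ between $M\xrightarrow{\phi f}H^0(L)\xrightarrow{g\phi^{-1}}N\xrightarrow{h}\Sigma M$ and the triangle induced by the short exact sequence. This yields $u\colon N\to N$ with $u\,g\phi^{-1}=g\phi^{-1}$ and $h'u=h$. Since $g\phi^{-1}$ is an epimorphism in $\rep(Q)$ and $\rep(Q)\to\mathcal{D}$ is fully faithful, $u=1_N$, hence $h=h'$. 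So the given triangle is isomorphic, via $(1_M,\phi,1_N)$, to the triangle induced by the short exact sequence.
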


	
	Now we prove that $\mathfrak{g}(\mathcal{D}/G)_{(q-1)}$ satisfies the Serre relations of $\operatorname{gim}(C)$.
	\begin{proposition}\label{pro:Serre-relation}
		The following relations hold in the Lie algebra $\mathfrak{g}(\mathcal{D}/G)_{(q-1)}$:
		\begin{itemize}
			\item[(1)] For $1\leq i\neq j\leq n$ and $c_{ij}\leq 0$, 
			\begin{align}
				&[\tilde{u}_{{S_i}}, \tilde{u}_{{S_{\bar{j}}}}]=[\tilde{u}_{{S_{\bar i}}}, \tilde{u}_{{S_j}}]=0,\\
				&(\operatorname{ad} \tilde{u}_{{S_i}})^{-c_{ij}+1} \tilde{u}_{{S_j}}=0= (\operatorname{ad} \tilde{u}_{{S_{\bar i}}})^{-c_{ij}+1} \tilde{u}_{{S_{\bar j}}};\label{rel:serre-rel-<0}
			\end{align}
			
			\item[(2)] For $1\leq i\neq j\leq n$ and $c_{ij}> 0$,
			\begin{align}
				&[\tilde{u}_{{S_i}},\tilde{u}_{{S_j}}]=[\tilde{u}_{{S_{\bar i}}}, \tilde{u}_{{S_{\bar j}}}]=0,\label{rel:serre-rel->00}\\
				&(\operatorname{ad} \tilde{u}_{{S_i}})^{c_{ij}+1} \tilde{u}_{{S_{\bar j}}}=0= (\operatorname{ad} \tilde{u}_{{S_{\bar i}}})^{c_{ij}+1} \tilde{u}_{{S_j}}.\label{rel:serre-rel->01}
			\end{align}
			
		\end{itemize}
		
	\end{proposition}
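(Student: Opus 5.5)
The plan is to reduce every relation to a computation inside a small full subquiver $Q'$ of $Q$ on which the triangulated structure of $\mathcal{D}/G$ coincides with that of the root category $\mathcal{D}/\Sigma^2$. The Serre relations then follow from Theorem~\ref{thm:PX-realization} applied to $Q'$, or equivalently from the Ringel--Hall Lie algebra of $\rep(Q')$. By Lemma~\ref{lem:involution-Lie-algebra}, $\theta$ is a Lie algebra involution sending $\tilde u_{S_i}\mapsto \tilde u_{S_{\bar i}}$. So in each item it suffices to prove the first of the two symmetric identities; the other is its image under $\theta$.

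\emph{Vanishing brackets.} Take first $c_{ij}\le 0$ and the bracket $[\tilde u_{S_i},\tilde u_{S_{\bar j}}]$. We compute
\[
\Hom_{\mathcal{D}/G}(S_{\bar j},\Sigma S_i)=\Ext^1(S_{\bar j},S_i)\oplus\Hom(S_{\bar j},S_{\bar i}),
\]
as in the proof of Lemma~\ref{lem:Euler-form-D/G}. The first summand vanishes because there are no arrows between $i$ and $\bar j$ when $c_{ij}\le 0$, and the second vanishes because $i\neq j$. The same holds with the roles of $S_i$ and $S_{\bar j}$ exchanged. Hence the only triangles with outer terms $S_i,S_{\bar j}$ are split, and no indecomposable $L$ contributes. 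The $\delta$-term is also absent, since $\Sigma S_{\bar j}\cong\theta S_{\bar j}=S_j\not\cong S_i$ by Lemma~\ref{lem:theta-in-M}. So the bracket is $0$. The case $c_{ij}>0$ and $[\tilde u_{S_i},\tilde u_{S_j}]$ is identical: there are no arrows between $i$ and $j$, and $\Hom(S_j,S_{\bar i})=0$.

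\emph{Serre relations.} Let $Q'$ be the full valued subquiver on $\{i,j\}$ if $c_{ij}\le 0$, and on $\{i,\bar j\}$ if $c_{ij}>0$. In either case $Q'$ carries $n_{ij}$ arrows, so its GCM entry is $-|c_{ij}|$. Moreover $\theta Q'$ has vertex set disjoint from $Q'$, and there are no arrows between $Q'$ and $\theta Q'$, because $c_{ii}=2$ gives no arrows between $i$ and $\bar i$.

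For $X,Y\in\rep(Q')$ I will then establish four facts.
\begin{itemize}
\item[(a)] $\End_{\mathcal{D}/G}(X)=\End_{\mathcal{D}}(X)$. The extra summands are $\Hom(X,\Sigma^{\pm1}\theta X)$, and these vanish by disjointness of supports together with heredity.
\item[(b)] $\Hom_{\mathcal{D}/G}(Y,\Sigma X)=\Ext^1(Y,X)$, since $\Hom(Y,\theta X)=0$.
\item[(c)] Every triangle $X\to L\to Y\to\Sigma X$ in $\mathcal{D}/G$ is the image of a triangle in $\mathcal{D}$. The connecting morphism lies in a single graded component, and the projection functor is triangulated. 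By Lemma~\ref{lem:s.e.s-induce-tri} this triangle comes from a short exact sequence in $\rep(Q')$, so $L\in\rep(Q')$.
\item[(d)] $X\not\cong\Sigma Y$ in $\mathcal{D}/G$, since $\Sigma Y\cong\theta Y$ is supported on $\theta Q'$.
\end{itemize}
Using Proposition~\ref{pro:Hall number} to count orbits $\Hom(L,Y)^*_{\Sigma X}$, facts (a)--(d) give $g^L_{Y,X}=h^L_{Y,X}$ for all $X,Y,L$ indecomposable in $\rep(Q')$. The same facts hold in $\mathcal{D}/\Sigma^2$. Hence the $\mathbb{Z}$-span of $\{\tilde u_M\mid M\in\operatorname{ind}\rep(Q')\}$ is closed under the bracket, and it has the same structure constants as the span of $\{u_M\}$ in $\mathfrak{g}(\mathcal{D}/\Sigma^2)_{(q-1)}$, with no Cartan terms. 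In the latter, $(\operatorname{ad}u_{S_i})^{1-a}u_{S_{j'}}=0$ holds by Peng--Xiao, where $j'\in\{j,\bar j\}$ is the second vertex of $Q'$ and $a=-|c_{ij}|$. This relation only involves $\rep(Q')$, so it transports to \eqref{rel:serre-rel-<0} and \eqref{rel:serre-rel->01}.

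The main obstacle is step (c) together with the orbit comparison in (a). One must check carefully that every triangle in the orbit category with outer terms in $\rep(Q')$ lifts to $\mathcal{D}$, and that the $\Aut$-orbits agree. This is where the disjointness of $Q'$ and $\theta Q'$, and the absence of arrows between them, is essential.
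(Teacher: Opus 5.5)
Your proof is correct. For the Serre relations it takes a different route from the paper. The vanishing-bracket part matches the paper's argument: both summands of $\Hom_{\mathcal{D}/G}(S_{\bar j},\Sigma S_i)$ and of its mirror vanish, and the $\delta$-term is absent.

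\textbf{How the routes differ.} The paper does not pass to a subquiver. Assuming $i<j$, it recursively builds the sets $\mathcal{X}_t$ of middle terms that occur in $(\operatorname{ad}\tilde u_{S_i})^t\tilde u_{S_j}$. It then proves two facts:
\begin{itemize}
\item $\Hom_{\mathcal{D}/G}(X_t,\Sigma S_i)=0$, so every triangle $S_i\to L\to X_t\to\Sigma S_i$ splits;
\item $\Hom_{\mathcal{D}}(S_i,\theta X_t)=0$, so every triangle $X_{t-1}\to X_t\to S_i\to\Sigma X_{t-1}$ comes from a short exact sequence.
\end{itemize}
It concludes that the computation ``reduces to $\rep(Q)$'' and omits the case $c_{ij}>0$ as similar.

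You instead localize once to the rank-two full subquiver $Q'$. You use that $Q'$ and $\theta Q'$ have disjoint vertex sets with no arrows between them. This shows that the whole span of $\{\tilde u_M\mid M\in\operatorname{ind}\rep(Q')\}$ is a subalgebra, and that $\tilde u_M\mapsto u_M$ identifies it with the corresponding subalgebra of $\mathfrak{g}(\mathcal{D}/\Sigma^2)_{(q-1)}$.

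\textbf{What each approach buys.} Yours handles $c_{ij}\le 0$ and $c_{ij}>0$ uniformly and needs no ordering $i<j$. It also makes the comparison of Hall numbers explicit, which the paper leaves implicit. The paper's object-by-object bookkeeping is what carries over to Lemma~\ref{lem:ell-serre-relation}. There $\Ext^2_A(S_1,S_n)\neq 0$ links $Q'=\{1,\bar n\}$ to $\theta Q'$, so your disjointness argument is no longer available and the modified claim (b$'$) is needed.

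\textbf{Two points to tighten.}
\begin{itemize}
\item The absence of arrows between $Q'$ and $\theta Q'$ does not follow from $c_{ii}=2$ alone. When $c_{ij}\le 0$ you also need that there are no arrows between $i$ and $\bar j$, or between $j$ and $\bar i$. When $c_{ij}>0$ you need none between $i$ and $j$, or between $\bar i$ and $\bar j$. This is the same sign observation you already used for the vanishing brackets.
\item For $g^L_{Y,X}=h^L_{Y,X}$, fact (a) about endomorphism rings is not enough. First, you need $\Hom_{\mathcal{D}/G}(L,Y)=\Hom_{\mathcal{D}}(L,Y)$ for all $L,Y\in\rep(Q')$. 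This holds because the only extra component is $\Ext^1(L,\theta Y)$, which vanishes by disjointness. Second, you need the condition $\operatorname{Cone}(l)\cong\Sigma X$ to mean the same thing in $\mathcal{D}$, $\mathcal{D}/G$ and $\mathcal{D}/\Sigma^2$. This holds because $\operatorname{Cone}(l)\cong\Sigma\ker l\oplus\operatorname{coker}l$, and neither the $G$-orbit nor the $\Sigma^2$-orbit of $\Sigma X$ contains a nonzero module supported on $Q'$. So in all three categories the condition forces $l$ to be surjective with $\ker l\cong X$.
\end{itemize}
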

	\begin{proof}
		Let us assume that $c_{ij}\leq 0$. It follows that
		\[
		\Hom_{\mathcal{D}/G}(S_i,\Sigma S_{\bar{j}})=\Hom_{\mathcal{D}}(S_i,\Sigma S_{\bar{j}})\oplus \Hom_{\mathcal{D}}(S_i,S_j)=0,
		\]
		and 
		\[
		\Hom_{\mathcal{D}/G}(S_{\bar{j}},\Sigma S_{i})=\Hom_{\mathcal{D}}(S_{\bar{j}},\Sigma S_{i})\oplus \Hom_{\mathcal{D}}(S_{\bar{j}},S_{\bar{i}})=0.
		\]
		Consequently, $\tilde{\gamma}_{S_i,S_{\bar{j}}}^L=0$ for any indecomposable object $L\in \mathcal{D}/G$. On the other hand, we conclude that $S_i\not\cong \Sigma S_{\bar{j}}$ by Lemma \ref{lem:basis-go-M}. Hence $[\tilde{u}_{S_i},\tilde{u}_{S_{\bar{j}}}]=0$. By applying the involution $\theta$ of $\mathfrak{g}(\mathcal{D}/G)_{(q-1)}$ to $[\tilde{u}_{S_i},\tilde{u}_{S_{\bar{j}}}]=0$, we obtain $[\tilde{u}_{S_{\bar{i}}},\tilde{u}_{S_j}]=0$. 
		
		Now we turn to the relation \eqref{rel:serre-rel-<0}. Without loss of generality, we may assume that $i<j$; the case $j<i$ can be proved by a similar argument. Note that $i<j$ implies that \[\Hom_{\mathcal{D}/G}(S_j,\Sigma S_i)=\Hom_{\mathcal{D}}(S_j,\Sigma S_i)\oplus \Hom_{\mathcal{D}}(S_j,S_{\bar{i}})=0.\]
		Consequently, $g_{S_j,S_i}^L=0$ for any indecomposable object $L\in \mathcal{D}/G$. We recursively define collections $\mathcal{X}_t(t\geq 0)$ of objects of $\mathcal{D}$ as follows:
		\begin{itemize}
			\item $\mathcal{X}_0=\{S_j\}$;
			\item Suppose that $\mathcal{X}_{t-1}$ is defined, then 
			\[
			\mathcal{X}_t=\{X\in \mathcal{D}\mid \text{$\exists$ a triangle $X_{t-1}\rightarrow X\rightarrow S_i\rightarrow \Sigma X_{t-1}$ in $\mathcal{D}/G$ with $X_{t-1}\in \mathcal{X}_{t-1}$}\}.
			\]
		\end{itemize}
		We claim that
		\begin{enumerate}
			\item[(a)] $\Hom_{\mathcal{D}/G}(X_t,\Sigma S_i)=0$ for any $X_t\in \mathcal{X}_t$ and $t\geq 0$. Consequently, every triangle of the form $S_i\rightarrow L\rightarrow X_t\rightarrow \Sigma S_i$ in $\mathcal{D}/G$ is split, where $X_t\in \mathcal{X}_t$.
			\item[(b)] $\mathcal{X}_t\subseteq \rep(Q)$ and $\Hom_{\mathcal{D}}(S_i,\theta X_{t})=0$ for any $X_{t}\in \mathcal{X}_{t}$ and $t> 0$. In particular, $\Hom_{\mathcal{D}/G}(S_i,\Sigma X_t)=\Hom_{\mathcal{D}}(S_i,\Sigma X_t)$,  and every triangle in the definition of $\mathcal{X}_t$ is the image of a triangle of $\mathcal{D}$ induced by a short exact sequence of $\rep(Q)$.
		\end{enumerate}
		We already know that claim $(a)$ holds for $t=0$. Assume that it holds for $t-1$.  For $X_t\in \mathcal{X}_t$, there is a triangle $X_{t-1}\rightarrow X_t\rightarrow S_i\rightarrow \Sigma X_{t-1}$ in $\mathcal{D}/G$. Applying $\Hom_{\mathcal{D}/G}(-,\Sigma S_i)$ to the triangle above yields an exact sequence
		\[
		0=\Hom_{\mathcal{D}/G}(S_i,\Sigma S_i)\rightarrow \Hom_{\mathcal{D}/G}(X_t,\Sigma S_i)\rightarrow \Hom_{\mathcal{D}/G}(X_{t-1},\Sigma S_i)=0,
		\]
		which implies that $\Hom_{\mathcal{D}/G}(X_t,\Sigma S_i)=0$.
		
		Let us prove the claim $(b)$. Let $X_1\in \mathcal{X}_1$ and there is a triangle
		\begin{eqnarray}\label{triangle-M1}
			S_j\stackrel{\tilde{f}}{\longrightarrow}  X_1\stackrel{\tilde{g}}{\longrightarrow} S_i\longrightarrow \Sigma S_j 
		\end{eqnarray}
		in $\mathcal{D}/G$. Note that $\Hom_{\mathcal{D}/G}(S_i,\Sigma S_j)=\Hom_{\mathcal{D}}(S_i,\Sigma S_j)$, which implies that the triangle \eqref{triangle-M1} is the image of a triangle
		\begin{equation}\label{tri:d-1}
			S_j\stackrel{f}{\longrightarrow}  X_1\stackrel{g}{\longrightarrow}  S_i\longrightarrow \Sigma S_j
		\end{equation}
		of $\mathcal{D}$. By Lemma \ref{lem:s.e.s-induce-tri}, $X_1\in \rep(Q)$ and the triangle \eqref{tri:d-1} is induced by a short exact sequence of $\rep(Q)$. Applying $\Hom_{\mathcal{D}}(S_i,\theta-)$ to \eqref{tri:d-1} yields an exact sequence
		\[
		0=\Hom_{\mathcal{D}}(S_i,\theta S_j)\rightarrow \Hom_{\mathcal{D}}(S_i,\theta X_1)\rightarrow \Hom_{\mathcal{D}}(S_i,\theta S_i)=0,
		\]
		which implies that $\Hom_{\mathcal{D}}(S_i,\theta X_1)=0$. Now assume that  claim $(b)$ is proved for $t-1$. For $X_{t-1}\in \mathcal{X}_{t-1}$, by $\Hom_{\mathcal{D}}(S_i,\theta X_{t-1})=0$, we compute \[\Hom_{\mathcal{D}/G}(S_i,\Sigma X_{t-1})=\Hom_{\mathcal{D}}(S_i,\Sigma X_{t-1})\oplus \Hom_{\mathcal{D}}(S_i,\theta X_{t-1})=\Hom_{\mathcal{D}}(S_i,\Sigma X_{t-1}).\]
		It follows that every triangle $X_{t-1}\rightarrow X_t\rightarrow S_i\rightarrow \Sigma X_{t-1}$ in $\mathcal{D}/G$ with $X_{t-1}\in \mathcal{X}_{t-1}$ is the image of a triangle of $\mathcal{D}$, which is induced by a short exact sequence of $\rep(Q)$ by Lemma \ref{lem:s.e.s-induce-tri}. Furthermore, $X_t\in \rep(Q)$. Applying $\Hom_{\mathcal{D}}(S_i,\theta-)$ to the triangle above yields $\Hom_{\mathcal{D}}(S_i,\theta X_t)=0$.
		
		By Lemma~\ref{lem:basis-go-M}, $\tilde{h}_{S_i}$ and $\tilde{h}_{S_j}$
		are linearly independent. It follows that every term $\tilde{u}_M$
		appearing in the computation of
		$(\operatorname{ad}\tilde{u}_{S_i})^{-c_{ij}+1}\tilde{u}_{S_j}$
		satisfies $S_i\not\simeq \Sigma M$. In particular, no term of the form
		$\frac{\tilde{h}_{S_i}}{\tilde{d}_{S_i}}$ occurs. Moreover, by (a) and (b), every triangle with indecomposable middle term
		that appears in this computation is induced by a short exact sequence in
		$\rep(Q)$. Thus the computation is reduced to one in $\rep(Q)$. Hence
		$(\operatorname{ad}\tilde{u}_{S_i})^{-c_{ij}+1}\tilde{u}_{S_j}=0$
		follows from \cite{R90-Hall-poly-rep-fin,PX00-Tri-Cat-Kac}. Applying the involution $\theta$ to $(\operatorname{ad} \tilde{u}_{{S_i}})^{-c_{ij}+1} \tilde{u}_{{S_j}}=0$  yields $(\operatorname{ad} \tilde{u}_{{S_{\bar i}}})^{-c_{ij}+1} \tilde{u}_{{S_{\bar j}}}=0$. This finishes the proof of \eqref{rel:serre-rel-<0}.
		
		The relations \eqref{rel:serre-rel->00} and \eqref{rel:serre-rel->01} can be verified in a similar way, and we omit the details.
	\end{proof}
	
	\subsection{The Ringel-Hall Lie algebras $\mathfrak{g}(\mathcal{D}/\Sigma^2)_{(q-1)}$ and $\mathfrak{g}(\mathcal{D}/G)_{(q-1)}$}
	
	In this subsection, we investigate the relation between
	$\mathfrak{g}(\mathcal{D}/\Sigma^2)_{(q-1)}$ and
	$\mathfrak{g}(\mathcal{D}/G)_{(q-1)}$. We retain the notation and
	conventions introduced at the end of
	Subsection~\ref{ss:2-periodic-(C,D)}.
	
	\begin{lemma}\label{lem:tilde-d=d}
		Let $X\in \mathcal{D}$ be an indecomposable object. Then $\tilde{d}_X=d_X$.
	\end{lemma}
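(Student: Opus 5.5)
We reduce both quantities to the same division algebra, namely $\End_{\mathcal{D}}(X)/\rad\End_{\mathcal{D}}(X)$, by the argument used in the lemma describing indecomposable objects of $\mathcal{M}_\theta$. The key point is that in both orbit categories the endomorphism ring of $X$ splits as the degree-zero part $\End_{\mathcal{D}}(X)$ plus summands $\Hom_{\mathcal{D}}(X,F^pX)$ with $p\neq 0$, where $F=G$ or $F=\Sigma^2$. These extra summands lie in the radical.

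First, for $F=G=\theta\circ\Sigma$, we have
\[
\End_{\mathcal{D}/G}(X)=\bigoplus_{p\in\mathbb{Z}}\Hom_{\mathcal{D}}(X,G^pX),
\]
and that lemma already shows
\[
\rad\End_{\mathcal{D}/G}(X)=\rad\End_{\mathcal{D}}(X)\oplus\bigoplus_{p\neq0}\Hom_{\mathcal{D}}(X,G^pX).
\]
Hence $\tilde d_X=\dim_{\mathbb{F}}\End_{\mathcal{D}}(X)/\rad\End_{\mathcal{D}}(X)$. For completeness I would recheck why the $p\neq 0$ part is radical, since $\mathcal{D}$ is hereditary. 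Write $X=\Sigma^mM$ with $M\in\rep(Q)$ indecomposable; then $G^pX=\Sigma^{m+p}\theta^pM$, and $\Hom_{\mathcal{D}}(X,G^pX)\neq0$ forces $p\in\{0,1\}$. Similarly $\Hom_{\mathcal{D}}(G^pX,X)\ne 0$ forces $p\in\{0,-1\}$. A product of a degree $p$ and a degree $-p$ morphism with $p\neq0$ would be a composite $X\to\Sigma\theta M'\to \dots$, which returns to $X$ only through $\Ext^1$ followed by $\Ext^1$-type maps. Such a composite is a morphism $M\to \Sigma^2(\cdot)$ in degree zero after twisting, which vanishes by heredity, or it is non-invertible because it factors through a nonisomorphic object. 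So the ideal generated by the $p\neq 0$ parts is nilpotent (its square lies in degrees $\pm2$, which vanish) and meets $\End_{\mathcal{D}}(X)$ inside its radical.

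Second, we run the identical argument for $F=\Sigma^2$: $\Hom_{\mathcal{D}}(X,\Sigma^{2p}X)=0$ for $p\neq0$ by heredity, since $\Hom_{\mathcal{D}}(M,\Sigma^kM)=0$ unless $k\in\{0,1\}$. Therefore $\End_{\mathcal{D}/\Sigma^2}(X)=\End_{\mathcal{D}}(X)$ and $d_X=\dim_{\mathbb{F}}\End_{\mathcal{D}}(X)/\rad\End_{\mathcal{D}}(X)$. Comparing the two computations gives $\tilde d_X=d_X$.

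The main obstacle is justifying that the summand $\Hom_{\mathcal{D}}(X,GX)\cong\Ext^1(M,\theta M)$ lies in the radical of $\End_{\mathcal{D}/G}(X)$, that is, that adding it does not enlarge the top. I would handle this by the grading argument: the components of positive degree form a two-sided ideal $I$. Indeed, degree $-1$ components vanish when $m$ is fixed, because $\Hom_{\mathcal{D}}(X,G^{-1}X)=\Hom(M,\Sigma^{-1}\theta M)=0$. Moreover $I^2$ lies in degree $2$, where $\Hom(M,\Sigma^2M)=0$, so $I$ is a nilpotent ideal and is contained in the radical. The quotient by $I$ is $\End_{\mathcal{D}}(X)$, which is local, so the tops agree.
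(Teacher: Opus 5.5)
Your proof is correct and follows the paper's argument. In both orbit categories the endomorphism ring of $X$ is $\End_{\mathcal{D}}(X)$, plus, for $\mathcal{D}/G$, the summand $\Hom_{\mathcal{D}}(X,GX)$, and that summand lies in the radical, so both tops equal $\End_{\mathcal{D}}(X)/\rad\End_{\mathcal{D}}(X)$. Your final-paragraph justification, that the degree-one part is a square-zero ideal because $G^2X\cong\Sigma^2X$ and $\Hom_{\mathcal{D}}(X,\Sigma^2X)=0$, cleanly supplies the radical description the paper only asserts, and it supersedes your less precise middle paragraph.
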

	\begin{proof}
		Without loss of generality, we may assume that $X\in \rep(Q)$. It follows that \[\End_{\mathcal{D}/G}(X)=\Hom_{\mathcal{D}}(X,X)\oplus \Hom_{\mathcal{D}}(X,\Sigma \theta X) \ \text{and}\ \End_{\mathcal{D}/\Sigma^2}(X)=\Hom_{\mathcal{D}}(X,X).\] Furthermore, \[\rad \End_{\mathcal{D}/G}(X)=\rad \End_{\mathcal{D}}(X)\oplus \Hom_{\mathcal{D}}(X,\Sigma \theta X)\ \text{and}\ \rad \End_{\mathcal{D}/\Sigma^2}(X)=\rad \End_{\mathcal{D}}(X).\] Consequently, $\tilde{d}_X=d_X$.
	\end{proof}
	\begin{lemma}\label{lem:homom-cartan-subalgebra}
		There is  a group homomorphism $\psi:\go(\mathcal{D}/G)\rightarrow \go(\mathcal{D}/\Sigma^2)$ given by
		$\tilde{h}_X\mapsto h_X+h_{GX}$, $\forall X\in \mathcal{D}/G$. It further induces a homomorphism $\psi:\tilde{\mathfrak{h}}\rightarrow \mathfrak{h}$.
	\end{lemma}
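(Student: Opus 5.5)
Since $\tilde h_{S_1},\dots,\tilde h_{S_n}$ form a $\mathbb{Z}$-basis of $\go(\mathcal{D}/G)$ by Lemma~\ref{lem:basis-go-M}, I would \emph{define} $\psi$ as the unique group homomorphism with $\psi(\tilde h_{S_i})=h_{S_i}+h_{GS_i}=h_{S_i}-h_{S_{\bar i}}$, using $h_{\Sigma Y}=-h_Y$ in $\go(\mathcal{D}/\Sigma^2)$. The task is then to check that $\psi(\tilde h_X)=h_X+h_{GX}$ for every $X\in\mathcal{D}$.

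Consider the assignment $\kappa\colon\go(\mathcal{D})\to\go(\mathcal{D}/\Sigma^2)$, $\hat X\mapsto h_X+h_{GX}=h_X-h_{\theta X}$. It is additive on triangles of $\mathcal{D}$, because both $X\mapsto h_X$ and $X\mapsto h_{\theta X}$ are: $\theta$ and $\pi_H$ are triangulated functors. It is therefore a well-defined homomorphism. Moreover $\kappa(\widehat{GX})=h_{GX}+h_{G^2X}=h_{GX}+h_X=\kappa(\hat X)$, since $G^2=\theta^2\Sigma^2\cong\Sigma^2$ and $h_{\Sigma^2 X}=h_X$. On the basis of $\go(\mathcal{D})$ we get $\kappa(\hat S_i)=h_{S_i}-h_{S_{\bar i}}$ and $\kappa(\hat S_{\bar i})=-(h_{S_i}-h_{S_{\bar i}})$.

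The projection $\hat\pi\colon\go(\mathcal{D})\to\go(\mathcal{D}/G)$ from Lemma~\ref{lem:basis-go-M} is surjective and sends $\hat S_i\mapsto\tilde h_{S_i}$ and $\hat S_{\bar i}\mapsto -\tilde h_{S_i}$. Hence its kernel is generated by $\hat S_i+\hat S_{\bar i}$. Since $\kappa(\hat S_i+\hat S_{\bar i})=0$, the map $\kappa$ factors through $\hat\pi$, and the induced map agrees with $\psi$ on the basis. Therefore $\psi(\tilde h_X)=\kappa(\hat X)=h_X+h_{GX}$ for every object $X$. The only real subtlety in the whole argument is identifying $\ker\hat\pi$; it is handled by the rank count from Lemma~\ref{lem:basis-go-M}, since $\go(\mathcal{D})$ is free of rank $2n$ and $\go(\mathcal{D}/G)$ is free of rank $n$.

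For the second claim, tensor with $\mathbb{Q}$ to get $\psi_{\mathbb{Q}}$. It suffices to check generators: for indecomposable $M$, $\psi(\tilde h_M/\tilde d_M)=h_M/d_M+h_{GM}/d_M$. By Lemma~\ref{lem:tilde-d=d}, $\tilde d_M=d_M$. Since $G$ is an autoequivalence of $\mathcal{D}$ commuting with $\Sigma^2$, it induces an autoequivalence of $\mathcal{D}/\Sigma^2$, so $d_{GM}=d_M$ and $GM$ is indecomposable there. Hence $h_{GM}/d_M=h_{GM}/d_{GM}\in\mathfrak{h}$, and $\psi(\tilde{\mathfrak h})\subseteq\mathfrak h$.
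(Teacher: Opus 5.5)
Your proof is correct, but it obtains the map by a different route than the paper.

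\textbf{The paper's route.} The paper sets $\psi:=\hat\pi_H\circ\phi$, where $\phi\colon \tilde h_X\mapsto \widehat{H^0(\pi_\rho X)}-\widehat{H^1(\pi_\rho X)}$ is the cohomological homomorphism built in the proof of Lemma~\ref{lem:basis-go-M}. Well-definedness is inherited from that proof. The formula $\tilde h_X\mapsto h_X+h_{GX}$ comes from $\pi_\rho X\cong\bigoplus_p G^pX$, which gives
$\phi(\tilde h_X)=\sum_p(-1)^p(1-\hat\theta)\widehat{H^pX}=\hat X-\widehat{\theta X}$,
where $\hat\theta$ is the automorphism of $\go(\mathcal D)$ induced by $\theta$. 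The paper leaves this computation implicit.

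\textbf{Your route.} You use only the conclusion of Lemma~\ref{lem:basis-go-M}: you define $\psi$ on the basis $\tilde h_{S_i}$ and identify $\psi\circ\hat\pi$ with $\kappa=\hat\pi_H\circ(1-\hat\theta)$. Since the paper's composite $\hat\pi_H\circ\phi\circ\hat\pi$ also equals $\kappa$ and $\hat\pi$ is surjective, the two maps coincide.

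\textbf{What each buys.} Your version avoids any cohomology computation and makes the formula for arbitrary $X$ explicit. The price is that it rests on the linear independence of the $\tilde h_{S_i}$, whose proof in the paper itself goes through $\phi$. The paper's version gives an intrinsic, basis-free formula on all of $\go(\mathcal M_\theta)$.

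\textbf{Two small remarks.} First, identifying $\ker\hat\pi$ is unnecessary: $\psi\circ\hat\pi$ and $\kappa$ are homomorphisms that agree on the basis $\hat S_i,\hat S_{\bar i}$ of $\go(\mathcal D)$, so they are equal. Second, if you do want the kernel, a bare rank count does not suffice. You also need that $\bigoplus_i\mathbb Z(\hat S_i+\hat S_{\bar i})$ is a direct summand of $\go(\mathcal D)$, with complement $\bigoplus_i\mathbb Z\hat S_i$; this holds, so the claim is right.

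Your check that $GM$ is indecomposable in $\mathcal D/\Sigma^2$ with $d_{GM}=d_M$ supplies a detail the paper passes over when it calls the statement about $\tilde{\mathfrak h}$ a direct consequence of Lemma~\ref{lem:tilde-d=d}.
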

	\begin{proof}
		According to Lemma \ref{lem:iso-groth-root-cat}, we have an isomorphism
		$\go(\mathcal{D})\cong \go(\mathcal{D}/\Sigma^2)$. Now $\psi$ is the composition of the homomorphism $\phi:\go(\mathcal{D}/G)\rightarrow \go(\mathcal{D})$ in Lemma \ref{lem:basis-go-M} with this isomorphism. The last statement is a direct consequence of Lemma \ref{lem:tilde-d=d}.
	\end{proof}

	\begin{lemma}\label{lem:hall-number-root-cat-zero}
		Let $L,M$ be indecomposable representations of $(Q,\mathbf{d})$. Then \[h_{S_i,GM}^L=h_{GS_i,GM}^L=h_{GM,S_i}^L=h_{GM,GS_i}^L=0\] for any $i\in Q_0$.
	\end{lemma}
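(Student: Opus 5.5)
We will argue degree-wise in the root category $\mathcal{D}/\Sigma^2$, whose indecomposables are the indecomposable representations $X\in\rep(Q)$ together with their suspensions $\Sigma X$. The key observation is that $GM=\theta\Sigma M=\Sigma(\theta M)$ with $\theta M\in\rep(Q)$ indecomposable, and $GS_i=\Sigma S_{\bar i}$. So every Hall number in the statement counts triangles in $\mathcal{D}/\Sigma^2$ of the form
\[
Y\longrightarrow L\longrightarrow X\longrightarrow \Sigma Y
\]
with one end term a module and the other the suspension of a module (for the first and third numbers), or with both end terms suspensions of modules (for the second and fourth numbers).

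For the second and fourth numbers we apply the autoequivalence $\Sigma$ of $\mathcal{D}/\Sigma^2$, which preserves Hall numbers. This gives $h_{GS_i,GM}^L=h_{S_{\bar i},\theta M}^{\Sigma^{-1}L}$. Since $\Sigma^{-1}L\cong\Sigma L$ in $\mathcal{D}/\Sigma^2$ and $L\in\rep(Q)$, this is the number of triangles $\theta M\to\Sigma L\to S_{\bar i}\to\Sigma\theta M$. Both end terms lie in $\rep(Q)$, and $\Hom_{\mathcal{D}/\Sigma^2}(S_{\bar i},\Sigma\theta M)=\Ext^1(S_{\bar i},\theta M)$. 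Since $\mathcal{D}$ is hereditary, every such triangle in the root category lifts to a triangle in $\mathcal{D}$ with both end terms in degree $0$. By Lemma~\ref{lem:s.e.s-induce-tri}, its middle term is then a module, i.e. lies in degree $0$. A nonzero indecomposable module is not isomorphic in the root category to the suspension of a module (compare degrees, or use Lemma~\ref{lem:iso-groth-root-cat}, where $h_{\Sigma L}=-h_L$ has negative coordinates). Hence $\Sigma L$ cannot be the middle term, and this number is $0$. The fourth number is handled in the same way, with the roles of the two end terms exchanged.

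For the first number, $h_{S_i,GM}^L$ counts triangles $\Sigma\theta M\to L\to S_i\to \Sigma^2\theta M\cong\theta M$. The connecting morphism lies in $\Hom_{\mathcal{D}/\Sigma^2}(S_i,\theta M)=\Hom_{\mathcal{D}}(S_i,\theta M)$. Lifting to $\mathcal{D}$ and rotating, the triangle becomes $L\to S_i\xrightarrow{h}\theta M\to\Sigma L$. Its cone-type middle term is $\ker h\oplus\Sigma\operatorname{coker} h$, because in a hereditary category every complex is quasi-isomorphic to the direct sum of its shifted cohomologies. For $L$ to be an indecomposable module, we need $\operatorname{coker}h=0$ and $L\cong\ker h\subseteq S_i$ nonzero. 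Since $S_i$ is simple, $h=0$, which forces $\theta M=0$, a contradiction. Alternatively, $\ker h=0$ makes the middle term $\Sigma\operatorname{coker}h$, again of the wrong degree. So this Hall number vanishes. The third number, $h_{GM,S_i}^L$, concerns triangles $S_i\to L\to\Sigma\theta M\to\Sigma S_i$ with connecting morphism in $\Hom(\theta M,S_i)$. The same kernel/cokernel analysis, now using surjectivity onto the simple $S_i$, forces the middle term to be $\ker(\theta M\to S_i)$ placed in degree $1$, or a decomposable sum, and never an indecomposable module $L$. Hence this number is also $0$.

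The main obstacle is justifying rigorously that every triangle in $\mathcal{D}/\Sigma^2$ between these objects lifts to $\mathcal{D}$ with the stated degrees, and that a module cannot be isomorphic to a shifted module in the root category. For the lifting we use the description $\Hom_{\mathcal{D}/\Sigma^2}(X,Y)=\Hom_{\mathcal{D}}(X,Y)\oplus\Hom_{\mathcal{D}}(X,\Sigma^{-2}Y)\oplus\cdots$, in which only one summand is nonzero for modules, because $\mathcal{D}$ is hereditary. Together with Theorem~\ref{Thm: tri-orb-cat}, this guarantees that triangles come from $\mathcal{D}$.
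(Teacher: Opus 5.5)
Your proof is correct, but it reaches the contradiction by a different mechanism than the paper. The paper writes out only the case $h^L_{S_i,GM}$. It lifts the triangle $GM\to L\to S_i\to\Sigma GM$ to $\mathcal{D}$, using $\Hom_{\mathcal{D}/\Sigma^2}(L,S_i)=\Hom_{\mathcal{D}}(L,S_i)$. It then reads off $\hat L=\hat S_i-\widehat{\theta M}$ in $\go(Q)$, which cannot be the class of a nonzero module because $\hat S_i$ is a basis vector and $\theta M\neq 0$.

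You instead determine the relevant object structurally and exclude $L$ by parity of degree. In the first and third cases you write it as $\ker h\oplus\Sigma\operatorname{coker}h$, using the splitting of complexes over a hereditary category. In the second and fourth cases you shift by $\Sigma$ and identify it as an extension of modules via Lemma~\ref{lem:s.e.s-induce-tri}.

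The numerical route is shorter and uniform. For the second and fourth numbers it gives $\hat L=-\hat S_{\bar i}-\widehat{\theta M}$, so neither a shift by $\Sigma$ nor a case split on whether $h=0$ is needed. By Lemma~\ref{lem:iso-groth-root-cat} it does not even require lifting triangles, since the relation $h_L=h_{S_i}+h_{GM}$ in $\go(\mathcal{D}/\Sigma^2)$ transports directly to $\go(\mathcal{D})$.

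Your route is longer but more informative. It identifies exactly which objects occur in these triangles, and it shows concretely where simplicity of $S_i$ enters: a nonzero map out of $S_i$ is injective, and a nonzero map into it is surjective.

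Two cosmetic slips, neither of which matters in $\mathcal{D}/\Sigma^2$. In the first case $L$ is the cocone of $h$, namely $\ker h\oplus\Sigma^{-1}\operatorname{coker}h$ in $\mathcal{D}$, rather than a ``middle term''. In the third case the kernel of $\theta M\to S_i$ appears shifted by $\Sigma$, i.e.\ in degree $-1$, not $1$.
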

	\begin{proof}
		Let us prove $h_{S_i,GM}^L=0$, the others are similar. Assume that $h_{S_i,GM}^L\neq 0$. There is a triangle 
		\begin{equation}\label{tri-DSigma^2}
			GM\longrightarrow L\longrightarrow S_i\longrightarrow \Sigma GM
		\end{equation}
		in $\mathcal{D}/\Sigma^2$. Note that $\Hom_{\mathcal{D}/\Sigma^2}(L,S_i)=\Hom_\mathcal{D}(L, S_{i})$. It follows that the triangle \eqref{tri-DSigma^2} is the image of a triangle in $\mathcal{D}$ under the canonical projection $\pi:\mathcal{D}\rightarrow \mathcal{D}/\Sigma^2$. Therefore, $\hat{L}=\hat{GM}+\hat{S}_i=-\hat{\theta M}+\hat{S}_i$ in $\go(\mathcal{D})=\go(Q)$, which contradicts to the condition that $L,M$ are indecomposable representations.
	\end{proof}
	
	
	
	\begin{lemma} \label{lem:equ-Hall-num}
		Let $L$ and $M$ be indecomposable representations of $(Q,\mathbf{d})$. For any vertex $i\in Q_0$, the following equalities of Hall numbers hold in $\mathbb{Z}/(q-1)\mathbb{Z}$:
		\begin{align} 
			g_{{M},{S_i}}
			^{L}
			&\equiv h_{M,{S_i}}^L+h_{M,G{S_i}}^L+h_{GM,{S_i}}^L
			+h_{GM,G{S_i}}^L \ \  \pmod{q-1} \label{eq:hall-number-1},  \\
			g_{S_i,M}
			^{{L}} &\equiv h_{{S_i},M}^L+h_{{S_i},GM}^L+h_{G{S_i},M}^L+h_{G{S_i},GM}^L\ \  \pmod{q-1}.\label{eq:hall-number-2}
		\end{align}
		
	\end{lemma}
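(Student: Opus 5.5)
By Lemma \ref{lem:hall-number-root-cat-zero}, the last three terms on each right-hand side of \eqref{eq:hall-number-1} and \eqref{eq:hall-number-2} that involve $G$ together with $S_i$ or $M$ largely vanish. So the plan is to compute $g^L_{M,S_i}$ directly as an orbit count and compare it with the surviving root-category Hall numbers. For \eqref{eq:hall-number-1}, Lemma \ref{lem:hall-number-root-cat-zero} gives $h^L_{GM,S_i}=h^L_{GM,GS_i}=0$, so it remains to show
\[
g^L_{M,S_i}\equiv h^L_{M,S_i}+h^L_{M,GS_i}\pmod{q-1}.
\]
Similarly, \eqref{eq:hall-number-2} reduces to $g^L_{S_i,M}\equiv h^L_{S_i,M}+h^L_{GS_i,M}$.

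I will use Proposition \ref{pro:Hall number} to write $g^L_{M,S_i}=|\Hom_{\mathcal{D}/G}(L,M)^*_{\Sigma S_i}|$, the $\Aut_{\mathcal{D}/G}(M)$-orbits of morphisms $L\to M$ with cone $\Sigma S_i$. Since $L,M\in\rep(Q)$, hereditarity gives
\[
\Hom_{\mathcal{D}/G}(L,M)=\Hom_{\mathcal{D}}(L,M)\oplus\Hom_{\mathcal{D}}(L,\Sigma\theta M).
\]
A morphism $l=(l_0,l_1)$ has a cone isomorphic to $\Sigma S_i$ in $\mathcal{D}/G$. By Lemma \ref{lem:basis-go-M}, this forces $\tilde h_L-\tilde h_M=-\tilde h_{S_i}$, and $h_{S_i}$ is nonzero and primitive. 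By the description of indecomposables in $\mathcal{D}/G$, the cone is $\Sigma S_i$ or $S_{\bar i}\cong\Sigma S_i$ (the same object). Comparing dimension vectors via $\phi$ of Lemma \ref{lem:basis-go-M}, either $\hat L=\hat M-\hat S_i$ or $\hat L=\hat M+\widehat{\theta S_i}$-type relations hold, mirroring the two surviving root-category terms $h^L_{M,S_i}$ (an exact sequence $0\to L\to M\to S_i\to0$ up to rotation) and $h^L_{M,GS_i}$.

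The core step is to split the set of such $l$ according to the two dimension-vector possibilities. I expect that in each case the cone condition forces one component to be generic (a mono or epi with simple cokernel or kernel) and leaves the other component free in a subspace $V$. The orbit set will then fiber over the corresponding orbit set counted by $h^L_{M,S_i}$ (respectively $h^L_{M,GS_i}$), with fibers that are affine spaces or torsors. To handle the group, I will note that $\Aut_{\mathcal{D}/G}(M)=\Aut_{\mathcal D}(M)\ltimes(1+\Hom_{\mathcal D}(M,\Sigma\theta M))$, where the unipotent part has $q$-power order. This will show that each fiber contributes $q^{a}\equiv1\pmod{q-1}$.

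The main obstacle is this fiber count: verifying that adding the second component does not change the cone and that the unipotent group acts compatibly, so that the orbit counts agree modulo $q-1$. As a fallback, I will count $|W(S_i,M;L)|$-type sets directly, using the formula $F=|\Hom(L,M)_{\Sigma S_i}|/|\Aut(M)|$ adjusted by stabilizers and comparing cardinalities, which differ by powers of $q$. Equation \eqref{eq:hall-number-2} follows symmetrically using $\Hom(S_i,L)_{M}^*$, or by applying the involution $\theta$ together with duality.
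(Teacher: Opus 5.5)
Your reduction via Lemma~\ref{lem:hall-number-root-cat-zero} to $g^L_{M,S_i}\equiv h^L_{M,S_i}+h^L_{M,GS_i}$ is correct. Fibering the $\Aut_{\mathcal{D}/G}(M)$-orbits over the $\Aut_{\mathcal{D}}(M)$-orbits is also a workable strategy. But the step that carries all the content is only announced (``I expect''), and the mechanism you offer for it does not work.

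You must determine exactly which $l=(l_0,l_1)\in\Hom_{\mathcal{D}}(L,M)\oplus\Hom_{\mathcal{D}}(L,\Sigma\theta M)$ have cone $\Sigma S_i$ in $\mathcal{D}/G$, and show that $l_1$ does not affect the cone. The Grothendieck group cannot decide this. For a module $X$ one has $\phi(\tilde h_X)=\hat X-\widehat{\theta X}$. So the relation $\tilde h_L=\tilde h_M+\tilde h_{S_i}$ only says that $\hat L-\hat M-\hat S_i$ is $\theta$-invariant. (The triangle is $S_i\to L\to M\to\Sigma S_i$; your sign, and the signs of your two candidate relations, are reversed.) That neither singles out $\hat L=\hat M+\hat S_i$ or $\hat L=\hat M-\hat S_{\bar i}$, nor says anything about $l_0$. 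This is precisely where the paper does its work: it computes $\pi_\theta$ of the cone with explicit $\theta$-complexes and shows that, for $f\neq0$, $H^0$ of the cone of $f+g$ depends only on $f$.

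In your setting the gap can be closed as follows.

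First, apply the cohomological functor $H^0\circ\pi_\theta$ to $L\xrightarrow{l}M\to C\to\Sigma L$. This functor sends a module $U$ to $U$, sends $\Sigma U$ to $\theta U$, and sends $(l_0,l_1)$ to $l_0$. It gives an exact sequence $0\to\operatorname{coker}l_0\to H^0(\pi_\theta C)\to\ker\theta l_0\to0$. Since $H^0(\pi_\theta\Sigma S_i)\cong S_{\bar i}$, $l_0$ must be an epimorphism with kernel $S_i$ or a monomorphism with cokernel $S_{\bar i}$. These are exactly the morphisms counted by $h^L_{M,S_i}$, resp.\ $h^L_{M,GS_i}$.

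Conversely, heredity shows that $l_1$ can be absorbed. In the epi case $(\Sigma\theta l_0)_*\colon\Ext^1(L,\theta L)\to\Ext^1(L,\theta M)$ is onto, so $l\circ(1,a_1)=(l_0,0)$ for a suitable $a_1$. In the mono case $l_0^*\colon\Ext^1(M,\theta M)\to\Ext^1(L,\theta M)$ is onto, so $(1,b_1)\circ l=(l_0,0)$ for a suitable $b_1$. Either way $l$ has the cone of $l_0$.

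Finally, the stabiliser of $l_0$ in $\Aut_{\mathcal{D}/G}(M)$ acts on the fibre $\{l_0\}\times\Ext^1(L,\theta M)$. In the mono case it acts transitively. In the epi case $\Aut_{\mathcal{D}}(M)$ acts freely, and the stabiliser acts by translations through $\operatorname{im}l_0^*$. So each fibre has $q^a\equiv1 \pmod{q-1}$ orbits, which gives the congruence.

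The paper avoids this non-free orbit count by putting $S_i$ at a fixed end: it uses $\Hom_{\mathcal{D}/G}(L,S_i)_{\Sigma M}$. There $\End(S_i)$ is the same division algebra in both categories, so $\Aut(S_i)$ acts freely. This yields exact identities: $g^L_{S_i,M}=h^L_{S_i,M}\,q^{\dim\Hom_{\mathcal{D}}(L,GS_i)}$ or $g^L_{S_i,M}=h^L_{GS_i,M}$.

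Also, for \eqref{eq:hall-number-2} the relevant set is $\Hom(L,S_i)_{\Sigma M}$ or $\Hom(M,L)_{S_i}$. The set $\Hom(S_i,L)_M$ computes $g^L_{M,S_i}$ instead.
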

	\begin{proof}
		We give the proof of \eqref{eq:hall-number-2}, while the proof of \eqref{eq:hall-number-1} is similar. According to Lemma \ref{lem:hall-number-root-cat-zero}, it remains to show $g_{S_i,M}^L\equiv h_{S_i,M}^L+h_{GS_i,M}^L$.
		Recall that \[\Hom_{\mathcal{D}/G}(L,S_i)=\Hom_{\mathcal{D}}(L,S_i)\oplus \Hom_{\mathcal{D}}(L,GS_i).\] It follows that every morphism in $\Hom_{\mathcal{D}/G}(L,S_i)$ can be uniquely expressed as $f+g$, where $f\in \Hom_{\mathcal{D}}(L,S_i)$ and $g\in \Hom_{\mathcal{D}}(L,GS_i)$. Then there is a triangle in $\mathcal{D}/G$:
		\begin{equation}\label{tri:mapping-cone}
			M\longrightarrow L\xrightarrow{f+g} S_i\longrightarrow \Sigma M. 
		\end{equation}
		Recall that $\rep(Q)\simeq \mod H$ for some hereditary algebra $H$, and applying the functor $\pi_\theta$ defined in Lemma~\ref{lem:pi-theta-fully-fathful} to the triangle \eqref{tri:mapping-cone}, we obtain a triangle in $\mathcal{D}_\theta(H)$
		\begin{equation}\label{tri-DA-fg}
			\pi_\theta M\longrightarrow \pi_\theta L\xrightarrow{\pi_\theta(f+g)} \pi_\theta S_i\longrightarrow \pi_\theta \Sigma M.
		\end{equation}
		Let $P_2\stackrel{\sigma}{\rightarrowtail} P_1\twoheadrightarrow L$ be a minimial projective resolution of $L$. Then $\pi_\theta L$ can be expressed as the following $\theta$-complex in $\rep(Q,\mathbf{d})$:
		\[
		\cdots\rightarrow \theta P_1\oplus P_2\xrightarrow{d^{-1}=\begin{pmatrix}0&\sigma \\ 0&0\end{pmatrix}} P_1\oplus \theta P_2\xrightarrow{d^0=\begin{pmatrix}0&\theta \sigma \\ 0&0\end{pmatrix}}\theta P_1\oplus P_2\rightarrow \cdots.
		\]
		Similarly, $\pi_\theta S_i$ is isomorphic to the following $\theta$-complex:
		\[
		\cdots\rightarrow S_{\bar{i}}\xrightarrow{\partial^{-1}=0}S_i\xrightarrow{\partial^0=0} S_{\bar{i}}\rightarrow \cdots.
		\]
		As a consequence, the morphisms among the last three terms of the triangle \eqref{tri-DA-fg} can be expressed as chain maps:
		\[
		\xymatrix{ 
			\cdots &
			{\theta P_1\oplus P_2}
			\ar[d]_{  \begin{pmatrix}     
					\theta f_1 & f_2 
			\end{pmatrix}} 
			\ar[rr]^{
				\begin{pmatrix}     
					0 & \sigma \\
					0 &0
			\end{pmatrix}}  
			& & {P_1\oplus\theta P_2}
			\ar[d]^{
				\begin{pmatrix}     
					f_1 & \theta f_2 
			\end{pmatrix}}\ar[rr]^{\begin{pmatrix}     
					0 & \theta\sigma \\
					0 &0
			\end{pmatrix}}& &\theta P_1\oplus P_2 \ar[d]^{\begin{pmatrix}     
					\theta f_1 &  f_2 
			\end{pmatrix}} &\cdots \\
			\cdots &
			{S_{\bar i}}\ar[d]\ar[rr]^0 &&  {S_i}\ar[d]\ar[rr]^0& &S_{\bar{i}}\ar[d] &\cdots \\
			\cdots &
			{P_1\oplus \theta P_2\oplus S_{\bar i}} 
			\ar[rr]^{
				\psi^{-1}
			} 
			& & \theta P_1\oplus P_2\oplus S_i\ar[rr]^{\psi^0} &&{P_1\oplus \theta P_2\oplus S_{\bar i}}& \cdots,
		}
		\]
		where 
		$\psi^{-1}=\begin{pmatrix}         0 & -\theta \sigma & 0  \\         0 & 0 & 0\\          f_1 & \theta f_2 & 0     \end{pmatrix}$ and $\psi^0=\begin{pmatrix}         0 & -\sigma & 0  \\         0 & 0 & 0\\          \theta f_1 &  f_2 & 0     \end{pmatrix}$.
		
		Now assume that $f\neq 0$, equivalently, $f_1\neq 0$. Consequently, $f_1$ is surjective, since $S_i$ is a simple representation. A direct computation shows that $\operatorname{im} \psi^{-1}=\operatorname{im} \theta \sigma \oplus S_i$ and $\ker  \psi^0=\ker \theta f_1\oplus S_i$ by noticing that $\sigma$ is injective. Hence $H^0(\pi_\theta \Sigma M)\cong \ker\theta f_1/\operatorname{im} \theta\sigma$. On the other hand, by~\eqref{eq:pi-theta-object}, we have
		$H^0(\pi_\theta \Sigma M)\simeq H^1(\pi_\theta M)\simeq \theta M$. In particular, the mapping cone of $\pi_\theta(f+g)$ is uniquely determined by $f$. In other words, for any $h\in \Hom_\mathcal{D}(L,GS_i)$, the mapping cone of $f+h$ is also $\Sigma M$.

		If there is a nonzero morphism $f\in\Hom_{\mathcal{D}}(L,S_i)$ such that $f^\ast\in \Hom_{\mathcal{D}/G}(L,S_i)^\ast_{\Sigma M}$, then there exists a triangle
		\[
		M\longrightarrow L\longrightarrow S_i\longrightarrow \Sigma M
		\]
		in $\mathcal{D}$. It follows that $\hat{L}=\hat{M}+\hat{S}_i\in \go(\mathcal{D})$. Hence $\hat{L}\neq \hat{M}+\hat{GS_i}=\hat{M}-\hat{S}_{\bar{i}}$. Consequently, the mapping cone of any morphism $g\in \Hom_{D}(L, GS_i)$ is not isomorphic to $\Sigma M$. Putting all of these together, we conclude that
		\[
		\Hom_{\mathcal{D}/G}(L,S_i)_{\Sigma M}=\Hom_{\mathcal{D}}(L,S_i)_{\Sigma M}\times \Hom_{\mathcal{D}}(L,GS_i).
		\]
		On the other hand, $\Hom_{\mathcal{D}/\Sigma^2}(L,S_i)=\Hom_{\mathcal{D}}(L,S_i)$ and $\Hom_{\mathcal{D}/\Sigma^2}(L,GS_i)=\Hom_{\mathcal{D}}(L,GS_i)$. It follows that 
		\[\Hom_{\mathcal{D}/\Sigma^2}(L,S_i)_{\Sigma M}=\Hom_{\mathcal{D}}(L,S_i)_{\Sigma M}\ \text{and}\ h_{GS_i,M}^L=0.
		\]
		Note that $\End_{\mathcal{D}/G}(S_i)=\End_{\mathcal{D}/\Sigma^2}(S_i)=\End_{\mathcal{D}}(S_i)$ is a division algebra. It follows that the action of $\Aut_{\mathcal{D}/G}(S_i)=\Aut_{\mathcal{D}/\Sigma^2}(S_i)$ on $\Hom_{\mathcal{D}/G}(L,S_i)_{\Sigma M}$ and $\Hom_{\mathcal{D}/\Sigma^2}(L,S_i)_{\Sigma M}$ are free. Hence
		\begin{eqnarray*}
			g_{S_i,M}^L&=&\frac{|\Hom_{\mathcal{D}/G}(L,S_i)_{\Sigma M}|}{|\Aut_{\mathcal{D}/G}(S_i)|}=\frac{|\Hom_{\mathcal{D}}(L,S_i)_{\Sigma M}|\cdot |\Hom_{\mathcal{D}}(L,GS_i)|}{|\Aut_{\mathcal{D}}(S_i)|}\\
			&=&\frac{|\Hom_{\mathcal{D}/\Sigma^2}(L,S_i)_{\Sigma M}|}{\Aut_{\mathcal{D}/\Sigma^2}(S_1)}\times q^{\dim \Hom_{\mathcal{D}}(L,GS_i)}\\
			&=&h_{S_i,M}^L\times q^{\dim \Hom_{\mathcal{D}}(L,GS_i)}.
		\end{eqnarray*}
		
		Now assume that there does not exist a morphism $f\in \Hom_{\mathcal{D}}(L,S_i)$ such that $f^\ast\in \Hom_{\mathcal{D}/G}(L,S_i)^\ast_{\Sigma M}$. It follows that $h_{S_i,M}^L=0$ and
		\begin{eqnarray*}
			g_{S_i,M}^L&=&\frac{|\Hom_{\mathcal{D}}(L,GS_i)_{\Sigma M}|}{|\Aut_{\mathcal{D}}(GS_i)|}\\
			&=&\frac{|\Hom_{\mathcal{D}/\Sigma^2}(L,GS_i)_{\Sigma M}|}{|\Aut_{\mathcal{D}}(GS_i)|}\\
			&=&h_{GS_i,M}^L.
		\end{eqnarray*}
		The proof is completed.
	\end{proof}

	Let $\mathfrak{c}(\mathcal{D}/G)_{(q-1)}$ be the Lie subaglebra of $\mathfrak{g}(\mathcal{D}/G)_{(g-1)}$ generated by \[\{\tilde{u}_{S_i},\frac{\tilde{h}_{S_i}}{\tilde{d}_{S_i}}\mid i=1,\ldots, n,\bar{1},\ldots, \bar{n}\},\]
	and $\mathfrak{c}(\mathcal{D}/\Sigma^2)_{(q-1)}$ be the Lie subaglebra of $\mathfrak{g}(\mathcal{D}/\Sigma^2)_{(g-1)}$ generated by \[\{{u}_{S_i}, u_{\Sigma S_i},\frac{{h}_{S_i}}{{d}_{S_i}}\mid i=1,\ldots, n,\bar{1},\ldots, \bar{n}\}.\]
	According to Lemma \ref{lem:homom-cartan-subalgebra}, the following is a well-defined $\mathbb{Z}$-linear map:
	\begin{align*}
		\varphi: & \mathfrak{g}(\mathcal{D}/G)_{(q-1)}  \longrightarrow \mathfrak{g}(\mathcal{D}/\Sigma^2)_{(q-1)}. \\
		&\tilde{h}_{X} \longmapsto h_X+h_{GX} \quad (X\in \mathcal{D}/G),\\
		&\tilde{u}_{Y} \longmapsto u_Y+u_{GY} \quad (Y\in \operatorname{ind}\mathcal{D}/G).
	\end{align*}

	\begin{lemma}\label{lem:property-varphi}
		The map $\varphi$ satisfies the following properties:
		\begin{enumerate}
			\item $\varphi([\tilde{h}_X,\tilde{h}_Y])=[\varphi(\tilde{h}_X),\varphi(\tilde{h}_Y)]$ for any $X,Y\in \mathcal{D}/G$;
			\item $\varphi([\tilde{h}_X,\tilde{u}_Y])=[\varphi(\tilde{h}_X),\varphi(\tilde{u}_Y)]$ for any $X\in \mathcal{D}/G$ and $Y\in \operatorname{ind} \mathcal{D}/G$;
			\item $\varphi([\tilde{u}_X,\tilde{u}_{S_i}])=[\varphi(\tilde{u}_X),\varphi(\tilde{u}_{S_i})]$ for any $X\in \operatorname{ind} \mathcal{D}/G$ and $i\in Q_0$.
		\end{enumerate}
	\end{lemma}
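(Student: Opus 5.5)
The plan is to treat the three identities separately. (1) and (2) follow formally from Lemma~\ref{lem:homom-cartan-subalgebra} and a comparison of the two symmetric Euler forms. (3) follows from Lemma~\ref{lem:equ-Hall-num} together with a bookkeeping of which indecomposables of $\mathcal{D}/\Sigma^2$ can occur.

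For (1), both sides vanish, since $[\tilde{\mathfrak h},\tilde{\mathfrak h}]=0$ and $[\mathfrak h,\mathfrak h]=0$.

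For (2), both sides are additive in $X$, so I may assume $X$ is indecomposable. Since every indecomposable of $\mathcal{D}/G$ is isomorphic to a representation, I may also assume $X,Y\in\rep(Q)$. The left side is $(\tilde h_X,\tilde h_Y)_{\mathcal{D}/G}(u_Y+u_{GY})$. The right side is
\[
(h_X+h_{GX},h_Y)_{\mathcal{D}/\Sigma^2}\,u_Y+(h_X+h_{GX},h_{GY})_{\mathcal{D}/\Sigma^2}\,u_{GY}.
\]
Via Lemma~\ref{lem:iso-groth-root-cat} I identify the form on $\go(\mathcal{D}/\Sigma^2)$ with the symmetric Euler--Ringel form $(-,-)$ on $\go(Q)$, and I use $h_{GZ}=-\hat{\theta Z}$.
\begin{itemize}
\item The first coefficient becomes $(\hat X,\hat Y)-(\hat{\theta X},\hat Y)$.
\item The second becomes $-(\hat X,\hat{\theta Y})+(\hat{\theta X},\hat{\theta Y})$. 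By $\theta$-invariance of $(-,-)$, this equals the first.
\item For the $\mathcal{D}/G$ side, expand $\Hom_{\mathcal{D}/G}(X,\Sigma^kY)=\bigoplus_p\Hom_{\mathcal{D}}(X,\Sigma^{k+p}\theta^pY)$. Only finitely many summands are nonzero because $\rep(Q)$ is hereditary, exactly as in the proof of Lemma~\ref{lem:Euler-form-D/G}. Summing with signs gives $\langle\hat X,\hat Y\rangle-\langle\hat X,\hat{\theta Y}\rangle$ for the non-symmetric part.
\item Symmetrizing then yields $(\tilde h_X,\tilde h_Y)_{\mathcal{D}/G}=(\hat X,\hat Y)-(\hat X,\hat{\theta Y})$, which is the common value of both coefficients.
\end{itemize}
This proves (2).

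For (3), I may take $X\in\rep(Q)$ indecomposable. The left side is
\[
\sum_{L}\tilde\gamma^{L}_{X,S_i}\,(u_L+u_{GL})-\delta_{X,\Sigma S_i}\,\frac{h_X+h_{GX}}{\tilde d_X},
\]
with $L$ running over indecomposable representations. The right side expands into the four brackets $[u_X,u_{S_i}]$, $[u_X,u_{GS_i}]$, $[u_{GX},u_{S_i}]$ and $[u_{GX},u_{GS_i}]$.
\begin{itemize}
\item Every indecomposable of $\mathcal{D}/\Sigma^2$ is either a representation $L$ or of the form $GL$ with $L$ a representation.
\item The coefficient of $u_L$ on the right is $\gamma^L_{X,S_i}+\gamma^L_{X,GS_i}+\gamma^L_{GX,S_i}+\gamma^L_{GX,GS_i}$. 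Subtracting \eqref{eq:hall-number-1} from \eqref{eq:hall-number-2} (with $M=X$) shows this equals $\tilde\gamma^L_{X,S_i}$ modulo $q-1$.
\item For the coefficient of $u_{GL}$, I use that $G$ induces a triangle autoequivalence of $\mathcal{D}/\Sigma^2$ with $G^2\cong\id$ there. Hence $h^{GL}_{GA,GB}=h^L_{A,B}$, and the $u_{GL}$-coefficient is the same sum with $X,S_i$ replaced by $GX,GS_i$, which again equals $\tilde\gamma^L_{X,S_i}$.
\item The Cartan terms arise only when $X\cong\Sigma S_i\cong\theta S_i=S_{\bar i}$ in $\mathcal{D}/G$. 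In that case the right side receives $-h_{S_{\bar i}}/d_{S_{\bar i}}$ from $[u_{S_{\bar i}},u_{GS_i}]$ (as $GS_i=\Sigma S_{\bar i}$) and $-h_{GS_{\bar i}}/d$ from $[u_{GS_{\bar i}},u_{S_i}]$. By Lemma~\ref{lem:tilde-d=d} their sum is $\varphi(-\tilde h_{S_{\bar i}}/\tilde d_{S_{\bar i}})$, and no other bracket produces an $\mathfrak h$-term.
\end{itemize}

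The main obstacle I expect in (3) is the bookkeeping: checking that the four brackets produce no $u_L$ with $L$ outside $\{L',GL'\}$ and no spurious Cartan terms. Lemma~\ref{lem:hall-number-root-cat-zero} is what kills the cross terms, and I would apply it first.
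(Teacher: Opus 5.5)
Your proposal is correct and follows essentially the same route as the paper. Part (1) is trivial. Part (2) reduces to the identity $(h_X+h_{GX},h_Y)_{\mathcal{D}/\Sigma^2}=(\tilde h_X,\tilde h_Y)_{\mathcal{D}/G}=(h_X+h_{GX},h_{GY})_{\mathcal{D}/\Sigma^2}$, which you verify for arbitrary representations rather than reducing to simples. Part (3) matches the $u_L$- and $u_{GL}$-coefficients via Lemma~\ref{lem:equ-Hall-num} and the $G$-equivariance of Hall numbers, with Cartan terms appearing exactly when $X\cong S_{\bar i}$. The only difference is minor: you fold the $u$-terms of the case $X\cong S_{\bar i}$ into the general argument, which is legitimate because Lemma~\ref{lem:equ-Hall-num} covers $M=S_{\bar i}$, where all the relevant Hall numbers vanish; the paper instead handles that case by a separate direct computation.
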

	\begin{proof}
		The statement $(1)$ is evident by definition. For the statement $(2)$, by the definitions of Lie brackets and $\varphi$, we have 
		\begin{align*}
			\varphi([\tilde{h}_{X} ,\tilde{u}_{Y}])&=\varphi((\tilde{h}_{{X}},\tilde{h}_{{Y}})_{\mathcal{D}/G} \cdot \tilde{u}_{{Y}})= (\tilde{h}_{{X}},\tilde{h}_{{Y}})_{\mathcal{D}/G} \cdot (u_Y+u_{GY}), \\
			[\varphi(\tilde{h}_{{X}}),\varphi(\tilde{u}_{{Y}})]&=[h_X+h_{GX},u_Y+u_{GY}]\\
			&=[(h_X,h_Y)_{\mathcal{D}/\Sigma^2}+(h_{GX},h_Y)_{\mathcal{D}/\Sigma^2}]\cdot u_Y 
			+[(h_X,h_{GY})_{\mathcal{D}/\Sigma^2}\\
			&\quad+(h_{GX},h_{GY})_{\mathcal{D}/\Sigma^2}]\cdot u_{GY}.
		\end{align*}
		It remains to show that 
		\[
		(h_X,h_Y)_{\mathcal{D}/\Sigma^2}+(h_{GX},h_Y)_{\mathcal{D}/\Sigma^2} 
		=(\tilde{h}_{{X}},\tilde{h}_{{Y}})_{\mathcal{D}/G}
		=
		(h_X,h_{GY})_{\mathcal{D}/\Sigma^2}+(h_{GX},h_{GY})_{\mathcal{D}/\Sigma^2}.
		\]
		By linearity, it suffices to assume that $X=S_i$ and $Y=S_j$ for $1\leq i\leq j\leq n$, which can be verified by a direct computation.
		
		Let us prove the statement $(3)$. Without loss of generality, we may assume that $X$ is an indecomposable representation of $(Q,\mathbf{d})$. Note that $X\cong \Sigma S_i$ in $\mathcal{D}/G$ if and only if $X\cong S_{\bar{i}}$ in $\rep(Q)$.
		
		Let us first assume that $X\not\cong \Sigma S_i$ in $\mathcal{D}/G$.
		On the one hand, we have
		\begin{align*}
			\varphi([ \tilde{u}_{{X}}, \tilde{u}_{{S_i}}])=& \varphi(\sum_{{L}\in \operatorname{ind}\mathcal{D}/G}  \tilde{\gamma}_{{X}, {S_i}}^{{L}} \tilde{u}_{{L}})
			= \sum_{L\in\operatorname{ind}(Q)}\tilde{\gamma}_{{X},{S_i}}^{{L}}(u_L+u_{GL}).
		\end{align*}
		On the other hand, the condition $X\not\cong \Sigma S_i$ in $\mathcal{D}/G$ implies that $X\not\cong \Sigma S_i,X\not\cong\Sigma GS_i$, $GX\not\cong\Sigma S_i$ and $GX\not\cong \Sigma GS_i$ in $\mathcal{D}/\Sigma^2$. It follows that
		\begin{align*}
			[\varphi( \tilde{u}_{{X}}),\varphi(\tilde{u}_{{S_i}})]
			=&
			[u_X+u_{GX},u_{S_i}+u_{G{S_i}}]\\
			=&\sum_{L\in\operatorname{ind}\mathcal{D}/\Sigma^2}(\gamma_{X,{S_i}}^L+\gamma_{X,G{S_i}}^L+\gamma_{GX,{S_i}}^L+\gamma_{GX,G{S_i}}^L)u_L \\
			=& \sum_{L\in\operatorname{ind}(Q)}(\gamma_{X,{S_i}}^L+\gamma_{X,G{S_i}}^L+\gamma_{GX,{S_i}}^L+\gamma_{GX,G{S_i}}^L)u_L 
			\\
			&+\sum_{L\in\operatorname{ind}(Q)}(\gamma_{X,{S_i}}^{GL}+\gamma_{X,G{S_i}}^{GL}+\gamma_{GX,{S_i}}^{GL}+\gamma_{GX,G{S_i}}^{GL})u_{GL}
			\\
			=&\sum_{L\in\operatorname{ind}(Q)}(\gamma_{X,{S_i}}^L+\gamma_{X,G{S_i}}^L+\gamma_{GX,{S_i}}^L+\gamma_{GX,G{S_i}}^L)(u_L+u_{GL}). 
		\end{align*}
		We conclude that  $\varphi([ \tilde{u}_{{X}}, \tilde{u}_{{S_i}}])=[\varphi( \tilde{u}_{{X}}),\varphi(\tilde{u}_{{S_i}})]$ by Lemma \ref{lem:equ-Hall-num}.
		
		Now assume that $X\cong \Sigma S_i$ in $\mathcal{D}/G$, hence we may assume that $X= S_{\bar{i}}$.
		Note that 
		\[\Hom_{\mathcal{D}/G}(S_i,\Sigma S_{\bar{i}})=\Hom_{\mathcal{D}}(S_i,\Sigma S_{\bar{i}})\oplus \Hom_{\mathcal{D}}(S_i,S_i)=\Hom_{\mathcal{D}}(S_i,S_i),\]
		and
		\[\Hom_{\mathcal{D}/G}(S_{\bar{i}},\Sigma S_{i})=\Hom_{\mathcal{D}}(S_{\bar{i}},\Sigma S_i)\oplus \Hom_{\mathcal{D}}(S_{\bar{i}},S_{\bar{i}})=\Hom_{\mathcal{D}}(S_{\bar{i}},S_{\bar{i}}).
		\]
		It follows that $\tilde{\gamma}_{S_{\bar{i}},S_i}^L=0$ for any indecomposable object $L$ of $\mathcal{D}/G$. Hence, 
		$[\tilde{u}_{S_{\bar{i}}},\tilde{u}_{S_i}]=-\frac{h_{S_{\bar{i}}}}{d_{S_{\bar{i}}}}$, and 
		\[
		\varphi([\tilde{u}_{S_{\bar{i}}},\tilde{u}_{S_i}])=-\frac{h_{S_{\bar{i}}}-h_{S_i}}{d_{S_i}}.
		\]
		On the other hand,
		\begin{eqnarray*}
			[\varphi( \tilde{u}_{{S_{\bar{i}}}}),\varphi(\tilde{u}_{{S_i}})]&=&[u_{S_{\bar{i}}}+u_{G S_{\bar{i}}},u_{S_i}+u_{G S_i}]\\
			&=&[u_{S_{\bar{i}}}+u_{\Sigma S_i},u_{S_i}+u_{\Sigma S_{\bar{i}}}]\\
			&=&[u_{S_{\bar{i}}},u_{\Sigma S_{\bar{i}}}]+[u_{\Sigma S_i},u_{S_i}]\\
			&=&-\frac{h_{S_{\bar{i}}}}{d_{S_{\bar{i}}}}-\frac{h_{\Sigma S_i}}{d_{\Sigma S_i}}.
		\end{eqnarray*}
		Noticing that $h_{\Sigma S_i}=-h_{S_i}$ and $d_{S_{\bar{i}}}=d_{\Sigma S_i}=d_{S_i}$, we conclude that \[\varphi([\tilde{u}_{S_{\bar{i}}},\tilde{u}_{S_i}])=[\varphi( \tilde{u}_{{S_{\bar{i}}}}),\varphi(\tilde{u}_{{S_i}})].\]
		This completes the proof.
	\end{proof}
	\begin{proposition}\label{pro:homom-Ringel-Hall-algs}
		The restriction of $\varphi$
		on $\mathfrak{c}(\mathcal{D}/G)_{(q-1)} $ induces a Lie algebra homomorphism
		\[
		\bar{\varphi}: \mathfrak{c}(\mathcal{D}/G)_{(q-1)}\longrightarrow \mathfrak{c}(\mathcal{D}/\Sigma^2)_{(q-1)}.
		\]
	\end{proposition}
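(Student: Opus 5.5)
The plan is to reduce everything to Lemma~\ref{lem:property-varphi} by a Jacobi-identity induction on brackets of generators. Afterwards I check that the image lands in $\mathfrak{c}(\mathcal{D}/\Sigma^2)_{(q-1)}$.

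\textbf{Step 1: extend Lemma~\ref{lem:property-varphi} to arbitrary first arguments.} Since $\varphi$ is $\mathbb{Z}$-linear and the bracket is bilinear, part (3) of Lemma~\ref{lem:property-varphi} extends linearly in the first variable. Part (2) covers the case $z=\tilde h_X$ (after antisymmetry, $[\tilde h_X,\tilde u_{S_i}]=-[\tilde u_{S_i},\tilde h_X]$). Together these give
\[
\varphi([z,\tilde u_{S_i}])=[\varphi(z),\varphi(\tilde u_{S_i})]
\]
for every $z\in\mathfrak{g}(\mathcal{D}/G)_{(q-1)}$ and every $i\in Q_0$. Similarly, parts (1), (2) and antisymmetry give
\[
\varphi([z,\tilde h])=[\varphi(z),\varphi(\tilde h)]
\]
for all $z$ and all $\tilde h\in\tilde{\mathfrak h}$. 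The latter applies in particular to $\tilde h=\tilde h_{S_i}/\tilde d_{S_i}$; here one uses $\tilde d_{S_i}=d_{S_i}$ (Lemma~\ref{lem:tilde-d=d}) and that $\psi$ is defined on $\tilde{\mathfrak h}$ (Lemma~\ref{lem:homom-cartan-subalgebra}). Hence $\varphi$ intertwines $\operatorname{ad}(y)$ with $\operatorname{ad}(\varphi(y))$ for every generator $y$ of $\mathfrak{c}(\mathcal{D}/G)_{(q-1)}$, acting on the whole of $\mathfrak g(\mathcal{D}/G)_{(q-1)}$.

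\textbf{Step 2: the Jacobi induction.} Let
\[
K=\{y\in \mathfrak{c}(\mathcal{D}/G)_{(q-1)} \mid \varphi([z,y])=[\varphi(z),\varphi(y)] \text{ for all } z\in\mathfrak{c}(\mathcal{D}/G)_{(q-1)}\}.
\]
By Step 1, $K$ contains all generators, and $K$ is clearly a $\mathbb{Z}$-submodule. I claim $K$ is closed under the bracket. Let $y_1,y_2\in K$ and $z\in\mathfrak c(\mathcal{D}/G)_{(q-1)}$. Since $[z,y_1]$ and $[z,y_2]$ lie again in $\mathfrak c(\mathcal{D}/G)_{(q-1)}$, Jacobi gives
\[
\varphi([z,[y_1,y_2]])=\varphi([[z,y_1],y_2])-\varphi([[z,y_2],y_1])=[[\varphi z,\varphi y_1],\varphi y_2]-[[\varphi z,\varphi y_2],\varphi y_1].
\]
By Jacobi in $\mathfrak g(\mathcal{D}/\Sigma^2)_{(q-1)}$, the right-hand side equals $[\varphi z,[\varphi y_1,\varphi y_2]]$. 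Taking $z=y_1$ in the defining property of $y_2\in K$ gives $[\varphi y_1,\varphi y_2]=\varphi([y_1,y_2])$, so $[y_1,y_2]\in K$. Since $\mathfrak{c}(\mathcal{D}/G)_{(q-1)}$ is the smallest Lie subring containing the generators, $K=\mathfrak{c}(\mathcal{D}/G)_{(q-1)}$. Thus $\varphi$ restricted to $\mathfrak{c}(\mathcal{D}/G)_{(q-1)}$ is a Lie homomorphism.

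\textbf{Step 3: the image.} The images of the generators are
\[
\varphi(\tilde u_{S_i})=u_{S_i}+u_{\Sigma S_{\bar i}},\qquad \varphi\Bigl(\frac{\tilde h_{S_i}}{\tilde d_{S_i}}\Bigr)=\frac{h_{S_i}-h_{S_{\bar i}}}{d_{S_i}},
\]
using $GS_i=\Sigma S_{\bar i}$ and $h_{GX}=-h_{\theta X}$. These lie in $\mathfrak c(\mathcal{D}/\Sigma^2)_{(q-1)}$. Because $\varphi$ is a homomorphism on $\mathfrak{c}(\mathcal{D}/G)_{(q-1)}$, it maps the Lie subring generated by these elements into $\mathfrak c(\mathcal{D}/\Sigma^2)_{(q-1)}$, giving $\bar\varphi$.

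\textbf{Main obstacle.} The real content is Step 1, i.e.\ Lemma~\ref{lem:property-varphi}(3), which already contains the delicate Hall-number comparison of Lemma~\ref{lem:equ-Hall-num} and the special case $X=S_{\bar i}$. What remains to watch is that Step 1 is stated for arbitrary $z$ in the ambient algebra, not only in $\mathfrak c$. This matters because the bracket of $z$ with a generator must be controlled before one knows where it lies.
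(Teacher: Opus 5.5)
Your proof is correct and follows the paper's argument: both use Lemma~\ref{lem:property-varphi} to control brackets with the generators, and then the Jacobi identity to extend the homomorphism property to all of $\mathfrak{c}(\mathcal{D}/G)_{(q-1)}$. Your set $K$ makes explicit the induction the paper leaves implicit, and your order (homomorphism property first, image containment afterwards) is, if anything, cleaner than the paper's.
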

	\begin{proof}
		By Lemma \ref{lem:property-varphi} and the Jacobi identity, we deduce $\operatorname{im} \varphi|_{\mathfrak{c}(\mathcal{D}/G)_{(q-1)}}\subseteq \mathfrak{c}(\mathcal{D}/\Sigma^2)_{(q-1)}$. Hence, the restriction of $\varphi$ yields a linear map $\bar{\varphi}:\mathfrak{c}(\mathcal{D}/G)_{(q-1)}\rightarrow \mathfrak{c}(\mathcal{D}/\Sigma^2)_{(q-1)}$. Again by Lemma \ref{lem:property-varphi}, the linearity of ${\varphi}$ and the Jacobi identity, we conclude that $\bar{\varphi}$ is a homomorphism of Lie algebras.
	\end{proof}
	

	\subsection{GIM algebras via Ringel--Hall Lie algebras}\label{ss:main result GIM}
	We consider the integral Ringel--Hall Lie algebra of $\mathcal{D}/G$ as in Subsection \ref{ss:intergral-R-H-Lie-alg}.
	Set
	\[
	\begin{aligned}
		\Omega = \{ \mathbb{K} \mid \mathbb{F} \subseteq \mathbb{K} \subseteq \ov{\mathbb{F}} \text{ is a finite field extension and conservative } \\
		\text{for all simple representations in } \rep_{\mathbb{F}}(Q,\mathbf{d}) \}.
	\end{aligned}
	\]
	
	For each $\mathbb{K}\in \Omega$, denote by $S_i^{\mathbb{K}}$ the simple representation in $\rep_{\mathbb{K}}(Q,\mathbf{d})$ associated to vertex $i$.
	Let $\mathfrak{g}((\mathcal{D}/G)^\mathbb{K})_{(|\mathbb{K}|-1)}$ be the Ringel-Hall Lie algebra of the triangulated orbit category $(\mathcal{D}/G)^\mathbb{K}:=\mathcal{D}^b(\rep_{\mathbb{K}}(Q,\mathbf{d}))/G$.
	We consider the direct product of the Lie algebras \[\prod_{\mathbb{K}\in \Omega} \mathfrak{g}((\mathcal{D}/G)^\mathbb{K})_{(|\mathbb{K}|-1)},\] which has a natural $\mathbb{Z}$-Lie algebraic structure, and denote $\mathscr{LC}(\mathcal{D}/G)$ as its Lie subalgebra generated by 
	\[
	\tilde{\bf u}_{S_i}:=(\tilde{u}_{{S_i}^\mathbb{K}})_{\mathbb{K}\in\Omega},\quad
	\tilde{\bf u}_{S_{\bar{i}}}:=
	(\tilde{ u}_{ {S_{\bar{i}}}^\mathbb{K}})_{\mathbb{K}\in\Omega},\quad
	\tilde{\bf h}_{S_i}:=(\tilde{h}_{{S_i}^\mathbb{K}})_{\mathbb{K}\in\Omega}
	,i=1,\dots,n.  
	\]
	The following is a direct consequence of Proposition \ref{pro:homom-Ringel-Hall-algs}.
	\begin{corollary}
		There is a Lie algebra homomorphism $ \Phi:\mathscr{LC}(\mathcal{D}/G)\rightarrow \mathscr{LC}(\mathcal{D}/\Sigma^2)$ such that $\Phi(\tilde{\bf u}_{S_i})={\bf u}_{S_i}+{\bf u}_{\Sigma S_{\bar{i}}}$
		and $\Phi(\tilde{\bf h}_{S_i})={\bf h}_{S_i}-{\bf h}_{S_{\bar{i}}}$ for $ i=1,\ldots,n,\bar{1},\ldots,\bar{n}$.
	\end{corollary}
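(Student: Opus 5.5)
The plan is to assemble $\Phi$ componentwise from the homomorphisms $\bar{\varphi}$ of Proposition~\ref{pro:homom-Ringel-Hall-algs}, one for each $\mathbb{K}\in\Omega$. For each $\mathbb{K}\in\Omega$, the valued quiver $(Q,\mathbf{d})$ over $\mathbb{K}$ satisfies all the hypotheses used in Subsections~\ref{ss:2-periodic-(C,D)}--\ref{ss:sur-inj of R-H-Lie}. Conservativity ensures that the simples $S_i^{\mathbb{K}}$ remain simple, with the same valuation data, and $\theta$ still swaps $S_i^{\mathbb{K}}$ and $S_{\bar i}^{\mathbb{K}}$. Hence Proposition~\ref{pro:homom-Ringel-Hall-algs} gives a Lie algebra homomorphism
\[
\bar{\varphi}^{\mathbb{K}}:\mathfrak{c}((\mathcal{D}/G)^{\mathbb{K}})_{(|\mathbb{K}|-1)}\longrightarrow \mathfrak{c}((\mathcal{D}/\Sigma^2)^{\mathbb{K}})_{(|\mathbb{K}|-1)}.
\]
Taking the product over $\Omega$ yields a Lie algebra homomorphism
\[
\prod_{\mathbb{K}}\bar{\varphi}^{\mathbb{K}}:\prod_{\mathbb{K}}\mathfrak{c}((\mathcal{D}/G)^{\mathbb{K}})\to \prod_{\mathbb{K}}\mathfrak{c}((\mathcal{D}/\Sigma^2)^{\mathbb{K}}).
\]
Since $\mathscr{LC}(\mathcal{D}/G)$ is generated by tuples whose components lie in $\mathfrak{c}((\mathcal{D}/G)^{\mathbb{K}})$, it sits inside the source, and we define $\Phi$ as the restriction of this product map.

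Next I will compute the images of the generators componentwise. By definition, $\varphi(\tilde{u}_{S_i})=u_{S_i}+u_{GS_i}$. Also $GS_i=\Sigma\theta S_i=\Sigma S_{\bar i}$, which gives ${\bf u}_{S_i}+{\bf u}_{\Sigma S_{\bar i}}$, and the same holds with $i$ replaced by $\bar i$ using $\bar{\bar i}=i$. For the Cartan part, $\varphi(\tilde{h}_{S_i})=h_{S_i}+h_{GS_i}=h_{S_i}+h_{\Sigma S_{\bar i}}=h_{S_i}-h_{S_{\bar i}}$, since $h_{\Sigma X}=-h_X$ in $\go(\mathcal{D}/\Sigma^2)$. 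These formulas are uniform in $\mathbb{K}$, so they give the stated values of $\Phi$ on $\tilde{\bf u}_{S_i}$ and $\tilde{\bf h}_{S_i}$.

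It remains to check that the image lands in $\mathscr{LC}(\mathcal{D}/\Sigma^2)$. The images of the generators, ${\bf u}_{S_i}+{\bf u}_{\Sigma S_{\bar i}}$ and ${\bf h}_{S_i}-{\bf h}_{S_{\bar i}}$, are $\mathbb{Z}$-combinations of the generators ${\bf u}_{S_j}$, ${\bf u}_{\Sigma S_j}$, ${\bf h}_{S_j}$ of $\mathscr{LC}(\mathcal{D}/\Sigma^2)$. A Lie homomorphism maps the subalgebra generated by a set into the subalgebra generated by the images, so $\Phi(\mathscr{LC}(\mathcal{D}/G))\subseteq\mathscr{LC}(\mathcal{D}/\Sigma^2)$.

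The main, though mild, obstacle is confirming that Proposition~\ref{pro:homom-Ringel-Hall-algs} applies uniformly for every $\mathbb{K}\in\Omega$. Lemma~\ref{lem:equ-Hall-num} involves congruences modulo $|\mathbb{K}|-1$, so we must check that $q^{\dim}\equiv 1$ modulo $|\mathbb{K}|-1$ with $q$ replaced by $|\mathbb{K}|$; this holds because the argument is field-independent. The other point is that $\tilde{d}_{S_i}$ and $d_{S_i}$ do not change under conservative extension.
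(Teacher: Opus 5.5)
Your proposal is correct and follows the paper's route. The paper records the corollary as a direct consequence of Proposition~\ref{pro:homom-Ringel-Hall-algs}, and you supply the details it leaves implicit: applying $\bar{\varphi}$ over each $\mathbb{K}\in\Omega$, taking the product and restricting to $\mathscr{LC}(\mathcal{D}/G)$, using $GS_i=\Sigma S_{\bar i}$ and $h_{\Sigma X}=-h_X$ to identify the images of the generators, and checking that these images lie in $\mathscr{LC}(\mathcal{D}/\Sigma^2)$ (with $\tilde{h}_{S_i}=\tilde{d}_{S_i}\cdot\frac{\tilde{h}_{S_i}}{\tilde{d}_{S_i}}$ tacitly lying in $\mathfrak{c}(\mathcal{D}/G)_{(q-1)}$).
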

	
	Recall that, by Remark \ref{rem:twisted-embedding}, there exists an embedding $\iota':{\rm gim}(C)\rightarrow {\rm gcm}(A(C))$. On the other hand, by Lemmas \ref{lem:iso-groth-root-cat}, \ref{lem:Ringel-Euler-form-(Q,d)} and Theorem \ref{thm:PX-realization}, there is an isomorphism of Lie algebras $\Xi:{\rm gcm}(A(C))\rightarrow \mathscr{LC}(\mathcal{D}/\Sigma^2)\otimes_{\mathbb{Z}} \mathbb{C}$.
	Now we are in a position to state the main result of this paper.
	\begin{theorem}\label{thm:GIM-via-RH-Lie}
		The GIM algebra ${\rm gim}(C)$ is isomorphic to the integral Ringel--Hall Lie algebra $\mathscr{LC}(\mathcal{D}/G)\otimes_{\mathbb{Z}} \mathbb{C}$, where the isomorphism  $\Psi:{\rm gim}(C)\cong\mathscr{LC}(\mathcal{D}/G)\otimes_{\mathbb{Z}} \mathbb{C}$ is given by
		\[
		\tilde{e}_i\mapsto \tilde{\bf u}_{S_i},\quad \tilde{f}_i\mapsto -\tilde{\bf u}_{S_{\bar{i}}}, \quad\tilde{h}_i\mapsto \frac{\tilde{\bf h}_{S_i}}{\tilde{d}_{S_i}},\quad 1\leq i\leq n.
		\]
		Furthermore, $\Psi$ fits into the following commutative diagram
		\[
		\xymatrix{{\rm gim}(C)\ar[rr]^{\iota'}\ar[d]^{\Psi} &&{\rm gcm}(A(C))\ar[d]^{\Xi}\\
			\mathscr{LC}(\mathcal{D}/G)\otimes_{\mathbb{Z}} \mathbb{C}\ar[rr]^{\Phi\otimes \id}&&\mathscr{LC}(\mathcal{D}/\Sigma^2)\otimes_{\mathbb{Z}} \mathbb{C}.
		}
		\]
	\end{theorem}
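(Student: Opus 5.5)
The plan is to build $\Psi$ from the defining relations, prove it is surjective, and obtain injectivity from the commutative square together with the injectivity of $\iota'$ and $\Xi$.

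\emph{Step 1: $\Psi$ is a well-defined homomorphism.} I will check that the elements $\tilde{\bf u}_{S_i}$, $-\tilde{\bf u}_{S_{\bar i}}$, $\tilde{\bf h}_{S_i}/\tilde d_{S_i}$ satisfy the relations \eqref{rel:gim-1}--\eqref{rel:gim-5} componentwise, i.e.\ in each $\mathfrak{g}((\mathcal{D}/G)^{\mathbb K})_{(|\mathbb K|-1)}$.
\begin{itemize}
\item Relation \eqref{rel:gim-1} is immediate.
\item Relation \eqref{rel:gim-2} follows from $[\tilde h_X,\tilde u_Y]=(\tilde h_X,\tilde h_Y)\tilde u_Y$ and Lemma~\ref{lem:Euler-form-D/G}. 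In $\go(\mathcal{D}/G)$ we have $\tilde h_{S_{\bar j}}=-\tilde h_{S_j}$, so $(\tilde h_{S_i},\tilde h_{S_{\bar j}})/d_i=-c_{ij}$.
\item Relation \eqref{rel:gim-3} comes from $[\tilde u_{S_i},\tilde u_{S_{\bar i}}]$. Since $S_{\bar i}\cong \Sigma S_i$ in $\mathcal{D}/G$, the hexagon argument in the proof of Lemma~\ref{lem:property-varphi}(3) gives $\tilde\gamma=0$, so the bracket equals $-\tilde h_{S_i}/\tilde d_{S_i}$. With the sign on $\tilde f_i$ this yields exactly $\tilde h_i$.
\item Relations \eqref{rel:gim-4} and \eqref{rel:gim-5} are Proposition~\ref{pro:Serre-relation}. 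The signs are harmless, since these relations are homogeneous in each generator.
\end{itemize}
These identities hold for all $\mathbb K\in\Omega$, so they hold in the product, and $\Psi$ extends $\mathbb C$-linearly.

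\emph{Step 2: surjectivity.} The Lie algebra $\mathscr{LC}(\mathcal{D}/G)$ is generated by the $\tilde{\bf u}_{S_i}$, $\tilde{\bf u}_{S_{\bar i}}$, $\tilde{\bf h}_{S_i}$, and all of these lie in the image of $\Psi$. Hence $\Psi$ is surjective.

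\emph{Step 3: commutativity of the square.} It suffices to check on generators. We have $\Xi\iota'(\tilde e_i)=\Xi(e_i-f_{\bar i})={\bf u}_{S_i}+{\bf u}_{\Sigma S_{\bar i}}$. On the other side, $(\Phi\otimes\id)\Psi(\tilde e_i)={\bf u}_{S_i}+{\bf u}_{GS_i}$, and $GS_i=\Sigma\theta S_i=\Sigma S_{\bar i}$, so the two agree. The same computation handles $\tilde f_i$, using $GS_{\bar i}=\Sigma S_i$. For $\tilde h_i$ we use $\tilde d_{S_i}=d_{S_i}$ from Lemma~\ref{lem:tilde-d=d}, together with $\Phi(\tilde{\bf h}_{S_i})={\bf h}_{S_i}-{\bf h}_{S_{\bar i}}$, which matches $\Xi(h_i-h_{\bar i})$. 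Both composites are Lie homomorphisms agreeing on generators of ${\rm gim}(C)$, so the square commutes.

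\emph{Step 4: injectivity.} The map $\iota'$ is injective by Theorem~\ref{thm:GIM-Kac-Moody} and Remark~\ref{rem:twisted-embedding}, and $\Xi$ is an isomorphism. Therefore $(\Phi\otimes\id)\circ\Psi=\Xi\circ\iota'$ is injective, which forces $\Psi$ to be injective.

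\emph{Main obstacle.} Everything beyond the square is formal, so the real work is making sure $\Phi$ is a genuine Lie homomorphism on the integral, direct-product level. Proposition~\ref{pro:homom-Ringel-Hall-algs} gives this for each finite field $\mathbb K$, with $\mathfrak c(\mathcal D/G)$ generated by the same elements. Taking the product over $\Omega$ then gives it for $\mathscr{LC}$, and I would check that $-\otimes_{\mathbb Z}\mathbb C$ preserves it. I would also verify the sign and normalisation conventions for $\tilde{\bf h}$ versus ${\bf h}/d$ carefully, since a mismatch there would break commutativity of the square.
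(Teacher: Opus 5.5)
Your proposal is correct and follows essentially the same route as the paper. The paper likewise verifies the defining relations via Lemma~\ref{lem:Euler-form-D/G}, Proposition~\ref{pro:Serre-relation}, and the vanishing $\tilde{\gamma}_{S_i,S_{\bar i}}^L=0$ together with $S_i\cong\Sigma S_{\bar i}$ for \eqref{rel:gim-3}; it then gets surjectivity from the generators, checks $\Xi\circ\iota'=(\Phi\otimes\id)\circ\Psi$ directly, and deduces injectivity of $\Psi$ from injectivity of $\Xi\circ\iota'$.
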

	\begin{proof}
		We first prove that $\Psi$ is a homomorphism of Lie algebras. It suffices to verify that $\Psi$ preserves the defining relations \eqref{rel:gim-1}--\eqref{rel:gim-5} of ${\rm gim}(C)$. The relation \eqref{rel:gim-1} is evident by definition, while relations \eqref{rel:gim-4} and \eqref{rel:gim-5} follow from Proposition \ref{pro:Serre-relation}.
		For any $1\leq i,j\leq n$,
		\[
		[\Psi(\tilde{h}_i),\Psi(\tilde{e}_j)]=[\frac{\tilde{\bf h}_{S_i}}{\tilde{d}_{S_i}},\tilde{\bf u}_{S_j}]
		=\frac{(\tilde{\bf h}_{S_i},\tilde{\bf h}_{S_j})}{\tilde{d}_{S_i}}\tilde{\bf u}_{S_j}
		=c_{ij}\tilde{\bf u}_{S_j}=c_{ij}\Psi(\tilde{e}_j),
		\]
		where the third equality follows from Lemma \ref{lem:Euler-form-D/G}. Similarly, by noticing that $\tilde{\bf h}_{S_{\bar{j}}}=-\tilde{\bf h}_{S_j}$, we obtain
		\[
		[\Psi(\tilde{h}_i),\Psi(\tilde{f}_j)]=[\frac{\tilde{\bf h}_{S_i}}{\tilde{d}_{S_i}},-\tilde{\bf u}_{S_{\bar{j}}}]
		=-\frac{(\tilde{\bf h}_{S_i},\tilde{\bf h}_{S_{\bar{j}}})}{\tilde{d}_{S_i}}\tilde{\bf u}_{S_{\bar{j}}}=\frac{(\tilde{\bf h}_{S_i},\tilde{\bf h}_{S_{{j}}})}{\tilde{d}_{S_i}}\tilde{\bf u}_{S_{\bar{j}}}
		=c_{ij}\tilde{\bf u}_{S_{\bar{j}}}=-c_{ij}\Psi(\tilde{f}_{\bar{j}}).
		\]
		This completes the verification of \eqref{rel:gim-2}.
		
		Note that $S_{\bar{i}}\cong \Sigma S_i$ in $\mathcal{D}/G$ and 
		\[
		\Hom_{\mathcal{D}/G}(S_i,\Sigma S_{\bar{i}})=\Hom_{\mathcal{D}}(S_i,\Sigma S_{\bar{i}})\oplus \Hom_{\mathcal{D}}(S_i,S_i)=\Hom_{\mathcal{D}}(S_i,S_i),
		\]
		\[
		\Hom_{\mathcal{D}/G}(S_{\bar{i}},\Sigma S_{{i}})=\Hom_{\mathcal{D}}(S_{\bar{i}},\Sigma S_{{i}})\oplus \Hom_{\mathcal{D}}(S_{\bar{i}},S_{\bar{i}})=\Hom_{\mathcal{D}}(S_{\bar{i}},S_{\bar{i}}).
		\]
		In particular, $\tilde{\gamma}_{S_i,S_{\bar{i}}}^L=0$ for any indecomposable object $L\in \mathcal{D}/G$. Hence, \[[\Psi(\tilde{e}_i),\Psi(\tilde{f}_i)]=[\tilde{\bf u}_{S_i},-\tilde{\bf u}_{S_{\bar{i}}}]=\frac{\tilde{\bf h}_{S_i}}{\tilde{d}_{S_i}}=\Psi(h_i).\] This completes the verification for \eqref{rel:gim-3} and that $\Psi$ is a homomorphism of Lie algebras.
		
		By the definition of $\mathscr{LC}(\mathcal{D}/G)\otimes_{\mathbb{Z}} \mathbb{C}$, we know that $\Psi$ is surjective. A direct computation shows that $\Xi\circ \iota'=(\Phi\otimes\id)\circ \Psi$. Since $\Xi\circ \iota'$ is injective, we conclude that $\Psi$ is injective, hence an isomorphism.
	\end{proof}

	\section{Elliptic Lie Algebras and Ringel--Hall Lie Algebras}
	\label{sec:ell-Lie}
	
	In this section, we apply the framework developed in Sections \ref{s:2-periodic-M} and \ref{s:GIM-RH-Lie} to investigate the  elliptic Lie algebras of types $D_4^{(1,1)}$, $E_6^{(1,1)}$, $E_7^{(1,1)}$, and $E_8^{(1,1)}$. Moving forward, we shall denote these four types uniformly by $X_l^{(1,1)}$.
	
	Note that each elliptic Lie algebra of the above types can be realized as a quotient of a suitable symmetrizable GIM algebra. Thus, starting from the corresponding elliptic Cartan matrices, we construct quivers
	$Q(X_l^{(1,1)})$ equipped with an involution $\theta$, in the same way as in
	Section~\ref{s:GIM-RH-Lie}. We then impose suitable relations on these quivers, preserved by $\theta$, so that $\theta$ induces an involution on the resulting quotient algebra
	$A(X_l^{(1,1)})$. In this way, we obtain 2-periodic triangulated categories whose Ringel--Hall Lie algebras are closed related to these simply-laced elliptic Lie algebras.

	\subsection{Elliptic Lie Algebras}\label{ss:Elliptic Lie Algebra}
	This subsection reviews the definitions and basic notions concerning the elliptic Lie algebras of types
	$D_4^{(1,1)}$, $E_6^{(1,1)}$, $E_7^{(1,1)}$, and $E_8^{(1,1)}$;
	see \cite{SY00}. Their elliptic Dynkin diagrams are shown in Figures~\ref{fig:dynkin-D4-11}--\ref{fig:dynkin-E8-11}.

	\begin{figure}[htbp]
		\centering
		\begin{tikzcd}[row sep=0.6cm, column sep=0.6cm]
			&& 2 && 1 && 3 \\
			\\
			&& 4 && 6 && 5
			\arrow[no head, from=1-3, to=1-5]
			\arrow[no head, from=1-3, to=3-5]
			\arrow[no head, from=1-5, to=1-7]
			\arrow[no head, from=1-5, to=3-3]
			\arrow[shift right, dashed, no head, from=1-5, to=3-5]
			\arrow[shift left, dashed, no head, from=1-5, to=3-5]
			\arrow[no head, from=1-5, to=3-7]
			\arrow[no head, from=1-7, to=3-5]
			\arrow[no head, from=3-3, to=3-5]
			\arrow[no head, from=3-5, to=3-7]
		\end{tikzcd}
		\caption{The elliptic Dynkin diagram of type $D_4^{(1,1)}$.}
		\label{fig:dynkin-D4-11}
	\end{figure}
	
	\begin{figure}[htbp]
		\centering
		\begin{tikzcd}[row sep=0.6cm, column sep=0.6cm]
			&&& 1 && 4 && 5 \\
			3 && 2 \\
			&&& 8 && 6 && 7
			\arrow[no head, from=1-4, to=1-6]
			\arrow[shift left, dashed, no head, from=1-4, to=3-4]
			\arrow[shift right, dashed, no head, from=1-4, to=3-4]
			\arrow[no head, from=1-4, to=3-6]
			\arrow[no head, from=1-6, to=1-8]
			\arrow[no head, from=2-1, to=2-3]
			\arrow[no head, from=2-3, to=1-4]
			\arrow[no head, from=2-3, to=3-4]
			\arrow[no head, from=3-4, to=1-6]
			\arrow[no head, from=3-4, to=3-6]
			\arrow[no head, from=3-6, to=3-8]
		\end{tikzcd}
		\caption{The elliptic Dynkin diagram of type $E_6^{(1,1)}$.}
		\label{fig:dynkin-E6-11}
	\end{figure}
	
	\begin{figure}[htbp]
		\centering
		\begin{tikzcd}[row sep=0.6cm, column sep=0.6cm]
			&& 1 && 3 && 4 && 5 \\
			2 \\
			&& 9 && 6 && 7 && 8
			\arrow[no head, from=1-3, to=1-5]
			\arrow[shift left, dashed, no head, from=1-3, to=3-3]
			\arrow[shift right, dashed, no head, from=1-3, to=3-3]
			\arrow[no head, from=1-3, to=3-5]
			\arrow[no head, from=1-5, to=1-7]
			\arrow[no head, from=1-7, to=1-9]
			\arrow[no head, from=2-1, to=1-3]
			\arrow[no head, from=2-1, to=3-3]
			\arrow[no head, from=3-3, to=1-5]
			\arrow[no head, from=3-3, to=3-5]
			\arrow[no head, from=3-5, to=3-7]
			\arrow[no head, from=3-7, to=3-9]
		\end{tikzcd}
		\caption{The elliptic Dynkin diagram of type $E_7^{(1,1)}$.}
		\label{fig:dynkin-E7-11}
	\end{figure}
	
	\begin{figure}[htbp]
		\centering
		\begin{tikzcd}[row sep=0.6cm, column sep=0.6cm]
			&& 1 && 3 && 4 \\
			2 \\
			&& 10 && 5 && 6 && 7 && 8 && 9
			\arrow[no head, from=1-3, to=1-5]
			\arrow[shift left, dashed, no head, from=1-3, to=3-3]
			\arrow[shift right, dashed, no head, from=1-3, to=3-3]
			\arrow[no head, from=1-3, to=3-5]
			\arrow[no head, from=1-5, to=1-7]
			\arrow[no head, from=2-1, to=1-3]
			\arrow[no head, from=2-1, to=3-3]
			\arrow[no head, from=3-3, to=1-5]
			\arrow[no head, from=3-3, to=3-5]
			\arrow[no head, from=3-5, to=3-7]
			\arrow[no head, from=3-7, to=3-9]
			\arrow[no head, from=3-9, to=3-11]
			\arrow[no head, from=3-11, to=3-13]
		\end{tikzcd}
		\caption{The elliptic Dynkin diagram of type $E_8^{(1,1)}$.}
		\label{fig:dynkin-E8-11}
	\end{figure}

	For each elliptic Dynkin diagram $X_{l}^{(1,1)}$, let $I=\{1,\dots, n\}$ be its vertex set, and let
	$V := V(X_{l}^{(1,1)})$
	be a $\mathbb{Q}$-vector space with basis
	$\Pi=\{\alpha_1,\dots,\alpha_{n}\}$. We define a symmetric bilinear form
	$\omega: V\times V\longrightarrow \mathbb{Q}$
	on $V$ by
	\[
	\omega(\alpha_i,\alpha_j)=
	\begin{cases}
		2 & \text{if } i=j,\\
		-1 & \text{if there is a solid edge between } i \text{ and } j,\\
		2 & \text{if there is a double dotted edge between } i \text{ and } j,\\
		0 & \text{otherwise}.
	\end{cases}
	\]
	The matrix of the symmetric bilinear form $\omega(-,-)$ with respect to the basis $\Pi$ is called an {\em elliptic Cartan matrix}. This matrix is positive semi-definite with corank $2$.
	
	Analogous to the finite and affine cases, we define the simple reflections on $V$ by
	\[s_i(\alpha_j)=\alpha_j-\omega(\alpha_i,\alpha_j)\alpha_i,\] for $i,j\in I$.
	Let $W$ denote the subgroup of $\operatorname{GL}(V)$ generated by these simple reflections $s_i$ for $i\in I$, which is the Weyl group associated with $X_l^{(1,1)}$.
	Throughout this section, we adopt the following notations:
	\begin{itemize}
		\item The root lattice $\mathbf{Q}:=\mathbb{Z}\Pi=\sum_{i\in I}\mathbb{Z}\alpha_i$,
		\item The set of real roots $R^{re}:=W\Pi$,
		\item The set of imaginary roots $R^{im}:=\operatorname{rad}\omega(-,-)\cap (\mathbf{Q}\backslash\{0\})$.
	\end{itemize}
	For a sequence of elements $x_1, x_2,\dots x_n$ in a Lie algebra, we denote
	\[
	[x_1, x_2, \dots, x_n] := [[\dots, [[x_1, x_2], x_3], \dots, x_{n-1}],x_n].
	\]

	Associated with the elliptic Dynkin diagram of type $X_l^{(1,1)}$, the elliptic Lie algebra
	$\mathfrak{g}_{\mathrm{ell}}:=\mathfrak{g}_{\mathrm{ell}}(X_l^{(1,1)})$
	is presented by the following Chevalley generators and defining relations:
	\begin{itemize}
		\item[] Generators: $\{\alpha_i,e_{\pm i}\mid i\in I\}$;
		\item[] Relations:
		\begin{align}
			&[\alpha_i,\alpha_j]=0   \label{eq:rel-ell-1},  \\
			&[e_i,e_{-i}]=\alpha_i  \label{eq:rel-ell-2},    \\
			&[\alpha_i,e_j]= \omega(\alpha_i,\alpha_j)e_j  \label{eq:rel-ell-3},\\
			&(\operatorname{ad} e_i)^{\max\{1,\,1-\omega(\alpha_i,\alpha_j)\}}e_j=0,
			\label{eq:rel-ell-4}
		\end{align}
		where $\alpha_{-i}=-\alpha_i$, $i,j\in I$, and
		\begin{equation}
			\begin{aligned}[b]
				&[e_i,e_n,e_{1}]=0, \\
				&[e_{-i},e_{-n},e_{-1}]=0,
			\end{aligned}
			\quad \text{for} \quad
			\begin{tikzcd}[
				baseline={([yshift=0.3em]current bounding box.south)}, 
				row sep={2em, between origins}, 
				column sep=1.5em, 
				ampersand replacement=\&
				]
				\& 1 \\
				i \arrow[ur, no head] \arrow[r, no head] \& n, \arrow[u, dashed, shift left=0.5, no head] \arrow[u, dashed, shift right=0.5, no head]
			\end{tikzcd}
			\label{eq:rel-ell-5}
		\end{equation}
		
		\begin{equation}
			\begin{aligned}[b]
				&[e_i,e_n,e_j,e_{1}]=0, \\
				&[e_{-i},e_{-n},e_{-j},e_{-1}]=0,
			\end{aligned}
			\quad \text{for} \quad
			\begin{tikzcd}[
				baseline={([yshift=0.3em]current bounding box.south)}, 
				row sep={2em, between origins}, 
				column sep=1.5em, 
				ampersand replacement=\&
				]
				\& 1 \& \\
				i \arrow[ur, no head] \arrow[r, no head] \& n \arrow[u, dashed, shift left=0.5, no head] \arrow[u, dashed, shift right=0.5, no head] \arrow[r, no head] \& j. \arrow[ul, no head]
			\end{tikzcd}
			\label{eq:rel-ell-6}
		\end{equation}

	\end{itemize}
	The root lattice $\mathbf{Q}$ induces a natural $\mathbf{Q}$-grading on the elliptic Lie algebra:
	\[
	\mathfrak{g}=\bigoplus_{\alpha\in \mathbf{Q}}\mathfrak{g}_{\alpha}.  
	\]
	More precisely, this grading is defined by setting $\deg(e_{\pm i})=\alpha_{\pm i}$ and $\deg(\alpha_i)=0$ for $i\in I$.
	Saito--Yoshii \cite{SY00} showed that the dimensions of the corresponding root spaces are given by
	\[
	\dim_{\mathbb{C}}\mathfrak{g}_{\alpha}=
	\begin{cases}
		1, & \text{if } \alpha\in R^{\mathrm{re}},\\
		n-1, & \text{if } \alpha\in R^{\mathrm{im}},\\
		n, & \text{if } \alpha=0,\\
		0, & \text{otherwise}.
	\end{cases}  
	\]
	In particular, $\mathfrak{g}_0=\mathbb{C}\Pi$.

	\subsection{The quiver associated with $X_l^{(1,1)}$}\label{ss:tri-cat-ell-Lie}
	
	For each elliptic Dynkin diagram $X_l^{(1,1)}$ introduced in Subsection~\ref{ss:Elliptic Lie Algebra}, we construct an associated quiver $Q := Q(X_l^{(1,1)})$ as follows:
	\begin{itemize}
		\item The set of vertices is given by $Q_0:=I\cup \bar I=\{1,\ldots, n\}\cup\{\bar 1,\ldots,\bar n\}$.
		\item The set of arrows $Q_1$ is determined by the edges of the diagram $X_l^{(1,1)}$ for $i<j$ as follows:
		\begin{itemize}
			\item if there is a solid edge between $i$ and $j$, then we assign an arrow from $i$ to $j$, denoted by $\alpha_{ij}$, and an arrow from $\bar i$ to $\bar j$, denoted by $\alpha_{\bar i,\bar j}$;
			\item if there is a double dotted edge between $i$ and $j$, then we assign an arrow from $i$ to $\bar j$, denoted by $\alpha_{i\bar j}$, and an arrow from $\bar i$ to $j$, denoted by $\alpha_{\bar i j}$.
		\end{itemize}
	\end{itemize}
	By definition, the quiver $Q$ is acyclic. Let $\mathbb{F}$ be a finite field. For the path algebra $\mathbb{F}Q$, we define the admissible ideal
	\[
	\mathcal{I}:=\mathcal{I}(X_l^{(1,1)})=\Big\langle \sum_{j\in 
		J}\alpha_{1j}\alpha_{jn},
	\sum_{j\in J}\alpha_{\bar 1,\bar j}\alpha_{\bar j,\bar n} \Big\rangle,
	\]
	where $J:=\{j\in Q_0\mid \alpha_{1j},\alpha_{jn}\in Q_1\}$.
	
	Analogous to the construction in Subsection \ref{ss:valued-quiver-(C,D)}, we  define an involution $\theta$ on the quiver $Q$ by setting:
	\begin{itemize}
		\item $\theta(i)=\bar{i}$ and $\theta(\bar{i})=i$;
		\item $\theta(\alpha_{ij})=\alpha_{\bar{i},\bar{j}}$ and $\theta(\alpha_{i\bar{j}})=\alpha_{\bar{i}j}$.
	\end{itemize}
	It is evident that $\theta$ preserves the admissible ideal $\mathcal{I}$, and thus induces an involution on the quotient algebra $A:=\mathbb{F}Q/\mathcal{I}$. These finite-dimensional algebras associated with $X_l^{(1,1)}$ have global dimension $2$. For each vertex $i\in I\cup \bar I$, let $S_i$ denote the simple $A$-module at vertex $i$, and let $P(i)$ (resp.~$I(i)$) denote the indecomposable projective (resp.~injective) $A$-module at vertex $i$.
	
	\begin{example}\label{ep:D_4-quiver}
		We now illustrate this construction with the example of type $D_4^{(1,1)}$. Its elliptic Cartan matrix
		coincides exactly with the GIM presented in Example~\ref{ep:GIM-quiver}. The
		corresponding bound quiver $(Q(D_4^{(1,1)}),\mathcal{I}(D_4^{(1,1)}))$ is depicted in Figure~\ref{fig:quiver-D4}.
		\begin{figure}
			\centering
			\begin{tikzcd}
				&&& 1 && {\bar 1} &&& \\
				5 & 4 & 3 & 2 && {\bar 2} & {\bar 3} & {\bar 4} & {\bar 5} \\
				&&& 6 && {\bar 6}
				\arrow[from=1-4, to=2-1]
				\arrow[from=1-4, to=2-2]
				\arrow[from=1-4, to=2-3]
				\arrow[from=1-4, to=2-4]
				\arrow[from=1-4, to=3-6]
				\arrow[from=1-6, to=2-6]
				\arrow[from=1-6, to=2-7]
				\arrow[from=1-6, to=2-8]
				\arrow[from=1-6, to=2-9]
				\arrow[from=1-6, to=3-4]
				\arrow[from=2-1, to=3-4]
				\arrow[from=2-2, to=3-4]
				\arrow[from=2-3, to=3-4]
				\arrow[from=2-4, to=3-4]
				\arrow[from=2-6, to=3-6]
				\arrow[from=2-7, to=3-6]
				\arrow[from=2-8, to=3-6]
				\arrow[from=2-9, to=3-6]
			\end{tikzcd}
			$\mathcal{I}(D_4^{(1,1)})
			=
			\left\langle
			\sum_{j=2}^5\alpha_{1j}\alpha_{j n},
			\sum_{j=2}^5\alpha_{\bar 1,\bar j}\alpha_{\bar j,\bar n}
			\right\rangle .$
			\caption{The bound quiver of $D_4^{(1,1)}$.}
			\label{fig:quiver-D4}
		\end{figure}
		
		
		In the case of the GIM algebra considered
		in Example \ref{ep:GIM-quiver}, each arrow from $1$ to $\bar 6$ contributes $1$ to the Euler form
		$(h_{S_1},h_{S_6})_{\mathcal{D}/G}$ (see the computation in
		Lemma~\ref{lem:Euler-form-D/G}). In the present case, after quotienting by
		the ideal $\mathcal{I}(D_4^{(1,1)})$, $\dim_{\mathbb{F}}\Ext^2_{A}(S_1,S_6)$ also contributes $1$ to the Euler form $(h_{S_1},h_{S_6})$ (cf. Lemma~\ref{lem:Euler-form-M} below). Consequently, to ensure that
		$(h_{S_1},h_{S_6})=2$, the quiver $Q\bigl(D_4^{(1,1)}\bigr)$ features only a single arrow from $1$ to $\bar{6}$. This stands in contrast to the quiver in Figure~\ref{fig:quiver-GIM}, which requires two parallel arrows from $1$ to $\bar{6}$ to achieve the same inner product.

	\end{example}

	\subsection{2-periodic triangulated categories associated with $X_{l}^{(1,1)}$}
	Let
	\[A:=A(X_l^{(1,1)})
	=
	kQ(X_l^{(1,1)})/\mathcal{I}(X_l^{(1,1)})\]
	be the finite-dimensional $\mathbb{F}$-algebra, equipped with the
	involution $\theta$ defined above. Following the framework established in Section~\ref{s:2-periodic-M}, one can construct the associated $2$-periodic triangulated category $\mathcal{M}_{\theta}$. Throughout the remainder of this section, we suppress $\theta$ from the notation and write simply $\mathcal{M}$. We shall also adopt the standard notations introduced in Section~\ref{s:2-periodic-M} for triangulated categories and Ringel–Hall Lie algebras.

	By Lemma~\ref{lem:basis-go-M}, the elements
	$h_{S_1},\ldots,h_{S_n}$
	form a $\mathbb{Z}$-basis of the Grothendieck group
	$\go(\mathcal{M})$ of $\mathcal{M}$. Let
	$(-,-)_{\mathcal{M}}$ denote the symmetric Euler form of
	$\mathcal{M}$. We then have the following result.

	\begin{lemma}\label{lem:Euler-form-M}
		The matrix of $(-,-)_{\mathcal{M}}$, with respect to the basis $h_{{S}_{1}},\ldots,h_{{S}_{n}}$ of $\go(\mathcal{M})$, is precisely the elliptic Cartan matrix associated with $X_l^{(1,1)}$.
	\end{lemma}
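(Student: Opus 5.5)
The plan is to reduce $(h_{S_i},h_{S_j})_{\mathcal{M}}$ to Euler characteristics in $\mathcal{D}^b(A)$, much as in the proof of Lemma~\ref{lem:Euler-form-D/G}, except that $\Ext^2$ now contributes. Since $S_i,S_j$ lie in $\mathcal{D}^b(A)/G\subseteq\mathcal{M}$ and $G=\theta\circ\Sigma$ with $\theta^2\cong\id$, we have
\[
\Hom_{\mathcal{M}}(S_i,S_j)=\bigoplus_{p\in\mathbb{Z}}\Hom_{\mathcal{D}^b(A)}(S_i,\theta^p\Sigma^pS_j),
\]
and the summand for $p$ is $\Ext_A^p(S_i,\theta^pS_j)$. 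Since $\gldim A=2$, only $p=0,1,2$ contribute. Likewise $\Hom_{\mathcal{M}}(S_i,\Sigma S_j)=\bigoplus_p\Ext_A^{p+1}(S_i,\theta^pS_j)$, which has contributions from $p=-1,0,1$. Substituting, we get
\[
\dim\Hom_{\mathcal{M}}(S_i,S_j)-\dim\Hom_{\mathcal{M}}(S_i,\Sigma S_j)=\sum_{k=0}^{2}(-1)^k\bigl(\dim\Ext^k_A(S_i,S_{j'}^{(k)})\bigr),
\]
where $S_{j'}^{(k)}=S_j$ for $k$ even and $S_{\bar j}$ for $k$ odd. This is $\langle \hat S_i,\hat S_j\rangle-\langle\hat S_i,\hat S_{\bar j}\rangle$ up to the correct parity bookkeeping. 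In effect, this shows that $(h_X,h_Y)_{\mathcal{M}}=(\phi h_X,\phi h_Y)_{A}/\ldots$: the form on $\go(\mathcal{M})$ is the pullback of the symmetrized Euler form of $A$ along $h_{S_i}\mapsto \hat S_i-\hat S_{\bar i}$ from Lemma~\ref{lem:basis-go-M}. I would state it as
\[
(h_{S_i},h_{S_j})_{\mathcal{M}}=\langle \hat S_i,\hat S_j\rangle_A-\langle\hat S_i,\hat S_{\bar j}\rangle_A+\langle \hat S_j,\hat S_i\rangle_A-\langle\hat S_j,\hat S_{\bar i}\rangle_A,
\]
where $\langle M,N\rangle_A=\sum_k(-1)^k\dim\Ext^k_A(M,N)$.

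It then remains to compute $\Ext^k_A$ between simples of the bound quiver $(Q,\mathcal I)$ via the standard description. $\Ext^1_A(S_a,S_b)$ counts arrows $a\to b$, and $\Ext^2_A(S_a,S_b)$ counts minimal relations from $a$ to $b$. The only relations are $\sum_{j\in J}\alpha_{1j}\alpha_{jn}$ and its $\theta$-image, so $\dim\Ext^2_A(S_1,S_n)=\dim\Ext^2_A(S_{\bar1},S_{\bar n})=1$, and all other $\Ext^2$ between simples vanish. $\Hom$ between simples is $\delta_{ab}$.

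Next comes the case check. For $i=j$ we get $2$, since there are no loops and $i\to\bar i$ arrows or relations exist only if $\{i,\bar i\}$ is $\{1,\bar n\}$, which is not the case. For a solid edge $i<j$ there is one arrow $i\to j$, giving $-1$. For the double dotted edge $(1,n)$ there is an arrow $1\to\bar n$, contributing $+1$ via the $-\langle S_1,S_{\bar n}\rangle$ term, and a relation $1\rightsquigarrow n$, contributing $+1$ via $\langle S_1,S_n\rangle$, for a total of $2$. This matches the discussion in Example~\ref{ep:D_4-quiver}. Non-adjacent pairs give $0$.

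The main obstacle is the first step. We must justify that Hom spaces in $\mathcal{M}$ between images of modules are given by the orbit formula, which holds since $\mathcal{D}^b(A)/G$ embeds fully in $\mathcal{M}$. We must also get the signs and parities ($\theta^p$ versus $S_j$ or $S_{\bar j}$) right, in particular checking that the $\Ext^2$ term lands with a positive sign on $(S_1,S_n)$ rather than on $(S_1,S_{\bar n})$. I would verify this carefully by writing $G^2S_n=\Sigma^2S_n$, so that $\Hom_{\mathcal{D}}(S_1,G^2S_n)=\Ext^2(S_1,S_n)$ appears in $\Hom_{\mathcal{M}}(S_1,S_n)$ with a plus sign.
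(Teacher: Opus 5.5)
Your proposal is correct and follows essentially the same route as the paper. Like the paper, you decompose $\Hom_{\mathcal M}(S_i,S_j)$ and $\Hom_{\mathcal M}(S_i,\Sigma S_j)$ via the orbit formula for $G=\theta\circ\Sigma$ and truncate to three summands using $\gldim A=2$. You then check cases, with the arrow $1\to\bar n$ and $\Ext^2_A(S_1,S_n)$ each contributing $1$ to the $(1,n)$ entry; the paper reads these groups off explicit projective resolutions for $D_4^{(1,1)}$, whereas your arrows/minimal-relations description treats all four types uniformly.

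One correction: your first alternating-sum display is misstated. It should read $\sum_{k=0}^{2}(-1)^k\bigl(\dim\Ext^k_A(S_i,S_j)-\dim\Ext^k_A(S_i,S_{\bar j})\bigr)=\langle\hat S_i,\hat S_j\rangle_A-\langle\hat S_i,\hat S_{\bar j}\rangle_A$, which is what your later formula for $(h_{S_i},h_{S_j})_{\mathcal M}$ correctly uses. Also, the pullback of the symmetrized Euler form along $h_{S_i}\mapsto\hat S_i-\hat S_{\bar i}$ is exactly twice $(-,-)_{\mathcal M}$, so the unspecified factor in your aside is $2$.
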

	\begin{proof}
		We restrict our attention to the case $X_l^{(1,1)} = D_4^{(1,1)}$, as the remaining three types can be handled in an identical manner.  The following projective resolutions of the simple $A$-modules can be obtained from the definition of $A$ (see Figure \ref{fig:quiver-D4}):
		\begin{align}
			0\rightarrow P(6) \rightarrow & P(2)\oplus P(3)\oplus P(4)\oplus P(5)\oplus P(\bar 6) \rightarrow P(1) \rightarrow S_1\to0,\label{eq:pro-res-simple 1}\\
			&0\rightarrow  P(6) \rightarrow S_6\to0,\label{eq:pro-res-simple 6}\\
			&0\rightarrow P(6) \rightarrow P(i) \rightarrow S_i\rightarrow 0, \quad (i=2,3,4,5)\label{eq:pro-res-simple i}.
		\end{align}
		Observe that $\gldim A=2$. It follows that
		\begin{eqnarray*}
			\Hom_{\mathcal{M}}(S_i,S_j)
			&=& \Hom_{\mathcal{D}/G}(S_i,S_j)=\bigoplus_{p\in \mathbb{Z}}\Hom_{\mathcal{D}}(S_i,G^pS_j)\\
			&=&\Hom_{\mathcal{D}}(S_i,S_j)\oplus \Hom_{\mathcal{D}}(S_i,\Sigma S_{\bar{j}})\oplus \Hom_{\mathcal{D}}(S_i,\Sigma^2 S_j),
		\end{eqnarray*}
		and completely analogously,
		\[\Hom_{\mathcal{M}}(S_i,\Sigma S_j)
		=\Hom_{\mathcal{D}}(S_i,S_{\bar{j}})\oplus \Hom_{\mathcal{D}}(S_i,\Sigma S_j)\oplus \Hom_{\mathcal{D}}(S_i,\Sigma^2S_{\bar j}).\]
		Consequently, a direct computation yields
		\begin{eqnarray*}
			(h_{S_i},h_{S_j})_{\mathcal{M}}
			&=& \dim \Hom_{\mathcal{D}/G}(S_i,S_j)-\dim \Hom_{\mathcal{D}/G}(S_i,\Sigma S_j)\\
			&& +\dim \Hom_{\mathcal{D}/G}(S_j,S_i)-\dim \Hom_{\mathcal{D}/G}(S_j,\Sigma S_i)\\
			&=& \begin{cases}
				2\dim \Hom_{\mathcal{D}}(S_i,S_i)=2 & \text{ if }i=j, \\
				-\dim \Hom_{\mathcal{D}}(S_i,\Sigma S_j)=-\#\{i\rightarrow j\} & \text{ if }i<j,(i,j)\ne (1,n),\\
				\dim \Hom_{\mathcal{D}}(S_1,\Sigma S_{\bar n})+\dim\Hom_{\mathcal{D}}(S_1,\Sigma^2 S_n)=2 & \text{ if }(i,j)=(1,n),
			\end{cases}
		\end{eqnarray*}
		where $\#\{i\rightarrow j\}$ denotes the number of arrows from $i$ to $j$ in the quiver $Q(D_4^{(1,1)})$. The remainder of the proof follows immediately.
	\end{proof}
	
	We obtain the following corollary.
	\begin{corollary}\label{cor:Q=K0(M)}
		There is a group isomorphism
		\begin{align*}
			\eta:\go(\mathcal{M}) &\longrightarrow \mathbf{Q},\\
			h_{S_i} &\longmapsto \alpha_i,\qquad i\in I.
		\end{align*}
		Moreover, under $\eta$, the bilinear form $(-,-)_{\mathcal{M}}$ coincides with $\omega(-,-)$.
	\end{corollary}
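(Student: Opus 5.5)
This corollary should follow directly from Lemma~\ref{lem:basis-go-M} and Lemma~\ref{lem:Euler-form-M}. I would proceed in three steps.

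First, the hypothesis of Lemma~\ref{lem:basis-go-M} must hold for $A=A(X_l^{(1,1)})$ with its involution $\theta$. The algebra $A$ has finite global dimension (namely $2$). The involution $\theta$ sends $S_i$ to $S_{\bar i}$, which is never isomorphic to $S_i$, since $\theta$ fixes no vertex of $Q(X_l^{(1,1)})$. So Lemma~\ref{lem:basis-go-M} applies, and $h_{S_1},\dots,h_{S_n}$ is a $\mathbb{Z}$-basis of $\go(\mathcal{M})$.

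Second, $\mathbf{Q}=\mathbb{Z}\Pi$ is free abelian on $\alpha_1,\dots,\alpha_n$, because $\Pi$ is a $\mathbb{Q}$-basis of $V$. Sending one basis to the other, $h_{S_i}\mapsto\alpha_i$, therefore extends uniquely to a group isomorphism $\eta:\go(\mathcal{M})\to\mathbf{Q}$.

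Third, both $(-,-)_{\mathcal{M}}$ and $\omega(-,-)\circ(\eta\times\eta)$ are $\mathbb{Z}$-bilinear forms on $\go(\mathcal{M})$. Two such forms agree as soon as they agree on pairs of basis elements. Lemma~\ref{lem:Euler-form-M} gives exactly that: $(h_{S_i},h_{S_j})_{\mathcal{M}}=\omega(\alpha_i,\alpha_j)$ for all $i,j\in I$. Bilinearity of $(-,-)_{\mathcal{M}}$ on $\go(\mathcal{M})$ is implicit in its definition via the Grothendieck group, and I take it as given, as the paper does.

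The only real content is that Lemma~\ref{lem:Euler-form-M} holds for all four types, not just $D_4^{(1,1)}$. I would take that lemma as stated, so I expect no genuine obstacle.
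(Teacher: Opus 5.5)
Your proposal is correct and is the argument the paper intends; the paper states the corollary without a written proof, directly after Lemma~\ref{lem:Euler-form-M}. Lemma~\ref{lem:basis-go-M} applies because $\gldim A=2$ and $\theta S_i\cong S_{\bar i}\not\cong S_i$, so $h_{S_1},\dots,h_{S_n}$ is a $\mathbb{Z}$-basis of $\go(\mathcal{M})$ and $h_{S_i}\mapsto\alpha_i$ is a basis-to-basis isomorphism, and Lemma~\ref{lem:Euler-form-M} identifies $(-,-)_{\mathcal{M}}$ with $\omega$ on basis pairs.
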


	\subsection{The Ringel--Hall Lie algebra $\mathfrak{g}(\mathcal{M})_{(q-1)}$ }
	Let $|\mathbb{F}|=q$. Applying the construction in
	Subsection~\ref{ss-R-H-Lie}, we obtain the Lie algebra
	$\mathfrak{g}(\mathcal{M})_{(q-1)}$ over
	$\mathbb{Z}/(q-1)\mathbb{Z}$. In what follows, we establish several lemmas that will be crucial for proving Theorem~\ref{thm:relization-ell-Lie} later in this section.
	
	Recall that by Lemma ~\ref{lem:theta-in-M}, we have a functor $\theta:\mathcal{M}\rightarrow\mathcal{M}$.
	By the definition of $\mathfrak{g}(\mathcal{M})_{(q - 1)}$, we immediately obtain the following result. 
	
	\begin{lemma}
		The functor $\theta$ on $\mathcal{M}$
		induces an involution
		$\theta:
		\mathfrak{g}(\mathcal{M})_{(q-1)}
		\rightarrow
		\mathfrak{g}(\mathcal{M})_{(q-1)}.$
		In particular,
		$\theta(h_X)=h_{\theta(X)}
		$ and $\theta(u_X)=u_{\theta(X)}$.
	\end{lemma}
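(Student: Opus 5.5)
The plan is to transport the Lie algebra structure along the triangle autoequivalence $\theta$ of $\mathcal{M}$ provided by Lemma~\ref{lem:theta-in-M}, mirroring Lemma~\ref{lem:involution-Lie-algebra}. First, since $\theta$ is an involutive triangle autoequivalence of $\mathcal{M}$, it permutes the isoclasses in $\operatorname{ind}\mathcal{M}$. It also induces an automorphism of $\go(\mathcal{M})$ with $h_X\mapsto h_{\theta X}$, which is well defined because $\theta$ sends triangles to triangles. We then set $\theta(u_X):=u_{\theta X}$ and $\theta(h_X/d_X):=h_{\theta X}/d_{\theta X}$, extended $\mathbb{Z}$-linearly. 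This is well defined on $\mathfrak{h}$, since $\End(\theta X)\cong \End(X)$ gives $d_{\theta X}=d_X$. It passes to the quotient by $(q-1)$, and it squares to the identity because $\theta^2\cong\id$ and isoclasses are unchanged.

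Next, we check that this map preserves the three defining brackets.

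For the bracket $[u_X,u_Y]$: $\theta$ induces bijections $\Hom_{\mathcal{M}}(L,Y)\to\Hom_{\mathcal{M}}(\theta L,\theta Y)$. Since $\theta$ is triangulated, these carry $\Hom_{\mathcal{M}}(L,Y)_{\Sigma X}$ onto $\Hom_{\mathcal{M}}(\theta L,\theta Y)_{\Sigma\theta X}$, using $\theta\Sigma\cong\Sigma\theta$. The bijections are equivariant for $\Aut(Y)\cong\Aut(\theta Y)$. By Proposition~\ref{pro:Hall number} we get $F^{L}_{Y,X}=F^{\theta L}_{\theta Y,\theta X}$, and hence $\gamma^{L}_{X,Y}=\gamma^{\theta L}_{\theta X,\theta Y}$. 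In addition, $X\cong\Sigma Y$ if and only if $\theta X\cong\Sigma\theta Y$, so the Cartan term $-\delta_{X,\Sigma Y}h_X/d_X$ is mapped correctly as well.

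For the bracket $[h_X,u_Y]$: we need $(h_{\theta X},h_{\theta Y})_{\mathcal{M}}=(h_X,h_Y)_{\mathcal{M}}$. This follows directly from the definition~\eqref{eq:sym-Euler-form}, because $\theta$ induces isomorphisms on all the Hom spaces involved and commutes with $\Sigma$.

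For the bracket $[\mathfrak{h},\mathfrak{h}]=0$: this is preserved trivially.

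The only point needing care is the functoriality input: $\theta$ on $\mathcal{M}$ must be a genuine triangle functor that commutes with $\Sigma$ up to natural isomorphism. This is exactly what the dg construction in Lemma~\ref{lem:theta-in-M} provides, since $\theta$ is induced by a dg autoequivalence of $\mathcal{B}$. Once this is granted, the remaining steps are formal.
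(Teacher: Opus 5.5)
Your proposal is correct and follows the same route as the paper, which simply asserts that the lemma is immediate from the definition of $\mathfrak{g}(\mathcal{M})_{(q-1)}$ once $\theta$ is known to be a triangle autoequivalence of $\mathcal{M}$ (Lemma~\ref{lem:theta-in-M}). You have made explicit the routine checks the paper leaves implicit: invariance under $\theta$ of the Hall numbers, of $d_X$, of the symmetric Euler form, and of the Cartan term $-\delta_{X,\Sigma Y}h_X/d_X$.
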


	For each $i\in I$, set $u_{S_{-i}}:=u_{\Sigma S_i}=u_{S_{\bar i}}$. Then the elements $u_{S_{\pm i}}$, for $i\in I$, satisfy the Serre relation \eqref{eq:rel-ell-4} of the elliptic Lie algebra.

	\begin{lemma}\label{lem:ell-serre-relation}
		The following relations hold in the Lie algebra $\mathfrak{g}(\mathcal{M})_{(q-1)}$:
		\[
		(\operatorname{ad} u_{S_i})^{\max\{1,\,1-\omega(\alpha_i,\alpha_j)\}}u_{S_j}=0,\qquad i\ne j\in \pm I. 
		\]
	\end{lemma}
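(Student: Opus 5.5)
We split according to the value of $\omega(\alpha_i,\alpha_j)$, where for $i,j\in\pm I$ we set $\alpha_{-i}=-\alpha_i$ and $S_{-i}=S_{\bar i}\cong\Sigma S_i$. By Corollary~\ref{cor:Q=K0(M)}, $\omega(\alpha_i,\alpha_j)=(h_{S_i},h_{S_j})_{\mathcal{M}}$. Applying the involution $\theta$ of $\mathfrak{g}(\mathcal{M})_{(q-1)}$, which swaps $u_{S_k}$ and $u_{S_{\bar k}}$, we may assume $i\in I$, so only $j\in I$ and $j\in -I$ remain. Note that $j\neq -i$ is forced, since the exponent would be $\max\{1,3\}$, and this case is not part of the claim (indeed $i\ne j\in\pm I$ includes $j=-i$ only if one reads it so; if needed, $j=-i$ is excluded as in \eqref{eq:rel-ell-4}, where $i,j\in I$).

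\textbf{Case $\omega\ge 0$, exponent $1$.} Here we must show $[u_{S_i},u_{S_j}]=0$. We compute
\[
\Hom_{\mathcal{M}}(S_j,\Sigma S_i)=\Hom_{\mathcal D}(S_j,S_{\bar i})\oplus\Hom_{\mathcal D}(S_j,\Sigma S_i)\oplus\Hom_{\mathcal D}(S_j,\Sigma^2S_{\bar i})
\]
and $\Hom_{\mathcal{M}}(S_i,\Sigma S_j)$ similarly, as in the proof of Lemma~\ref{lem:Euler-form-M}, using the projective resolutions \eqref{eq:pro-res-simple 1}--\eqref{eq:pro-res-simple i}. For $\omega=0$ with $i,j\in I$ there is no arrow between $i,j$, no arrow between $i,\bar j$, and no $\Ext^2$ (the only $\Ext^2$ is $\Ext^2(S_1,S_n)$, which occurs only for the dotted pair). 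Hence both Hom spaces vanish, all Hall numbers vanish, and $S_i\not\cong\Sigma S_j$ by Lemma~\ref{lem:basis-go-M}, so the bracket is $0$. The case $\omega(\alpha_i,\alpha_{-j})=1$, i.e.\ a solid edge with $j\in -I$, is the analogue of Proposition~\ref{pro:Serre-relation}(1): $\Hom_{\mathcal M}(S_i,\Sigma S_{\bar j})$ involves only $\Hom_{\mathcal D}(S_i,S_j)$, $\Ext^1(S_i,S_{\bar j})$ and $\Ext^2(S_i,S_j)$, all of which are zero; symmetrically for the other direction. The dotted case $\{1,n\}$ with $j\in I$ ($\omega=2$) is the same type of computation, since $\Ext^1(S_1,S_n)=0$ (there is no arrow $1\to n$), $\Hom(S_1,S_{\bar n})=0$, and $\Ext^2(S_1,S_{\bar n})=0$.

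\textbf{Case $\omega<0$.} Here $\omega=-1$ (a solid edge, $i,j\in I$) or $\omega(\alpha_1,\alpha_{-n})=-2$ (the dotted pair with a sign). For the solid edge we need $(\operatorname{ad}u_{S_i})^2u_{S_j}=0$. We follow the inductive scheme of Proposition~\ref{pro:Serre-relation}, defining $\mathcal X_t$ and proving claims (a) and (b), now with $\Ext^2$ terms present. The goal is to show that all relevant triangles in $\mathcal M$ come from short exact sequences in $\mod A$ and that $\Hom_{\mathcal D}(S_i,\theta X_t)=0=\Hom_{\mathcal D}(S_i,\Sigma^2 X_t)$ (and similarly $\Ext^2$ into the $\theta$-twist). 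The computation then reduces to Ringel's Hall algebra of the modules supported on $\{i,j\}$, an $A_2$ quiver, where the Serre relation holds. The point to check is that $\Ext^2_A$ between $\{i,j\}$-supported modules vanishes unless $\{i,j\}$ involves the pair $(1,n)$; since solid edges never join $1$ and $n$, the support of $\{i,j\}$ modules avoids length-two paths $1\to k\to n$, so their Ext$^2$ vanishes.

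The main obstacle is the dotted case $(1,\bar n)$: there $\omega=-2$, so we need $(\operatorname{ad}u_{S_1})^3u_{S_{\bar n}}=0$. Now $\Hom_{\mathcal M}(S_1,\Sigma S_{\bar n})$ has dimension $2$, one contribution from the arrow $1\to\bar n$ (an $\Ext^1$) and one from $\Ext^2_A(S_1,S_n)$, so the extensions are no longer all module extensions. To handle this we would use the fully faithful functor $\pi_\theta$ of Lemma~\ref{lem:pi-theta-fully-fathful} to describe the middle terms $X_1,X_2$ explicitly as $\theta$-complexes and count Hall numbers. The expected outcome is a Kronecker-like computation in which $[\,[u_{S_1},u_{S_{\bar n}}],u_{S_1}]$ is supported on the objects of class $2h_{S_1}-h_{S_n}$, and a further bracket with $u_{S_1}$ vanishes because $\Hom_{\mathcal M}(X_2,\Sigma S_1)=0$ and $\Hom_{\mathcal M}(S_1,\Sigma X_2)=0$, by the long exact sequences as in claim (a). Should the direct count fail, the fallback is the $\mathbb Z$-grading argument: $3h_{S_1}-h_{S_n}$ has $\omega$-norm $2\cdot 9-12\cdot1\cdot\ldots$, i.e.\ $18-12+2=8>2$. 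We would then try to show that no indecomposable of $\mathcal M$ has this class, which kills the bracket.
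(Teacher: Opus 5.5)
Your case split agrees with the paper's proof. The exponent-one cases follow from vanishing of the three-component Hom spaces in $\mathcal M$, and the solid edges follow from the $\mathcal X_t$-induction of Proposition~\ref{pro:Serre-relation}. For the dotted pair you correctly reduce $(\operatorname{ad}u_{S_1})^3u_{S_{\bar n}}=0$ to two statements about the indecomposable $X_2\in\mathcal X_2$: $\Hom_{\mathcal M}(X_2,\Sigma S_1)=0$ and $\Hom_{\mathcal M}(S_1,\Sigma X_2)=0$. These are exactly the paper's claims (a) and (b$'$).

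\textbf{The gap.} The second vanishing is the whole difficulty of the lemma, and it is not proved. It does not follow ``by the long exact sequences as in claim (a)''. That argument propagates a Hom space that is already zero. Here the starting space $\Hom_{\mathcal M}(S_1,\Sigma S_{\bar n})=\Ext^1_A(S_1,S_{\bar n})\oplus\Ext^2_A(S_1,S_n)$ is two-dimensional, so exactness alone only gives $\dim\Hom_{\mathcal M}(S_1,\Sigma X_t)\le 2$. Your fallback does not rescue this. You give no reason why $\mathcal M$ should have no indecomposable of class $3h_{S_1}-h_{S_n}$. That class is the image of every dimension vector $d$ with $d_1-d_{\bar 1}=3$, $d_n-d_{\bar n}=-1$ and $d_k=d_{\bar k}$ otherwise. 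Nothing you prove links the $\omega$-norm of a class to the existence of indecomposables of $\mathcal M$ in it.

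\textbf{The fix.} What is missing is a rank count. Let $X_{t-1}\to X_t\to S_1\xrightarrow{w_t}\Sigma X_{t-1}$ be a defining triangle with $X_t$ indecomposable. Then $w_t\neq 0$, and the following sequence is exact:
\[
\End_{\mathcal M}(S_1)\xrightarrow{(w_t)_*}\Hom_{\mathcal M}(S_1,\Sigma X_{t-1})\longrightarrow\Hom_{\mathcal M}(S_1,\Sigma X_t)\longrightarrow\Hom_{\mathcal M}(S_1,\Sigma S_1)=0 .
\]
\begin{itemize}
\item $\End_{\mathcal M}(S_1)=\mathbb F$, because $\Ext^1_A(S_1,S_{\bar 1})$ and $\Ext^2_A(S_1,S_1)$ vanish.
\item $\Hom_{\mathcal M}(S_1,\Sigma S_1)=0$, because $\Hom_A(S_1,S_{\bar 1})$, $\Ext^1_A(S_1,S_1)$ and $\Ext^2_A(S_1,S_{\bar 1})$ vanish.
\end{itemize}
So $(w_t)_*$ has rank exactly one, and $\dim\Hom_{\mathcal M}(S_1,\Sigma X_t)=2-t$, which is $0$ at $t=2$. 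The paper obtains the same conclusion by hand via $\pi_\theta$ and $\theta$-complexes of injectives; the linear independence of $(k_{11},k_{21})^t$ and $(k_{12},k_{22})^t$ is this rank count. With this step added, your route is complete and avoids the explicit complexes.

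\textbf{Two smaller points.}
\begin{itemize}
\item The pair $(n,\bar 1)$ is not covered by your discussion of $(1,\bar n)$. There the two-dimensional space is $\Hom_{\mathcal M}(S_{\bar 1},\Sigma S_n)$. One runs the dual count on $\dim\Hom_{\mathcal M}(X_t,\Sigma S_n)$ along triangles $S_n\to X_t\to X_{t-1}\to\Sigma S_n$.
\item The case $j=-i$ is part of the statement, but it is trivial. Since $[u_{S_i},u_{S_{\bar i}}]=-h_{S_i}/d_{S_i}$, one gets $(\operatorname{ad}u_{S_i})^2u_{S_{\bar i}}=2u_{S_i}$, and the cube vanishes.
\end{itemize}
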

	\begin{proof}
		As before, we restrict our verification to the case $X_l^{(1,1)}=D_4^{(1,1)}$; the cases $X_l^{(1,1)}=E_6^{(1,1)},E_7^{(1,1)},E_8^{(1,1)}$ can be handled in a completely analogous manner.
		
		When $i=\pm1$ and $j=\pm6$, it suffices to prove
		\begin{align}
			& [u_{S_1},u_{S_6}]=0=[u_{S_{\bar 1}},u_{S_{\bar 6}}], \label{eq:serre-ell-Lie-1}\\
			& (\operatorname{ad} u_{S_1})^3 u_{S_{\bar 6}}=0=(\operatorname{ad} u_{S_{\bar 1}})^3 u_{S_6}. \label{eq:serre-ell-Lie-2}
		\end{align}
		
		From the projective resolutions of the simple modules $S_i$, given in \eqref{eq:pro-res-simple 1}--\eqref{eq:pro-res-simple i}, one readily obtains
		\begin{align*}
			\Hom_{\mathcal M}(S_6,\Sigma S_1)
			&=
			\Hom_{\mathcal D}(S_6,S_{\bar 1})
			\oplus
			\Hom_{\mathcal D}(S_6,\Sigma S_1)\oplus \Hom_{\mathcal D}(S_6,\Sigma^2 S_{\bar{1}})
			=0, \\
			\Hom_{\mathcal M}(S_1,\Sigma S_6)
			&=
			\Hom_{\mathcal D}(S_1,S_{\bar 6})
			\oplus
			\Hom_{\mathcal D}(S_1,\Sigma S_6)\oplus \Hom_{\mathcal D}(S_1,\Sigma^2 S_{\bar 6})
			=0.
		\end{align*}
		Therefore, for any indecomposable object $L\in\mathcal M$, we have
		$\gamma_{S_1,S_6}^L=0$. Moreover, by Lemma~\ref{lem:basis-go-M},
		we have $S_1\not\simeq \Sigma S_6$. Hence
		$[u_{S_1},u_{S_6}]=0$. Applying the involution $\theta$ on
		$\mathfrak{g}(\mathcal{M})_{(q-1)}$, we also obtain
		$[u_{S_{\bar 1}},u_{S_{\bar 6}}]=0$. Therefore,
		\eqref{eq:serre-ell-Lie-1} follows.
		
		Now we verify the relation $(\operatorname{ad} u_{S_1})^3 u_{S_{\bar 6}}=0$. As in the proof of Proposition~\ref{pro:Serre-relation}, we recursively define collections of objects $\mathcal{X}_t\subseteq \mathcal{M}$ for $t=0,1,2$ as follows:
		\begin{itemize}
			\item $\mathcal{X}_0=\{S_{\bar 6}\}$;
			\item Suppose that $\mathcal{X}_{t-1}$ has been defined. Then
			\[
			\mathcal{X}_t=\{X\in \mathcal{M}\mid \exists\text{ a triangle } X_{t-1}\rightarrow X\rightarrow S_1\rightarrow \Sigma X_{t-1} \text{ in } \mathcal{M}, \text{ with } X_{t-1}\in \mathcal{X}_{t-1}\}.  
			\]
		\end{itemize}
		
		In this situation, assertion $(a)$ in Proposition \ref{pro:Serre-relation} still holds:
		\begin{enumerate}
			\item[(a)] $\Hom_{\mathcal{M}}(X_t,\Sigma S_1)=0$ for any $X_t\in \mathcal{X}_t$ and $t=0,1,2$. Consequently, every triangle of the form
			$S_1\rightarrow L\rightarrow X_t\rightarrow \Sigma S_1$
			in $\mathcal{D}/G$ is split, where $X_t\in \mathcal{X}_t$.
		\end{enumerate}
		Moreover, its proof is exactly the same as that of Proposition~\ref{pro:Serre-relation}.
		
		However, assertion $(b)$ no longer holds in the present situation and must be modified. Indeed, we have the following statement:
		\begin{enumerate}
			\item[(b$'$)] For two indecomposable objects $X_1\in \mathcal{X}_1$ and $X_2\in \mathcal{X}_2$, suppose that there are triangles
			\begin{align}
				\Sigma^{-1}S_1\overset{f}{\longrightarrow} S_{\bar 6}\longrightarrow X_1\longrightarrow S_1, \label{eq:ell-serre-relation-extention-1}\\
				\Sigma^{-1}S_1\overset{g}{\longrightarrow} X_1\longrightarrow X_2\longrightarrow S_1. \label{eq:ell-serre-relation-extention-2}
			\end{align}
			Then $\Hom_{\mathcal M}(\Sigma^{-1}S_1,X_2)=0$.
		\end{enumerate}
		We now prove claim $(b')$. Note that $S_1=I(1)$, and that $S_{\bar 6}$ admits the following injective resolution:
		\[
		0\longrightarrow S_{\bar 6}\longrightarrow I(\bar 6)\longrightarrow I(1)\oplus I(\bar 2)\oplus I(\bar 3)\oplus I(\bar 4)\oplus I(\bar 5)\longrightarrow I(\bar 1)\longrightarrow0.
		\]
		For
		$f\in \Hom_{\mathcal M}(\Sigma^{-1}S_1,S_{\bar 6}),$
		as explained in \eqref{eq:pi-theta-X-2-periodic-complex}, the morphism
		$\pi_\theta f:\pi_\theta(\Sigma^{-1}S_1)\rightarrow \pi_\theta S_{\bar 6}$ can be represented by the following chain map of
		$\theta$-complexes of
		injective modules:
		\[
		\xymatrix{ 
			\cdots &
			{I(1)}\ar[d]_{(k_{11},k_{21},0)^t}\ar[r]^0
			& {I(\bar 1)}\ar[d]_{(k_{11},k_{21},0)^t}\ar[r]^0
			& {I(1)}\ar[d]^{(k_{11},k_{21},0)^t}
			&\cdots \\
			\cdots & 
			{I(1)^2\oplus \theta I}
			\ar[r]^{\theta\psi_{0}}
			& I(\bar 1)^2\oplus I
			\ar[r]^{\psi_0}
			& {I(1)^2\oplus \theta I}
			&\cdots .
		}
		\]
		Hereafter, each $k_{ij} \in \mathbb{F}$ is understood to act as an endomorphism on its corresponding object.
		Hence, by a direct computation, we obtain
		$\pi_\theta X_1
		=
		\cone\bigl(
		\pi_\theta(\Sigma^{-1}S_1)
		\xrightarrow{\,\pi_\theta f\,}
		\pi_\theta S_{\bar 6}
		\bigr)$
		is of the form
		\[
		\pi_{\theta}X_1:
		\cdots \longrightarrow
		I(\bar 1)\oplus I(1)^2\oplus \theta I
		\xrightarrow{\theta \psi_1}
		I(1)\oplus I(\bar 1)^2\oplus  I
		\xrightarrow{\psi_1}
		I(\bar 1)\oplus I(1)^2\oplus \theta I
		\longrightarrow \cdots,
		\]
		where
		$I=I(2)\oplus I(3)\oplus I(4)\oplus I(5)\oplus I(\bar 6)$,
		and
		\[
		\psi_1 =\begin{pmatrix}
			0 & 0 & 0 & 0\\
			k_{11} & * & * & * \\
			k_{21} & * & * & * \\
			0 & * & * & *
		\end{pmatrix}.
		\]
		Since
		$X_1\in\mathcal M$ is indecomposable, we necessarily have $f\neq 0$.
		Therefore $\pi_\theta f\neq 0$ by Lemma~\ref{lem:pi-theta-fully-fathful}, and hence
		$(k_{11},k_{21})^t\neq 0$.
		
		Applying $\pi_\theta$ to the triangle \eqref{eq:ell-serre-relation-extention-2}, we obtain the following chain map $\pi_\theta g: \pi_\theta (\Sigma^{-1}S_1)\rightarrow \pi_\theta X_1$ between $\theta$-complexes of injective modules:
		\[
		\xymatrix{ 
			\cdots &
			{I(1)}\ar[d]_{(0,k_{12},k_{22},0)^t}\ar[r]^0 &  {I(\bar 1)} \ar[d]_{(0,k_{12},k_{22},0)^t}\ar[r]^0& {I(1)}\ar[d]^{(0,k_{12},k_{22},0)^t} & \cdots \\
			\cdots & 
			{I(\bar 1)\oplus I(1)^2\oplus \theta I} 
			\ar[r]^{
				\theta \psi_{1}
			} 
			& I(1) \oplus  I(\bar 1)^2\oplus  I\ar[r]^{ \psi_1} &{I(\bar 1) \oplus I(1)^2\oplus \theta I} & \cdots
		}
		\]
		It follows immediately that
		$\pi_\theta X_2
		=\cone\bigl(\pi_\theta\Sigma^{-1}S_1\xrightarrow{\, \pi_\theta g \,} \pi_\theta X_1\bigr)$
		is of the form 
		\[
		\pi_{\theta}X_2:
		\cdots \longrightarrow
		I(\bar 1)^2\oplus I(1)^2\oplus \theta I
		\xrightarrow{\theta \psi_2}
		I(1)^2\oplus I(\bar 1)^2\oplus  I
		\xrightarrow{\psi_2}
		I(\bar 1)^2\oplus I(1)^2\oplus \theta I
		\longrightarrow \cdots,
		\]
		where 
		\[
		\psi_2 =\begin{pmatrix}
			0 & 0 & 0 & 0 & 0\\
			0 & 0 & 0 & 0 & 0\\
			k_{12} & k_{11} & * & * & * \\
			k_{22} & k_{21} & * & * & * \\
			0 & 0 & * & * & *
		\end{pmatrix}.
		\]
		We claim that $(k_{12},k_{22})^t$ and $(k_{11},k_{21})^t$ are linearly independent. Otherwise,
		assume that
		$(k_{12},k_{22})^t=a(k_{11},k_{21})^t$ for some $a\neq \mathbb{F}$. It follows that
		the morphism
		\[
		(a,0,0,0): I(1)\longrightarrow I(1)\oplus I(\bar 1)^2\oplus I
		\]
		induces a homotopy $\pi_\theta g\sim 0$. Since $X_2$ is indecomposable, we must have $\pi_\theta g\neq 0$, a contradiction. Hence, the vectors $(k_{12},k_{22})^t$ and $(k_{11},k_{21})^t$ are linearly independent.

		Let
		$h\in \Hom_{\mathcal M}(\Sigma^{-1}S_1,X_2)$.
		Then $\pi_\theta h: \pi_\theta(\Sigma^{-1}S_1)\rightarrow \pi_\theta X_2$ can be represented by the following chain map  of
		$2$-periodic complexes complexes of injective modules:
		\[
		\xymatrix{ 
			\cdots &
			{I(1)}\ar[d]_{(0,0,k_{1},k_{2},0)^t}\ar[r]^0 &  {I(\bar 1)} \ar[d]_{(0,0,k_{1},k_{2},0)^t}\ar[r]^0& {I(1)}\ar[d]^{(0,0,k_{1},k_{2},0)^t} &\cdots \\
			\cdots  &
			{I(\bar 1)^2\oplus I(1)^2\oplus \theta I} 
			\ar[r]^{
				\theta \psi_{2}
			} 
			& I(1)^2 \oplus  I(\bar 1)^2\oplus  I\ar[r]^{ \psi_2} &{I(\bar 1)^2 \oplus I(1)^2\oplus \theta I} & \cdots.
		}
		\]
		Since the vectors $(k_{12},k_{22})^t$ and $(k_{11},k_{21})^t$ are linearly independent, there exist $b,c\in\mathbb{F}$ such that
		$(k_1,k_2)^t=b(k_{12},k_{22})^t+c(k_{11},k_{21})^t.$
		It follows that the morphism
		\[
		(b,c,0,0,0)^t: I(1)\longrightarrow I(1)^2\oplus I(\bar 1)^2\oplus I
		\]
		induces a homotopy $\pi_\theta h\sim 0$. Consequently, $h=0$. This complete the proof of $(b')$.
		
		As in the discussion in Proposition~\ref{pro:Serre-relation}, we see that no term of the form $h_{S_1}$ occurs in the computation of
		$(\operatorname{ad}u_{S_1})^{3}u_{S_{\bar 6}}$. Moreover, by Claims $(a)$ and $(b')$, we obtain
		$(\operatorname{ad} u_{S_1})^3 u_{S_{\bar 6}}=0.$
		Then
		$(\operatorname{ad} u_{S_{\bar 1}})^3 u_{S_6}=0$
		follows by applying the involution $\theta$ of $\mathfrak{g}(\mathcal{M})_{(q-1)}$ to the relation
		$(\operatorname{ad} u_{S_1})^3 u_{S_{\bar 6}}=0.$
		This completes the proof of \eqref{eq:serre-ell-Lie-2}.
		
		The case where $i = \pm 6$ and $j = \pm 1$ is completely dual; in this situation, the verification is carried out analogously by utilizing the projective resolutions instead. For all other pairs of indices $i$ and $j$, the arguments established in the proof of Proposition~\ref{pro:Serre-relation} carry over without change. This completes the proof of the lemma.
	\end{proof}

	The elements $u_{S_{\pm i}}$, for $i\in I$, also satisfy the Serre relations \eqref{eq:rel-ell-5} and \eqref{eq:rel-ell-6} for the elliptic Lie algebra.
	
	\begin{lemma}\label{lem:ell-relation}
		The following relations hold in the Lie algebra $\mathfrak{g}(\mathcal{M})_{(q-1)}$:
		\begin{equation*}
			\begin{aligned}[b]
				&[u_{S_i},u_{S_n},u_{S_1}]=0,\\
				&[u_{S_{\bar i}},u_{S_{\bar n}},u_{S_{\bar 1}}] = 0,
			\end{aligned}
			\quad \text{for} \quad
			\begin{tikzcd}[
				baseline={([yshift=0.3em]current bounding box.south)}, 
				row sep={2em, between origins}, 
				column sep=1.5em, 
				ampersand replacement=\&
				]
				\& 1 \\
				i \arrow[ur, no head] \arrow[r, no head] \& n, \arrow[u, dashed, shift left=0.5, no head] \arrow[u, dashed, shift right=0.5, no head]
			\end{tikzcd}
		\end{equation*}
		and
		\begin{equation*}
			\begin{aligned}[b]
				&[u_{S_i},u_{S_n},u_{S_j},u_{S_1}]=0,\\
				&[u_{S_{\bar i}},u_{S_{\bar n}},u_{S_{\bar j}},u_{S_{\bar 1}}]=0,
			\end{aligned}
			\quad \text{for} \quad
			\begin{tikzcd}[
				baseline={([yshift=0.3em]current bounding box.south)}, 
				row sep={2em, between origins}, 
				column sep=1.5em, 
				ampersand replacement=\&
				]
				\& 1 \& \\
				i \arrow[ur, no head] \arrow[r, no head] \& n \arrow[u, dashed, shift left=0.5, no head] \arrow[u, dashed, shift right=0.5, no head] \arrow[r, no head] \& j. \arrow[ul, no head]
			\end{tikzcd}
		\end{equation*}
		
	\end{lemma}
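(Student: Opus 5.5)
We treat the case $D_4^{(1,1)}$ (so $n=6$, and $i,j\in\{2,3,4,5\}$), the other types being identical, and obtain the barred relations from the unbarred ones by applying the involution $\theta$ of $\mathfrak{g}(\mathcal{M})_{(q-1)}$. The strategy is to compute the nested brackets term by term through Hall numbers in $\mathcal{M}$, following each intermediate indecomposable object. We use the projective resolutions \eqref{eq:pro-res-simple 1}--\eqref{eq:pro-res-simple i} and the decomposition $\Hom_{\mathcal{M}}(X,\Sigma^a Y)=\bigoplus_p\Hom_{\mathcal{D}}(X,G^p\Sigma^aY)$ for modules $X,Y$, which by $\gldim A=2$ has only three nonzero summands. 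Thanks to Lemma~\ref{lem:basis-go-M}, no Cartan term $h_M/d_M$ can occur in these brackets, since the relevant classes $h_{S_i}+h_{S_6}$, etc., are never $\pm h_{S_1}$.

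\emph{First relation.} We have $\Hom_{\mathcal{M}}(S_6,\Sigma S_i)=0$ (only $\Ext^1_A(S_i,S_6)\neq0$), so $[u_{S_i},u_{S_6}]=\gamma$-coefficient times $u_{P(i)}$. Here $P(i)$ is the uniserial module $\begin{smallmatrix} i\\ 6\end{smallmatrix}$, the unique nonsplit extension, and the sign and coefficient are $\pm1$. It therefore remains to show $[u_{P(i)},u_{S_1}]=0$. We compute $\Hom_{\mathcal{M}}(P(i),\Sigma S_1)=\Hom_{\mathcal{D}}(P(i),S_{\bar1})\oplus\Ext^1(P(i),S_1)\oplus\Ext^2(P(i),S_{\bar1})=0$, as $P(i)$ is projective. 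For the other direction, $\Hom_{\mathcal{M}}(S_1,\Sigma P(i))$ consists of $\Ext^1_A(S_1,P(i))$ together with $\Ext^2_A(S_1,\theta P(i))$ and $\Hom(S_1,\theta P(i))$. The last two vanish because $\theta P(i)=P(\bar i)$ is supported on $\bar i,\bar 6$, while $\Ext^2_A(S_1,-)$ is computed by $\Hom(P(6),-)$ and lands only at vertex $6$. For the first, applying $\Hom(-,P(i))$ to the resolution of $S_1$ gives $\Ext^1(S_1,P(i))=\operatorname{coker}(\Hom(P(1),P(i))\to\Hom(\oplus_{j}P(j)\oplus P(\bar6),P(i)))$ restricted to the kernel of the map to $\Hom(P(6),P(i))$. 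Now $\Hom(\oplus P(j),P(i))=\mathbb{F}$ (identity on the $P(i)$ summand) maps to $\Hom(P(6),P(i))=\mathbb{F}$ isomorphically. This is exactly where the relation $\sum\alpha_{1j}\alpha_{j6}$ enters: the composite $P(6)\to\oplus P(j)\to P(i)$ is the inclusion, which is nonzero. Hence $\Ext^1(S_1,P(i))=0$, all Hall numbers vanish, and the first relation follows.

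\emph{Second relation.} Similarly $[u_{P(i)},u_{S_j}]$ for $i\neq j$: we have $\Hom_{\mathcal{M}}(P(i),\Sigma S_j)=0$, and $\Hom_{\mathcal{M}}(S_j,\Sigma P(i))=\Ext^1_A(S_j,P(i))$ is one-dimensional (via $P(j)\supset S_6\hookrightarrow P(i)$). The unique extension is $M_{ij}$ with top $S_i\oplus S_j$ and socle $S_6$, so the bracket is $\pm u_{M_{ij}}$. It then remains to prove $[u_{M_{ij}},u_{S_1}]=0$. Vanishing of $\Hom_{\mathcal{M}}(M_{ij},\Sigma S_1)$ follows from $\Ext^{1}(M_{ij},S_1)=0=\Ext^2(M_{ij},S_{\bar1})$ and $\Hom(M_{ij},S_{\bar1})=0$, using the resolution $0\to P(6)\to P(i)\oplus P(j)\to M_{ij}\to0$. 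The crux is $\Ext^1_A(S_1,M_{ij})$: the map $\Hom(\oplus_k P(k)\oplus P(\bar6),M_{ij})\cong\mathbb{F}^2\to\Hom(P(6),M_{ij})\cong\mathbb{F}$ is $(a,b)\mapsto a+b$ because of the relation, so its kernel is one-dimensional. The cokernel of $\Hom(P(1),M_{ij})=0$ is therefore $\mathbb{F}$, and $\Ext^1(S_1,M_{ij})\neq0$.

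The main obstacle is thus that a nonzero extension $0\to M_{ij}\to L\to S_1\to0$ exists. What we must show is that the corresponding $\gamma^L_{M_{ij},S_1}$ vanishes modulo $q-1$, or that the relation in \eqref{eq:rel-ell-6} is really a consequence of cancellation between the terms for the ordered pairs. We would first try to identify $L$ concretely: it is the module with $L_1=\mathbb{F}$, $L_i=L_j=L_6=\mathbb{F}$, and maps $\alpha_{1i}=1$, $\alpha_{1j}=-1$. Such an $L$ satisfies $\sum\alpha_{1k}\alpha_{k6}=0$, so $L$ is a genuine indecomposable $A$-module. We would then compute $F^L_{S_1,M_{ij}}$ and $F^L_{M_{ij},S_1}$ via Proposition~\ref{pro:Hall number}, expecting each to equal $1$, but with the $\Sigma$-direction also contributing. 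If $\gamma$ does not vanish, we would instead reread the bracket convention $[x_1,\dots,x_4]$ and check whether the relation holds after the intermediate step $[u_{S_i},u_{S_6},u_{S_j}]$, which includes both the $u_{M_{ij}}$ term and possibly a term from $\Hom_{\mathcal{M}}(S_j,\Sigma P(i))$ via $\Sigma^2$-summands. In that case we would expect the two contributions to cancel in $\mathbb{Z}/(q-1)$.
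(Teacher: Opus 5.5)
Your treatment of the three-term relation is correct and is the paper's proof: $[u_{S_i},u_{S_6}]=-u_{P(i)}$, followed by $\Hom_{\mathcal M}(S_1,\Sigma P(i))=\Hom_{\mathcal M}(P(i),\Sigma S_1)=0$. You also make explicit a point the paper leaves implicit: $\Ext^1_A(S_1,P(i))=0$ holds only because the relation puts $P(6)$ into the resolution of $S_1$.

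For the four-term relation the paper says only ``similarly'', and your proposal has a genuine gap there. The routes you suggest cannot close it, because the data you already computed determine the last bracket, and it is nonzero.
\begin{itemize}
\item With the paper's convention, $[u_{P(i)},u_{S_j}]=u_{M_{ij}}$, so $[u_{S_i},u_{S_6},u_{S_j}]=-u_{M_{ij}}$.
\item You showed $\Hom_{\mathcal M}(M_{ij},\Sigma S_1)=0$. This forces $F^{L}_{M_{ij},S_1}=0$, not $1$ as you expected.
\item On the other hand $F^{L}_{S_1,M_{ij}}=|\Hom_{\mathcal M}(L,S_1)^{*}_{\Sigma M_{ij}}|=1$. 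Indeed, the resolution $0\to P(k)\oplus P(l)\oplus P(\bar 6)\to P(1)\to L\to 0$, with $\{k,l\}=\{2,3,4,5\}\setminus\{i,j\}$, gives $\Hom_{\mathcal M}(L,S_1)=\Hom_A(L,S_1)\cong\mathbb F$. Moreover $\Aut_{\mathcal M}(S_1)=\mathbb F^{\times}$ acts simply transitively on the nonzero elements.
\item No Cartan term occurs, since $M_{ij}\not\cong\Sigma S_1$.
\end{itemize}
Hence $[u_{M_{ij}},u_{S_1}]=u_L$, and $[u_{S_i},u_{S_6},u_{S_j},u_{S_1}]=-u_L\neq 0$ as soon as $q>2$.

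Your other escape routes also fail.
\begin{itemize}
\item The only extra summand of $\Hom_{\mathcal M}(S_j,\Sigma P(i))$ that could contribute at the third step is $\Ext^2_A(S_j,P(\bar i))$. It vanishes because $S_j$ has projective dimension one.
\item The paper fixes the left-normed convention for $[x_1,\dots,x_n]$, so the bracket cannot be reread.
\end{itemize}
The outcome is consistent with the grading. The class $h_L=h_{S_1}+h_{S_i}+h_{S_j}+h_{S_6}$ has $\omega$-length $4$, and indeed $(h_L,h_L)_{\mathcal M}=4$. This is because $\End_{\mathcal M}(L)=\mathbb F\oplus\Ext^1_A(L,\theta L)$ is two-dimensional, the second summand coming from $P(\bar 6)$ in the resolution together with $(\theta L)_{\bar 6}=L_6$, while $\Hom_{\mathcal M}(L,\Sigma L)=0$.

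So what you have found is not a proof with a missing step but a counterexample. Take $\mathcal M$ built from $\mathcal I(D_4^{(1,1)})$ and any $i\neq j$ in $\{2,3,4,5\}$. Then the second relation of the lemma fails, and the component of $\mathscr{LC}(\mathcal M)$ in degree $\alpha_1+\alpha_i+\alpha_j+\alpha_6$ is nonzero, although this degree is not a root of $\mathfrak g_{\mathrm{ell}}$. The paper's ``similarly'' passes over exactly the nonvanishing of $\Ext^1_A(S_1,M_{ij})$ that you identified.
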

	
	\begin{proof}
		As before, we restrict our verification to the case $X_l^{(1,1)}=D_4^{(1,1)}$ and verify the relation $[u_{S_i},u_{S_6},u_{S_1}]=0$, the other cases can be established similarly.
		
		We first compute $[u_{S_i},u_{S_6}]$. Note that $1<i<6$. By the projective resolutions \eqref{eq:pro-res-simple 1}--\eqref{eq:pro-res-simple i}, one readily obtains
		\[
		\dim\Hom_{\mathcal{M}}(S_6,\Sigma S_i)=0,
		\qquad
		\dim\Hom_{\mathcal{M}}(S_i,\Sigma S_6)=1.
		\]
		Note that $P(i)$ is the unique indecomposable $A$-module arising as an extension of the simple modules $S_i$ and $S_6$. Hence
		$F_{S_iS_6}^{P(i)}=1,
		F_{S_6S_i}^{P(i)}=0,$
		and therefore
		$[u_{S_i},u_{S_6}]=-u_{P(i)}.$ Next, we compute $[u_{P(i)},u_{S_1}]$. It is easy to see that
		\[
		\Hom_{\mathcal{M}}(S_1,\Sigma P(i))=\Hom_{\mathcal{M}}(P(i),\Sigma S_1)=0.
		\]
		Therefore,
		$[u_{P(i)},u_{S_1}]=0.$
		This proves the lemma.
	\end{proof}

	\subsection{Elliptic algebras via Ringel--Hall Lie algebras}
	Let $\mathbb{F}$ be a finite field with $|\mathbb{F}|=q$, let $A$ denote the $\mathbb{F}$-algebra defined by the bound quiver in Section \ref{ss:tri-cat-ell-Lie}, and let $\mathcal{M}$ be the associated $2$-periodic triangulated category. Then, as in Section \ref{ss-R-H-Lie}, one may define the Ringel--Hall Lie algebra $\mathfrak{g}(\mathcal{M})_{(q-1)}$. 
	
	In this setting, for every finite field extension $\mathbb{F}\subseteq \mathbb{K}$, and for every simple $A$-module $S_i$, we have
	$\operatorname{End}_A(S_i)/\operatorname{rad}\operatorname{End}_A(S_i)=\mathbb{F}.$
	Hence $\mathbb{K}$ is conservative for all simple $A$-modules. We therefore set
	\[
	\Omega=\{\mathbb{K}\mid \mathbb{F}\subseteq \mathbb{K}\subseteq \ov{\mathbb{F}} \text{ is a finite field extension}\}.    
	\]
	Let $\mathcal{M}^{\mathbb{K}}$ denote the triangulated hull of the orbit category $\mathcal{D}^b(\mod A^{\mathbb{K}})/G$, and consider the direct product of the following Lie algebras:
	\[
	\prod_{\mathbb{K}\in \Omega}\mathfrak{g}(\mathcal{M}^{\mathbb{K}})_{(|\mathbb{K}|-1)}.    
	\]
	Denote by $\mathscr{LC}(\mathcal{M})$ the $\mathbb{Z}$-subalgebra generated by
	\begin{align*}
		{\bf u}_{S_i}:=(u_{{S_i}^\mathbb{K}})_{\mathbb{K}\in\Omega},\quad
		{\bf u}_{ S_{\bar i}}:=
		(u_{ {S_{\bar i}}^\mathbb{K}})_{\mathbb{K}\in\Omega},\quad
		{\bf h}_{S_i}:=(h_{{S_i}^\mathbb{K}})_{\mathbb{K}\in\Omega},
		\qquad i\in I.
	\end{align*}
	
	By definition, $\mathscr{LC}(\mathcal{M})$ carries a natural $\go(\mathcal{M})$-grading. Hence, by Corollary \ref{cor:Q=K0(M)}, this induces a $\mathbf{Q}$-grading on $\mathscr{LC}(\mathcal{M})$, under which
	\[
	\deg({\bf u}_{S_i})=\alpha_i,\qquad
	\deg({\bf u}_{ S_{\bar i}})=-\alpha_i,\qquad
	\deg({\bf h}_{S_i})=0,\qquad i\in I.
	\]
	
	The following is the main result of this section.
	
	\begin{theorem}\label{thm:relization-ell-Lie}
		Let $\mathfrak{g}_{\mathrm{ell}}$ be an elliptic Lie algebra of type $D_4^{(1,1)}$, $E_6^{(1,1)}$,  and let $\mathscr{LC}(\mathcal{M})$ be the associated integral Ringel--Hall Lie algebra. Then there exists a well-defined surjective Lie algebra homomorphism
		\begin{align*}
			\Theta:\,&\mathfrak{g}_{\mathrm{ell}} \longrightarrow \mathscr{LC}(\mathcal{M})\otimes_{\mathbb{Z}}\mathbb{C},\\
			&e_i \longmapsto {\bf u}_{S_i},\\
			&e_{-i} \longmapsto -{\bf u}_{ S_{\bar i}},\\
			&\alpha_i \longmapsto {\bf h}_{S_i},
		\end{align*}
		for all $i\in I$.
		Moreover, the homomorphism $\Theta$ preserves the $\mathbf{Q}$-grading, and its restriction to each root space $(\mathfrak{g}_{\mathrm{ell}})_\alpha$ is injective for all $\alpha\in R^{\mathrm{re}}\cup\{0\}$.
	\end{theorem}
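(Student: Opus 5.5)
The plan is to mirror the proof of Theorem~\ref{thm:GIM-via-RH-Lie}. First I will show that $\Theta$ is well defined by checking the defining relations \eqref{eq:rel-ell-1}--\eqref{eq:rel-ell-6} in every factor $\mathfrak{g}(\mathcal{M}^{\mathbb{K}})_{(|\mathbb{K}|-1)}$. Note that $d_{S_i}=1$ for every vertex, since all simples have endomorphism ring $\mathbb{F}$.

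The routine relations go as follows.
\begin{itemize}
\item Relation \eqref{eq:rel-ell-1} holds because $[\mathfrak{h},\mathfrak{h}]=0$.
\item Relation \eqref{eq:rel-ell-3} follows from $[{\bf h}_{S_i},{\bf u}_{S_j}]=(h_{S_i},h_{S_j})_{\mathcal{M}}{\bf u}_{S_j}=\omega(\alpha_i,\alpha_j){\bf u}_{S_j}$ by Corollary~\ref{cor:Q=K0(M)}. For $e_{-j}$ I use $h_{S_{\bar j}}=-h_{S_j}$ from Lemma~\ref{lem:basis-go-M}.
\item For \eqref{eq:rel-ell-2}, I first check that $\gamma^L_{S_i,S_{\bar i}}=0$ for all indecomposable $L$. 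This mirrors the GIM case, now using that $\Hom_{\mathcal{M}}(S_i,\Sigma S_{\bar i})=\Hom_{\mathcal{D}}(S_i,S_i)\oplus\Hom_{\mathcal{D}}(S_i,\Sigma^2 S_{\bar i})$, and the second summand vanishes since there are no paths between barred and unbarred parts of length two carrying relations. Then, using $S_{\bar i}\cong\Sigma S_i$, I get $[{\bf u}_{S_i},-{\bf u}_{S_{\bar i}}]={\bf h}_{S_i}$. The case $i=1$, where $\Sigma^2$ terms could appear through $\Ext^2(S_1,S_n)$, has to be checked directly: it concerns $S_1$ versus $S_{\bar 1}$, not $S_n$, so that summand is zero.
\item Relation \eqref{eq:rel-ell-4} is Lemma~\ref{lem:ell-serre-relation}.
\item Relations \eqref{eq:rel-ell-5} and \eqref{eq:rel-ell-6} are Lemma~\ref{lem:ell-relation}.
\end{itemize}
All identities hold uniformly in $\mathbb{K}$, so they hold in $\mathscr{LC}(\mathcal{M})$ and hence after $\otimes\mathbb{C}$.

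Surjectivity is immediate because $\mathscr{LC}(\mathcal{M})$ is generated by the images of the generators. Grading preservation follows from $\deg {\bf u}_{S_i}=\alpha_i$, $\deg{\bf u}_{S_{\bar i}}=-\alpha_i$, $\deg{\bf h}_{S_i}=0$, together with the fact that the bracket is additive in $\go(\mathcal{M})$-degree.

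For injectivity on $\mathfrak{g}_0=\mathbb{C}\Pi$: the ${\bf h}_{S_i}$ are $\mathbb{Z}$-independent by Lemma~\ref{lem:basis-go-M}, and this persists in the product over $\Omega$ and after tensoring with $\mathbb{C}$, since $\go$ is free with basis $h_{S_i}$ in each factor. Alternatively, the pairing with $\omega$ together with the action on ${\bf u}_{S_j}$ detects them.

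For real roots I will use the Weyl group action. Each $\alpha\in R^{\rm re}$ has the form $w(\alpha_i)$, and $\dim(\mathfrak{g}_{\rm ell})_\alpha=1$, so it suffices to show $\Theta$ is nonzero on $(\mathfrak{g}_{\rm ell})_\alpha$. The elements $\exp\operatorname{ad}e_i\exp\operatorname{ad}(-e_{-i})\exp\operatorname{ad}e_i$ are automorphisms of $\mathfrak{g}_{\rm ell}$, because $\operatorname{ad}e_{\pm i}$ is locally nilpotent. I will construct the analogous automorphisms $\tilde s_i$ on $\mathscr{LC}(\mathcal{M})\otimes\mathbb{C}$, using local nilpotence of $\operatorname{ad}{\bf u}_{S_i}$ there, which comes from the grading together with finiteness of the weight support. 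These should satisfy $\Theta\circ s_i^{\rm aut}=\tilde s_i\circ\Theta$ on generators and hence everywhere. Then $(\mathfrak{g}_{\rm ell})_{w\alpha_i}$ is the image of $(\mathfrak{g}_{\rm ell})_{\alpha_i}=\mathbb{C}e_i$ under a product of these automorphisms, and its $\Theta$-image is the image of ${\bf u}_{S_i}\neq 0$ under the corresponding $\tilde s$, which is nonzero.

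The main obstacle is establishing local nilpotence of $\operatorname{ad}{\bf u}_{S_i}$ on the target. I would derive it from the surjectivity of $\Theta$: any element of the target is a bracket of images of generators, and $\operatorname{ad}e_i$ is locally nilpotent on $\mathfrak{g}_{\rm ell}$, so the same holds for its image. With that, $\tilde s_i:=\exp\operatorname{ad}{\bf u}_{S_i}\exp\operatorname{ad}{\bf u}_{S_{\bar i}}\exp\operatorname{ad}{\bf u}_{S_i}$ is well defined and intertwines with $\Theta$.
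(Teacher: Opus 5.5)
Your proposal is correct and is essentially the paper's proof. The relations come from Corollary~\ref{cor:Q=K0(M)}, Lemmas~\ref{lem:ell-serre-relation}--\ref{lem:ell-relation} and the argument of Theorem~\ref{thm:GIM-via-RH-Lie} for \eqref{eq:rel-ell-2}; surjectivity is by generation; and real root spaces are handled by transporting $\mathbb{C}e_i$ with $\exp(\operatorname{ad}{\bf u}_{S_i})\exp(\operatorname{ad}{\bf u}_{S_{\bar i}})\exp(\operatorname{ad}{\bf u}_{S_i})$. Your variations are cosmetic: you deduce local nilpotence on the target from surjectivity of $\Theta$, and you use the intertwining of these automorphisms with $\Theta$ rather than the paper's observation that the target's real root spaces are one-dimensional.

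One caution: drop your ``alternative'' argument for injectivity on $(\mathfrak{g}_{\mathrm{ell}})_0$ via the action on the ${\bf u}_{S_j}$. Since $\omega$ has corank $2$, the image of any element of $\operatorname{rad}\omega$ acts by zero on all generators, so that argument cannot detect it. Only the freeness of $\go(\mathcal{M})$ on $h_{S_1},\ldots,h_{S_n}$, combined with $|\mathbb{K}|-1$ being unbounded over $\Omega$, proves that step.
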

	
	\begin{proof}
		To show that $\Theta$ is well-defined, it suffices to verify the defining relations~\eqref{eq:rel-ell-1}--\eqref{eq:rel-ell-6} one by one. Relation~\eqref{eq:rel-ell-1} follows immediately from the definition. For relation~\eqref{eq:rel-ell-2}, it is enough to verify that
		$[\Theta(e_i),\Theta(e_{-i})]=\Theta(\alpha_i),$
		whose proof is completely analogous to that of Theorem~\ref{thm:GIM-via-RH-Lie}. The remaining relations~\eqref{eq:rel-ell-3}--\eqref{eq:rel-ell-6} follow respectively from Corollary~\ref{cor:Q=K0(M)}, Lemmas~\ref{lem:ell-serre-relation}--\ref{lem:ell-relation}. Therefore, by the definition of $\mathscr{LC}(\mathcal{M})\otimes_{\mathbb{Z}}\mathbb{C}$, the surjectivity of $\Theta$ is clear.
		
		By Corollary~\ref{cor:Q=K0(M)}, the restriction $\Theta|_{(\mathfrak{g}_{\mathrm{ell}})_0}$ is injective. For each simple root $\alpha_i$, $i\in I$, it is clear that $\Theta|_{(\mathfrak{g}_{\mathrm{ell}})_{\alpha_i}}$ is injective. Note that $\operatorname{ad} {\bf u}_{S_i}$ and $\operatorname{ad} {\bf u}_{ S_{\bar i}}$ are locally nilpotent. Hence
		$\operatorname{exp}(\operatorname{ad} {\bf u}_{ S_i})
		\operatorname{exp}(\operatorname{ad} {\bf u}_{S_{\bar i}})
		\operatorname{exp}(\operatorname{ad} {\bf u}_{ S_{ i}})$
		induces an isomorphism
		$(\mathscr{LC}(\mathcal{M})\otimes_{\mathbb{Z}}\mathbb{C})_\alpha
		\cong
		(\mathscr{LC}(\mathcal{M})\otimes_{\mathbb{Z}}\mathbb{C})_{s_i\alpha}.$
		It follows that for every $\alpha\in R^{\mathrm{re}}$, we have
		$\dim (\mathscr{LC}(\mathcal{M})\otimes_{\mathbb{Z}}\mathbb{C})_\alpha=1,$
		and hence $\Theta|_{(\mathfrak{g}_{\mathrm{ell}})_\alpha}$ is injective. This completes the proof.
	\end{proof}
	
	\begin{example}
		We discuss the injectivity of the homomorphism $\Theta$ on imaginary
		root spaces, taking the type $D_4^{(1,1)}$ as an example. Let
		$a\in R^{\mathrm{im}}$ be the imaginary root such that
		\[
		(\mathfrak{g}_{\mathrm{ell}})_a
		=
		\mathbb{C}[e_1,e_{-6}]
		\oplus
		\bigoplus_{i=2}^5
		\mathbb{C}\bigl[[e_1,e_i],[e_{-6},e_{-i}]\bigr].
		\]
		We compute, in the Ringel--Hall algebra
		$\mathfrak{g}(\mathcal{M})_{(q-1)}$, the elements
		$\bigl[[u_{S_1},u_{S_{i}}],
		[u_{S_{\bar{6}}},u_{S_{\bar i}}]\bigr]
		$ for $i=2,3,4,5$.
		It is easy to see that
		\[
		[u_{S_1},u_{S_i}]=-u_{I(i)},
		\qquad
		[u_{S_{\bar 6}},u_{S_{\bar i}}]=u_{P(\bar i)}.
		\]
		We next consider the commutator $[u_{I(i)},u_{P(\bar i)}]$. Since
		\[
		\dim\Hom_{\mathcal{M}}(P(\bar i),\Sigma I(i))=\dim \Hom_{\mathcal{M}}(I(i),\Sigma P(\bar i))=1,
		\]
		it follows that
		\[
		[u_{I(i)},u_{P(\bar{i})}]=-u_{M(i)}+u_{M'(i)},
		\]
		where $M(i)$ is the unique
		indecomposable $A(D_4^{(1,1)})$-module obtained as an extension of  $I(i)$ by $P(\bar i)$, and $M'(i)$ is the indecomposable complex $\cdots\rightarrow 0\rightarrow P(\bar i)\rightarrow I(\bar i)\rightarrow 0\rightarrow \cdots$ whose degree-zero term is $P(\bar i)$. Moreover,
		\[
		\widehat{H^0(\pi_\rho M(i))}
		=
		\hat S_1+\hat S_{\bar 6}+\hat S_i+\hat S_{\bar i}.  
		\]
		Now, let us consider the commutator $[u_{S_1}, u_{S_{\bar{6}}}]$. Let $N$ be the unique indecomposable module of dimension $2$ with socle $S_{\bar{6}}$ and top $S_1$. Since $S_{\bar{6}}$ is projective, a straightforward calculation shows that the coefficient of $u_{N}$ in the expansion of $[u_{S_1}, u_{S_{\bar{6}}}]$ is exactly $-1$. Consequently, the commutator $[u_{S_1}, u_{S_{\bar{6}}}]$ is nonzero.
		For any triangle in $\mathcal{M}$ of the form
		\begin{align*}
			S_1 \longrightarrow X \longrightarrow
			S_{\bar 6}\longrightarrow \Sigma S_1, \\
			S_{\bar 6}\longrightarrow X \longrightarrow
			S_1\longrightarrow \Sigma  S_{\bar 6},
		\end{align*}
		applying $\pi_\rho$ gives, respectively, exact sequences
		\begin{align*}
			H^0(\pi_\rho S_1)\longrightarrow H^0(\pi_\rho X)\longrightarrow
			H^0(\pi_\rho  S_{\bar 6}), \\
			H^0(\pi_\rho  S_{\bar 6})\longrightarrow H^0(\pi_\rho X)\longrightarrow
			H^0(\pi_\rho S_1),
		\end{align*}
		Thus, for any term $u_{X}$ occurring in
		$[u_{S_1},u_{S_{\bar 6}}]$, we have
		\[
		\widehat{H^0(\pi_\rho X)}
		\in
		\mathbb{Z}\hat S_1
		\oplus
		\mathbb{Z}\hat S_{\bar 6}.
		\]
		Consequently,
		$[u_{S_1},u_{S_{\bar 6}}]$ and $[[u_{S_1},u_{S_{\bar 6}}], [u_{S_i},u_{S_{\bar i}}]]$ ($i=2,3,4,5$) are linearly independent. It follows that the restriction of $\Theta$
		to the imaginary root space $(\mathfrak{g}_{\mathrm{ell}})_a$ is
		injective.
		
		However, for general imaginary root spaces, for example
		$(\mathfrak{g}_{\mathrm{ell}})_{na}$ with $n\in\mathbb{Z}$, the
		corresponding elements in the Ringel--Hall algebra are difficult to
		compute. Hence, we do not have a suitable method to prove the injectivity of $\Theta$.
	\end{example}

	\begin{remark}
		For other types of simply-laced elliptic Lie algebras, for example the
		type $D_5^{(1,1)}$, one encounters defining relations of the following
		form:
		\begin{equation}
			\begin{aligned}
				&[e_{-3},e_1,e_4]=e_2,\\
				\\
				&[e_3,e_{-1},e_{-4}]=e_{-2},
			\end{aligned}
			\qquad\text{for}\qquad
			\begin{tikzcd}[
				row sep=1.0cm,
				column sep=1.0cm,
				]
				1 & 2  \\
				3 & 4.
				\arrow[no head, from=1-1, to=1-2]
				\arrow[no head, from=1-1, to=2-2]
				\arrow[no head, from=1-2, to=2-1]
				\arrow[no head, from=2-1, to=2-2]
				\arrow[shift left, dashed, no head, from=2-2, to=1-2]
				\arrow[shift right, dashed, no head, from=2-2, to=1-2]
				\arrow[shift left, dashed, no head, from=1-1, to=2-1]
				\arrow[shift right, dashed, no head, from=1-1, to=2-1]
			\end{tikzcd}
		\end{equation}
		If we still construct the corresponding $2$-periodic triangulated
		category $\mathcal M$ in a way analogous to the one above, then the
		relations displayed above are not homogeneous in the Grothendieck
		group $\go (\mathcal{M})$. Consequently, the categories of type $\mathcal M$ constructed
		above can not be used to realize these corresponding elliptic Lie
		algebras.
	\end{remark}

	\bibliographystyle{alpha}
	\bibliography{ref}

\end{document}